\newtheorem{theorem}{Theorem} [section]
\newtheorem{prop}[theorem]{Proposition}
\newtheorem{lemma}[theorem]{Lemma}
\newtheorem{cor}[theorem]{Corollary}
\newtheorem{conjecture}[theorem]{Conjecture}
\theoremstyle{definition}
\newtheorem{example}[theorem]{Example}
\newtheorem{remark}[theorem]{Remark}
\numberwithin{equation}{section}
\numberwithin{figure}{section}
\newcommand\C{{\mathbb C}}
\renewcommand\P{{\mathbb P}}
\newcommand\R{{\mathbb R}}
\newcommand\Q{{\mathbb Q}}
\newcommand\D{{\mathbb D}}
\renewcommand\phi{\varphi}
\newcommand\Gal{\operatorname{Gal}}
\newcommand\Aut{\operatorname{Aut}}
\newcommand\PSL{\mathrm{PSL}}
\newcommand\Qbar{\overline{\mathbb{Q}}}
\newcommand\Kbar{\overline{K}}
\newcommand\del{\partial} 
\renewcommand\>{\rangle} 
\newcommand\iso{\simeq} 
\newcommand\vol {\operatorname{vol}}
\newcommand\supp{\operatorname{supp}}   
\newcommand\Rat  {\mathrm{Rat}} 
\newcommand\M {\mathrm{M}}
\newcommand\Per {\mathrm{Per}}
\newcommand\Preper {\mathrm{Preper}}
\newcommand{\cL}{{\mathcal L}}
\definecolor{myblue}{rgb}{0.6, 0.9, 1}
\definecolor{mygreen}{rgb}{0,0.8, 0.2}
\definecolor{purple}{rgb}{0.6,0.2,1}
\definecolor{orange}{rgb}{0.8,0,0.2}
\begin{document}

\title{Dynamics on $\P^1$:  preperiodic points and pairwise stability}

\author{Laura DeMarco and Myrto Mavraki}
\email{demarco@math.harvard.edu}
\email{mavraki@math.harvard.edu}

\date{\today}

\begin{abstract}
In \cite{DKY:quad}, it was conjectured that there is a uniform bound $B$, depending only on the degree $d$, so that any pair of holomorphic maps $f, g :\P^1\to\P^1$ with degree $d$ will either share all of their preperiodic points or have at most $B$ in common.  Here we show that this uniform bound holds for a Zariski open and dense set in the space of all pairs, $\Rat_d \times \Rat_d$, for each degree $d\geq 2$.  The proof involves a combination of arithmetic intersection theory and complex-dynamical results, especially as developed recently by Gauthier-Vigny \cite{Gauthier:Vigny}, Yuan-Zhang \cite{Yuan:Zhang:quasiprojective}, and Mavraki-Schmidt \cite{Mavraki:Schmidt}.  In addition, we present alternate proofs of the main results of DeMarco-Krieger-Ye in \cite{DKY:UMM, DKY:quad} and of Poineau in \cite{Poineau:BFT}. In fact we prove a generalization of a conjecture of Bogomolov--Fu--Tschinkel  \cite{BFT:torsion}  in a mixed setting of dynamical systems and elliptic curves. 
\end{abstract}



\maketitle

\thispagestyle{empty}

\bigskip
\section{Introduction}

Fix an integer $d \geq 2$.  Let $\P^1$ denote the complex projective line, and let $\Rat_d$ denote the space of all holomorphic maps $f: \P^1 \to \P^1$ of degree $d$.  Parameterizing these maps by their coefficients, throughout this article, we identify $\Rat_d$ with a Zariski-open subset of the complex projective space $\P^{2d+1}$.  For $f \in \Rat_d(\C)$, let $\Preper(f)$ denote its set of preperiodic points in $\P^1(\C)$.

Given any pair $f, g: \P^1 \to \P^1$ of degrees $>1$, it is known that either $\Preper(f)\cap\Preper(g)$ is a finite set or $\Preper(f) = \Preper(g)$ and the two maps have the same measure of maximal entropy \cite{BD:preperiodic, Yuan:Zhang:II}.  Moreover, if we assume that $f$ and $g$ are non-exceptional, then equality of their preperiodic points holds if and only if the maps satisfy a strong compositional relation \cite{Levin:Przytycki}.   Further background is provided in Section \ref{background}.  

In this article, we show there exists a uniform bound on the size of $\Preper(f)\cap\Preper(g)$ for general pairs $(f,g)$ of any given degree $d\geq 2$, addressing a conjecture of \cite{DKY:quad}. 
The conjecture of \cite{DKY:quad} was in part motivated by a conjecture of Bogomolov--Fu--Tschinkel in the setting of Latt\`es maps, that is, the maps arising as quotients of endomorphisms on elliptic curves \cite{BFT:torsion}. We prove a generalization of their conjecture where $f$ is  a Latt\`es map and $g$ is arbitrary. 

The proofs involve a combination of techniques from arithmetic geometry and complex dynamics.  As much as possible, we mimic the arguments for 1-parameter families of pairs $(f,g)$ acting on $\P^1\times \P^1$ of \cite{Mavraki:Schmidt}.  The new ingredients here involve the passage from one-parameter families to higher-dimensional parameter spaces:  we rely on the recent equidistribution results of Yuan and Zhang \cite{Yuan:Zhang:quasiprojective}, and we take inspiration from the analysis of bifurcation currents in \cite{Bassanelli:Berteloot}, \cite{Buff:Epstein:PCF}, \cite{Berteloot:Bianchi:Dupont}, and especially the recent work of Gauthier and Vigny \cite{Gauthier:Vigny}.

\subsection{Summary of main results}
The first main goal of this article is to prove the existence of a uniform bound on the size of the intersection $\Preper(f)\cap\Preper(g)$, for a general pair $f, g: \P^1 \to \P^1$ in each degree:

\begin{theorem}  \label{general bound}
For each degree $d \geq 2$, there is a proper closed algebraic subvariety $V_d$ of $\Rat_d\times\Rat_d$ defined over $\Qbar$ and a constant $B_d$ so that 
	$$|\Preper(f) \cap \Preper(g)| \leq B_d$$
for all pairs $(f,g) \in (\Rat_d\times\Rat_d \setminus V_d)(\C)$.   
\end{theorem}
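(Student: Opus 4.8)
The plan is to reduce the problem to an arithmetic-equidistribution statement about an adelically metrized line bundle on a suitable parameter space, and then to extract the uniform bound from a *rigidity dichotomy*: either a certain height/intersection quantity forces the pair $(f,g)$ to lie on a proper subvariety $V_d$, or the two maps share all preperiodic points. First I would set up the "universal" family. Over the base $\Rat_d\times\Rat_d$ (or a finite cover of it, to trivialize the marked critical points) consider the two projections $\pi_1,\pi_2$ and the maps $F:=f\circ\pi_1$ and $G:=g\circ\pi_2$ acting fiberwise on the two factors of $\P^1\times\P^1$. Each of $f$ and $g$ carries a canonical (dynamical) height $\hhat_f,\hhat_g$ on $\P^1$, varying over the parameter space; by the work of Yuan--Zhang \cite{Yuan:Zhang:quasiprojective} and Gauthier--Vigny \cite{Gauthier:Vigny} these assemble into semipositive adelically metrized line bundles $\cL_f,\cL_g$ on the quasiprojective base, with associated bifurcation currents $T_f,T_g$ (in the spirit of \cite{Bassanelli:Berteloot}, \cite{Berteloot:Bianchi:Dupont}). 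A preperiodic point common to $f$ and $g$ at a parameter $t$ corresponds to a point of height zero for *both* $\cL_f$ and $\cL_g$; if the intersection $\Preper(f)\cap\Preper(g)$ were infinite along a subvariety, the arithmetic equidistribution theorem would force the two bifurcation measures to coincide there, i.e. $\mubif$ for $f$ equals that for $g$, which is exactly the kind of relation that (via \cite{Levin:Przytycki} in the non-exceptional case, and a separate analysis of the exceptional strata) pins the pair down to $\Preper(f)=\Preper(g)$ or to a proper algebraic locus.

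Next I would carry out the counting step on the complement of the bad locus. Fix a pair $(f,g)$ outside $V_d$. On $\P^1$ (over $\Qbar$, enlarging the ground field) we have two adelic metrics with the same underlying line bundle $\O(1)$, giving heights $\hhat_f$ and $\hhat_g$; their difference has bounded local contributions, and a common preperiodic point is a point $P$ with $\hhat_f(P)=\hhat_g(P)=0$. The key is an *effective equidistribution / height-gap* estimate: on the product metrized line bundle the arithmetic intersection number (the "arithmetic degree" in the sense of Yuan--Zhang) is a strictly positive number $c(f,g)$ whenever the two dynamics are genuinely different, and a set of $N$ common preperiodic points contributes, via Zhang's inequality on successive minima, a lower bound for the essential minimum that is incompatible with all those points having height zero once $N$ exceeds some bound $B_d$ depending only on $d$ and on a uniform lower bound for $c(f,g)$ over the complement of $V_d$. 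Here the uniformity of $B_d$ is what is new relative to \cite{DKY:quad}; it comes from the fact that $c(f,g)$, viewed as a function on the parameter space, is (semi)continuous and bounded below away from the vanishing locus $V_d$, which is precisely the algebraic subvariety we take — so $V_d$ is defined over $\Qbar$ as in the statement, being cut out by the degeneration of an arithmetically-defined intersection class.

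The main obstacle, as I see it, is making the passage from one-parameter families (where \cite{Mavraki:Schmidt} gives everything) to the full parameter space $\Rat_d\times\Rat_d$: one must show that the relevant intersection/height current does not degenerate on a Zariski-dense set, i.e. that the locus where $c(f,g)=0$ is a *proper* closed subvariety and not something dense. This requires knowing that there exists at least one pair $(f,g)$ of degree $d$ for which the dynamical measures are different and the common preperiodic points are finite with controlled count — which one can produce by hand (e.g. a generic $f$ together with a generic Möbius twist, or by specializing to families already analyzed) — together with the semicontinuity of the bifurcation-theoretic quantities from \cite{Gauthier:Vigny} to propagate this genericity. A secondary technical point is handling the exceptional maps (power maps, Chebyshev, Lattès): these form a proper subvariety and can simply be absorbed into $V_d$, but one should check that the generalization involving Lattès $f$ and arbitrary $g$ — the Bogomolov--Fu--Tschinkel-type statement advertised in the abstract — is consistent with this, i.e. that outside a further proper subvariety a Lattès $f$ and a general $g$ share only boundedly many preperiodic points, which again follows from the positivity of the mixed arithmetic intersection number once the elliptic-curve side is fed into the Yuan--Zhang machinery.
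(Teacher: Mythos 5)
Your high-level architecture matches the paper's: reduce to the Yuan--Zhang equidistribution theorem for a non-degenerate family, deduce equality of the maximal-entropy measures on the support of the relevant bifurcation measure, and invoke Levin--Przytycki (plus a separate analysis of exceptional maps) to conclude $\Preper(f)=\Preper(g)$ there, absorbing the bad locus into $V_d$. But there is a genuine gap at exactly the point you flag as ``the main obstacle'': establishing non-degeneracy over the full space $\Rat_d\times\Rat_d$. Non-degeneracy here means positivity of the \emph{top} wedge power $\mu_\Delta=\hat{T}_\Delta^{\wedge(4d-1)}$ of the pairwise-bifurcation current on the $(4d-1)$-dimensional moduli space of pairs (equivalently, non-degeneracy of the diagonal in the $(4d-1)$-st fiber power). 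Exhibiting one pair with $\mu_f\neq\mu_g$ and finitely many common preperiodic points does not touch this: a generic pair or a generic M\"obius twist gives no control on a top-degree wedge product of currents, and ``semicontinuity'' cannot propagate positivity of a measure that has not been shown to be nonzero anywhere. What the paper actually does is construct a \emph{rigid $(4d-1)$-repeller}: a tuple of $4d-1$ common preperiodic points at the special pair $(z^d,\zeta z^d)$, with $\zeta$ a \emph{primitive} $(d+1)$-th root of unity, which cannot be deformed as a tuple of common preperiodic points in any direction of the moduli space. This rests on the rigidity statement of Theorem \ref{rigidity of special pair}, proved by an explicit rank computation (Lemma \ref{linear algebra}) showing that the derivative of $(f,g)\mapsto f^2-g^2$ has maximal rank $4d-1$ at this pair; the primitivity of $\zeta$ is essential, since for $\zeta$ of order $\le d$ the conclusion is false. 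The transversality argument of Proposition \ref{transverse} (in the spirit of Buff--Epstein and Berteloot--Bianchi--Dupont) then places this pair in the support of $\mu_\Delta$. Your proposal contains no substitute for this construction.

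A secondary divergence: your counting step via Zhang's successive minima and a uniform lower bound for the arithmetic degree $c(f,g)$ on the complement of $V_d$ is not how the paper proceeds, and as stated it is shaky --- semicontinuity does not yield a uniform positive lower bound on a non-compact quasiprojective base. The paper instead derives the bound qualitatively: Theorem \ref{rbc} shows that common preperiodic $(m+1)$-tuples are not Zariski dense in $S\times\Delta^{m+1}$ (with $m=\dim S$), and a B\'ezout-type argument on fiber powers (Lemma \ref{Bezout}) converts this non-density into a bound $M$ valid off a proper closed subvariety, first over $\Qbar$ and then over $\C$ by specialization. Your essential-minimum idea is closer to the DKY strategy, which required family-specific height computations precisely because of this uniformity issue.
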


Conjecture 1.4 of \cite{DKY:quad} posited that, for each degree $d \geq 2$, there should exist a constant $B_d$ so that every pair $f, g \in \Rat_d$ has either $\Preper(f) = \Preper(g)$ or $|\Preper(f) \cap \Preper(g)| \leq B_d$.  The proof of Theorem \ref{general bound} does not provide an explicit description of the subvariety $V_d$, and so it fails to provide a complete proof of this conjecture in any degree $d$.  However, our approach to Theorem \ref{general bound} provides an alternate proof of the main results of \cite{DKY:quad} and \cite{DKY:UMM}, stated as Theorems \ref{quad} and \ref{Lattes} below.   We also prove the following generalization of the Bogomolov--Fu-Tschinkel conjecture \cite{BFT:torsion}: 

\begin{theorem} \label{Lattes theorem intro}
For each $d\geq 2$, there exists a uniform bound $M_{d}$ so that for each elliptic curve $E$ over $\C$ and each $f\in \Rat_d(\C)$ we have that either
	$$|\Preper(f) \cap x(E_{\mathrm{tors}})| \leq M_d \quad\mbox{or} \quad \Preper(f) = x(E_{\mathrm{tors}}),$$
where $x(E_{\mathrm{tors}})$ denotes the $x$-coordinates of the torsion points of a Weierstrass model of $E$. 
\end{theorem}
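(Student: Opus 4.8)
The plan is to run, over a parameter space that carries both $\Rat_d$ and the one‑parameter family of Latt\`es maps, the arithmetic machinery used for one‑parameter families of pairs in \cite{Mavraki:Schmidt}, and then to exploit the rigidity of the Latt\`es family to analyze the degenerate locus — the part which, in the analogous argument for Theorem~\ref{general bound}, is responsible for the exceptional set $V_d$. Writing $L_E\in\Rat_4$ for the Latt\`es map induced by $[2]$ on a short Weierstrass model of $E$, we have $x(E_{\mathrm{tors}})=\Preper(L_E)$, and by \cite{BD:preperiodic},\cite{Yuan:Zhang:II} the set $\Preper(f)\cap\Preper(L_E)$ is finite unless it equals $\Preper(f)=\Preper(L_E)$; since torsion points of elliptic curves are algebraic over the base field, a spreading‑out and specialization argument (as in \cite{DKY:UMM}) reduces us to bounding $|\Preper(f)\cap\Preper(L_E)|$ for $f$ and $E$ defined over $\Qbar$, in the finite case.

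\emph{Adelic line bundles over the combined family.} Let $\mathcal{W}=\{(a,b):4a^3+27b^2\neq 0\}$, carrying the universal elliptic curve and the universal degree‑$4$ Latt\`es map; set $\mathcal{X}:=\Rat_d\times\mathcal{W}$ and $\mathcal{X}':=\mathcal{X}\times\P^1$, with projection $\pi:\mathcal{X}'\to\mathcal{X}$. I would construct, by the Tate--Call--Silverman limiting procedure, nef $\Qbar$‑adelic line bundles $\overline L_1$ and $\overline L_2$ on $\mathcal{X}'$ whose heights at $(f,a,b,t)$ equal the canonical height $\hat h_f(t)$ and the N\'eron--Tate height $\hat h_{E_{a,b}}(P)$ of a point $P$ above $t$, respectively; the adelic semipositivity of these metrics is exactly the input furnished by \cite{Gauthier:Vigny} and by the quasi‑projective formalism of \cite{Yuan:Zhang:quasiprojective}, and over $\C$ the curvature currents of $\overline L_1$ are the bifurcation currents of \cite{Bassanelli:Berteloot},\cite{Berteloot:Bianchi:Dupont}. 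Set $\overline M:=\overline L_1+\overline L_2$; then $h_{\overline M}\geq 0$ and $h_{\overline M}(f,a,b,t)=0$ exactly when $t\in\Preper(f)\cap x(E_{a,b,\mathrm{tors}})$, so the common preperiodic points of a pair $(f,E)$ are precisely the $\overline M$‑height‑zero points on the fiber $\pi^{-1}(f,E)\cong\P^1$.

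\emph{The bound on non‑special fibers.} Following \cite{Mavraki:Schmidt}, but with the one‑dimensional height equidistribution replaced by the quasi‑projective version of Yuan--Zhang \cite{Yuan:Zhang:quasiprojective} and with the positivity analysis modelled on the bifurcation‑current estimates of \cite{Gauthier:Vigny}, I would establish a height inequality of the following shape: there is a proper Zariski‑closed subset $\Gamma\subsetneq\mathcal{X}'$ — the degenerate locus, where the equidistribution data attached to $\overline L_1$ and to $\overline L_2$ degenerate against one another — such that for every irreducible subvariety $W\subseteq\mathcal{X}'$ with $W\not\subseteq\Gamma$, the $\overline M$‑height‑zero points of $W$ are not Zariski dense, and in fact their number is bounded in terms of $\deg W$ and the arithmetic degree of $\overline M$. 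Applied to the fibers $W=\pi^{-1}(f,E)\cong\P^1$, which have degree $1$, this produces a constant $M_d$ depending only on $d$ with $|\Preper(f)\cap x(E_{\mathrm{tors}})|\leq M_d$ for every pair $(f,E)$ whose fiber is not contained in $\Gamma$.

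\emph{The main obstacle: the special pairs.} It remains to handle the pairs $(f,E)$ with $\pi^{-1}(f,E)\subseteq\Gamma$, which form a proper closed subset $\mathcal{X}_{\mathrm{sp}}\subseteq\mathcal{X}$. For a general pair of maps this is the end of the road, and it is precisely what leaves an unanalyzed exceptional set in Theorem~\ref{general bound}; the decisive new input here is that the second map ranges over the rigid family of Latt\`es maps — a one‑moduli‑dimensional family of postcritically finite maps, governed by the exceptionally well‑behaved N\'eron--Tate heights, and whose fiberwise bifurcation measure is the canonical (Betti‑type) form — so that $\mathcal{X}_{\mathrm{sp}}$ can be described exhaustively. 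The key claim, and the step I expect to be the main difficulty, is that membership in $\mathcal{X}_{\mathrm{sp}}$ forces $\Preper(f)=x(E_{\mathrm{tors}})$: one must show that the degeneracy of $\overline M$ along $\pi^{-1}(f,E)$, read through the canonical shape of the restriction of $\overline L_2$ for a Latt\`es map, implies that $f$ and $L_E$ carry the same canonical measure on $\P^1$, and then — via a rigidity statement for the mixed dynamical/elliptic pair, in the spirit of \cite{Levin:Przytycki} — that their preperiodic sets coincide. Carrying this rigidity out for the mixed pair, and verifying it for the given $(f,E)$ rather than merely generically along $\mathcal{X}_{\mathrm{sp}}$, completes the dichotomy: every pair $(f,E)$ either satisfies the uniform bound $M_d$ or lies in $\mathcal{X}_{\mathrm{sp}}$ and hence has $\Preper(f)=x(E_{\mathrm{tors}})$, which is the assertion.
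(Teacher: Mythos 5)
Your overall blueprint (arithmetic equidistribution over the combined family, then a separate analysis of the degenerate locus exploiting the rigidity of the Latt\`es side) matches the paper's strategy in outline, but there is a genuine gap at exactly the step you flag as the main difficulty, and the way you propose to close it would not work. You assert that membership of a fiber $\pi^{-1}(f,E)$ in the degenerate locus $\Gamma$ forces $\Preper(f)=x(E_{\mathrm{tors}})$, to be established by a Levin--Przytycki-type rigidity for the individual pair. This is not the right dichotomy: the first-stage degenerate locus is merely some proper Zariski-closed subset of $\cL\times\Rat_d$, and it will in general contain positive-dimensional families of pairs with \emph{finitely many} (possibly large numbers of) common preperiodic points; degeneracy of the family restricted to a component of $\Gamma$ says nothing about an individual fiber, and the equidistribution argument only yields $\mu_{f_s}=\mu_{g_s}$ for almost every $s$ with respect to a pairwise-bifurcation measure that may vanish identically on that component. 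What is actually needed --- and what the paper proves as Proposition \ref{nd prop} --- is that \emph{every} irreducible subvariety $S$ of the parameter space either carries a rigid $(\dim S)$-repeller (hence is non-degenerate, so the equidistribution machinery applies again and cuts the exceptional locus down by one dimension) or has $\Preper(f_s)=\Preper(g_s)$ identically along $S$; the theorem then follows by induction on the dimension of the exceptional locus. Producing rigid repellers on arbitrary subvarieties is where the Latt\`es structure genuinely enters (all periodic points repelling, $J=\P^1$, McMullen's classification of stable algebraic families, and the marked-point stability theorem of \cite{D:stableheight}); it is not a measure-rigidity statement in the spirit of \cite{Levin:Przytycki}, which is already packaged as Theorem \ref{LPYZ} and is used only at the end to convert equality of measures into equality of preperiodic sets.

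Two further points. First, the ``height inequality'' you invoke, bounding the number of $\overline M$-height-zero points on $W$ in terms of $\deg W$, is not supplied by the cited references in this dynamical setting; the paper instead derives uniformity from equidistribution on fiber powers (Theorem \ref{rbc}), a Zariski non-density statement, and the Bezout-type Lemma \ref{Bezout}, followed by a specialization argument to pass from $\Qbar$-parameters to $\C$-parameters. Second, you pair a degree-$4$ Latt\`es map with a degree-$d$ map without addressing the mismatch of polarization degrees; the paper replaces the pair by $(f_{d,t},g^2)$, the quotient of multiplication by $d$ on $E_t$ together with the second iterate of $g$, so that both maps have degree $d^2$ and the pairwise-bifurcation formalism applies.
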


The latter case, where $\Preper(f) = x(E_{\mathrm{tors}})$, will only hold if $f$ is a Latt\`es map arising as the quotient of a map on the same elliptic curve $E$; see \S\ref{preperiodic theorem} for background on the preperiodic points of Latt\`es maps.  The conjecture of Bogomolov--Fu-Tschinkel is the case of Theorem \ref{Lattes theorem intro} where $f$ is assumed to be a Latt\`es map, and it is stated below as Corollary \ref{Bft:cor}; the conjecture has recently been established by Poineau \cite{Poineau:BFT} and also follows by K\"uhne's work \cite{Kuhne:RBC} and a result of Gao-Ge-K\"uhne \cite{Gao:Ge:Kuhne}.  We also include a proof here to illustrate the differences in approach.

Easy examples (or an interpolation argument) show that there are maps $f$ and $g$ in every degree $d\geq 2$ so that 
	$$2d < |\Preper(f) \cap \Preper(g)| < \infty.$$ 
The question remains of how large this intersection can be (when finite), if we bound the degree.  Doyle and Hyde have shown there exist pairs $(f,g)$ of degree $d$ with 
	$$d^2 + 4d < |\Preper(f) \cap \Preper(g)| < \infty$$ 
for every $d\geq 2$ \cite{Doyle:Hyde}.  Corollary \ref{density cor}, stated below, shows that the general bound $B_d$ of Theorem \ref{general bound} must be at least $4d-1$.  In fact, we believe the following may hold:  

\begin{conjecture}
We can take $B_d=4d-1$ in Theorem \ref{general bound}. 
\end{conjecture}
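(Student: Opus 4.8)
The lower bound is already available: Corollary~\ref{density cor} provides a Zariski-dense family of pairs $(f,g)$, each with a \emph{finite} set of at least $4d-1$ common preperiodic points, so no admissible subvariety $V_d$ can come equipped with a constant $B_d<4d-1$. Hence the conjecture is equivalent to the assertion that the locus
\[
Z_{\geq 4d}\;:=\;\bigl\{(f,g)\in \Rat_d\times\Rat_d\;:\;|\Preper(f)\cap\Preper(g)|\geq 4d\bigr\}
\]
is \emph{not} Zariski dense in $\Rat_d\times\Rat_d$. Indeed, $Z_{\geq 4d}$ is a countable union of constructible sets, obtained by projecting to $\Rat_d\times\Rat_d$, for each choice of $4d$ preperiodic types, the incidence locus in $(\P^1)^{4d}\times\Rat_d\times\Rat_d$ of tuples of distinct points that are simultaneously preperiodic of those types for $f$ and for $g$; these are all defined over $\Q$. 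So its Zariski closure is a countable union of $\Q$-subvarieties, and if that closure is proper it is, by Noetherianity, a finite such union, in particular defined over $\Qbar$. Adjoining it to the subvariety $V_d$ of Theorem~\ref{general bound} — which already contains the locus where $\Preper(f)=\Preper(g)$, since there the intersection is infinite — then yields the statement with $B_d=4d-1$. Thus the whole content is: \emph{pairs sharing at least $4d$ preperiodic points are not Zariski dense.}

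The plan is to re-run the argument behind Theorem~\ref{general bound}, but now to extract from it a nontrivial algebraic relation the moment the common preperiodic set has size $\geq 4d$. Testing Zariski density on $\Qbar$-points, the common preperiodic points of $f$ and $g$ are exactly the $a\in\P^1(\Qbar)$ with $\widehat{h}_f(a)+\widehat{h}_g(a)=0$; equivalently, viewing $F=(f,g)$ on $\P^1\times\P^1$ with the adelic metrized line bundle $\overline{L}=\mathrm{pr}_1^\ast\overline{L}_f+\mathrm{pr}_2^\ast\overline{L}_g$, they are the $\overline{L}$-preperiodic points lying on the diagonal $\Delta$. When this set is infinite, equidistribution \cite{Yuan:Zhang:quasiprojective} together with the rigidity of \cite{Levin:Przytycki} forces $\Preper(f)=\Preper(g)$ — the known dichotomy. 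I would try to show that if instead it is finite of size $\geq 4d$, then $\Delta$ is forced into a degenerate position (for instance $f$ and $g$ share an iterate up to conjugacy, or a common invariant structure on $\P^1\times\P^1$), which is a proper algebraic condition on $(f,g)$. The natural tool is an arithmetic‑Bézout / successive‑minima estimate on $\Delta$ relative to $\overline{L}$: when finite, the number of $\overline{L}$-preperiodic points on $\Delta$ should be bounded by an intersection quantity built from the arithmetic degrees of $\overline{L}|_\Delta$ and the essential minimum of $(\widehat{h}_f+\widehat{h}_g)|_\Delta$, and one wants that quantity to exceed $4d-1$ only when $(f,g)$ is degenerate. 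In parallel, I would classify the top-dimensional components of the closure of $\{(f,g):|\Preper(f)\cap\Preper(g)|\geq 4d-1\}$ and verify that they are exactly the family of Corollary~\ref{density cor}, together with its symmetries and the $\Preper(f)=\Preper(g)$ locus; matching these two inputs is what would pin the threshold precisely at $4d-1$.

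The main obstacle is effectivity, and specifically locating the threshold at exactly $4d-1$ rather than at some larger explicit constant. The proof of Theorem~\ref{general bound} is soft: it couples the dichotomy with a Noetherian/compactness argument and produces neither an explicit $B_d$ nor an explicit $V_d$, because the equidistribution input is used only qualitatively and the local (archimedean and non-archimedean) heights of a pair $(f,g)$ are not controlled uniformly in $d$. Upgrading to the sharp bound appears to require one of: (i) a genuinely quantitative equidistribution / arithmetic-intersection statement on $\P^1\times\P^1$ with constants depending only on $d$, which in turn seems to demand uniform control of the bifurcation currents of the universal family in the spirit of Gauthier--Vigny \cite{Gauthier:Vigny}, sharp enough to bound the number of height-zero points on $\Delta$; or (ii) a new ``unlikely intersections'' rigidity theorem — a finite-level analogue of \cite{Levin:Przytycki} — asserting that $4d$ common preperiodic points already force a compositional-type relation between $f$ and $g$. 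Either route looks substantial, and in either case the argument must be genuinely dynamical rather than a dimension count: the Doyle--Hyde pairs \cite{Doyle:Hyde}, which realize more than $d^2+4d$ common preperiodic points, show that any correct classification must separate these sparse high-count configurations from the dense ones.
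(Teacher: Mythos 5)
The statement you are addressing is posed in the paper as an open \emph{conjecture}; the paper offers no proof of it, and neither does your proposal. What you have written is a correct framing of the problem together with a research plan, not an argument. Your reduction is sound in outline: the lower bound $B_d\geq 4d-1$ does follow from Corollary~\ref{density cor} exactly as the paper itself observes, and the conjecture is indeed equivalent to the non-Zariski-density of the locus $Z_{\geq 4d}$ of pairs with at least $4d$ common preperiodic points (modulo a small slip: Noetherianity gives the \emph{descending} chain condition on closed sets, so you cannot conclude that the closure of a countable union of constructible sets is a finite sub-union; what you actually need, and what does hold, is only that the closure is a single proper closed subset, Galois-stable and hence defined over $\Qbar$). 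But the entire mathematical content of the conjecture is the step you explicitly leave open: showing that $4d$ common preperiodic points force a proper algebraic condition on $(f,g)$.

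Neither of your two proposed routes is carried out, and each faces an obstruction you should name more precisely. For route (i), an arithmetic-B\'ezout or successive-minima bound for the number of points of $\widehat{h}_f+\widehat{h}_g$-height zero on the diagonal inevitably involves the heights of $f$ and $g$ themselves (through the arithmetic degrees of $\Lbar|_\Delta$ and a lower bound for the essential minimum), and these are unbounded on $\Rat_d\times\Rat_d$; this is precisely why the paper's proof of Theorem~\ref{general bound} is non-effective and produces no explicit $V_d$ or $B_d$. For route (ii), a ``finite-level Levin--Przytycki'' theorem asserting that $4d$ common preperiodic points already force a compositional relation is not known in any form, and the Doyle--Hyde pairs with more than $d^2+4d$ common preperiodic points show that such a statement would have to be conditional on excluding explicit positive-dimensional families — families which no one has classified. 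Your parallel suggestion to classify the top-dimensional components of the closure of the $4d-1$ locus is likewise only stated, not executed. In short: the proposal correctly identifies what a proof would require, but proves nothing beyond what Theorem~\ref{general bound} and Corollary~\ref{density cor} already give, and the conjecture remains open.
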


A key ingredient in the proof of Theorem \ref{general bound} is the following result that we consider interesting in its own right:

\begin{theorem} \label{nonzero measure}
For each degree $d\geq 2$, the  pairwise-bifurcation measure $\mu_\Delta$ is nonzero on the moduli space 
	$$(\Rat_d \times \Rat_d)/\Aut \P^1$$
of all pairs $(f,g)$ of degree $d$.  
\end{theorem}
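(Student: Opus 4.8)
The plan is to transpose to the pairwise setting the arguments --- due to Dujardin--Favre, Bassanelli--Berteloot \cite{Bassanelli:Berteloot}, Buff--Epstein \cite{Buff:Epstein:PCF}, and in the sharp form now available from Berteloot--Bianchi--Dupont \cite{Berteloot:Bianchi:Dupont} and Gauthier--Vigny \cite{Gauthier:Vigny} --- that establish nonvanishing of the bifurcation measure on a moduli space of rational maps. Write $\cM = (\Rat_d\times\Rat_d)/\Aut\P^1$ and
\[
N := \dim_\C \cM = 2(2d+1)-3 = 4d-1,
\]
and recall that $\mu_\Delta = T_\Delta^{\wedge N}$ is the top self-intersection of the pairwise-bifurcation current $T_\Delta$, a closed positive $(1,1)$-current on $\cM$ with continuous plurisubharmonic local potentials. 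Being a positive measure, $\mu_\Delta$ is nonzero as soon as it charges one small polydisc, so it suffices to produce a single pair $(f_0,g_0)\in\cM$ near which $T_\Delta^{\wedge N}$ is nonzero. As always with such statements, the substance is that $N$ independent ``activity'' events must occur simultaneously.

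Recall from the construction of $T_\Delta$ that it dominates, for each common preperiodic point $a_0$ of $(f_0,g_0)$ that admits a holomorphic continuation $a = a(f,g)$ on a neighborhood $U$ --- for instance because $a_0$ is repelling periodic for one of the two maps --- the \emph{pairwise activity current}
\[
dd^c\psi_a, \qquad \psi_a(f,g) := \max\{\,\hat G_f(a(f,g)),\ \hat G_g(a(f,g))\,\},
\]
where $\hat G_f, \hat G_g$ are the dynamical Green's functions. By monotonicity of the Monge--Amp\`ere operator it follows that, whenever $(f_0,g_0)$ admits $N$ common preperiodic points $a_1,\dots,a_N$ of this type,
\[
\mu_\Delta = T_\Delta^{\wedge N} \;\geq\; dd^c\psi_{a_1}\wedge\cdots\wedge dd^c\psi_{a_N}
\]
near $(f_0,g_0)$, so it is enough to make the right-hand wedge product a nonzero measure. (If one prefers the description of $T_\Delta$ as a limit of normalized preperiodicity loci, one runs the same scheme at finite level and passes to the limit, the only extra input being control of excess intersection.) To make the $dd^c\psi_{a_i}$ tractable, arrange that $a_i$ is a repelling periodic point for exactly one of $f_0, g_0$ --- say $g_0$ --- and is strictly preperiodic but \emph{not} persistently preperiodic for the other, $f_0$; this is the generic behavior and excludes only Latt\`es-type phenomena. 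Then $\hat G_g(a_i)\equiv 0$ near $(f_0,g_0)$, so $dd^c\psi_{a_i} = dd^c\hat G_f(a_i)$ is the activity current of $a_i$ for $f$; near the corresponding Misiurewicz parameter this is a positive multiple of the smooth hypersurface $[H_i]$, $H_i := \{(f,g) : a_i(g)\in\Preper(f)\}$, by the local structure theory of activity currents \cite{Bassanelli:Berteloot,Gauthier:Vigny}.

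The main obstacle is the remaining \emph{transversality}: one must find $(f_0,g_0)$ and the $a_i$ so that $H_1,\dots,H_N$ are smooth at $(f_0,g_0)$ with linearly independent conormals, for then $dd^c\psi_{a_1}\wedge\cdots\wedge dd^c\psi_{a_N}$ dominates a positive multiple of $[H_1]\wedge\cdots\wedge[H_N]$, a nonzero point mass. The key subtlety is that $N = 4d-1$ exceeds $\dim\Rat_d = 2d+1$, so the $N$ conditions cannot be imposed within a single factor: one must distribute the $a_i$ between the two factors, some activated through the $f$-dependence and some through the $g$-dependence, and the bookkeeping $\dim\Rat_d + \dim\Rat_d - \dim\Aut\P^1 = 4d-1$ is precisely what makes $N = 4d-1$ the correct, and maximal, self-intersection exponent. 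I would establish the transversality in one of two ways: either by checking the required genericity for the explicit pairs of Doyle--Hyde \cite{Doyle:Hyde}, which already carry more than $d^2+4d\geq 4d-1$ common preperiodic points; or by constructing $(f_0,g_0)$ directly --- prescribing the distributed list of common preperiodic points with their periods and preperiods, solving for $(f_0,g_0)$ by the implicit function theorem, and verifying that a first-order move of a single coefficient of $f_0$ (resp.\ of $g_0$) perturbs the relevant finite orbit in a prescribable direction, invoking Thurston-rigidity-type transversality \cite{Buff:Epstein:PCF} where a bare dimension count does not suffice.

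Granting the transversality, the Bedford--Taylor product $dd^c\psi_{a_1}\wedge\cdots\wedge dd^c\psi_{a_N}$ puts positive mass on every neighborhood of $(f_0,g_0)$, hence so does $\mu_\Delta$, which is therefore nonzero. An alternative, more arithmetic route is available: pairs carrying at least $N$ common preperiodic points of bounded complexity are small points for the adelic metrized line bundle whose curvature is $T_\Delta$, so Zariski-density of such pairs in $\cM$ would force $T_\Delta^{\wedge N}$ to be a nonzero probability measure via Yuan--Zhang equidistribution \cite{Yuan:Zhang:quasiprojective} --- reducing the statement, once more, to a genericity input of the same flavor; I would present the complex-dynamical argument as the main line.
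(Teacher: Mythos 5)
Your overall architecture matches the paper's: both arguments reduce the nonvanishing of $\mu_\Delta$ to exhibiting a single pair $(f_0,g_0)$ with $4d-1$ common preperiodic points whose coincidence loci meet ``rigidly'' (what the paper calls a rigid $(4d-1)$-repeller), and both then localize the measure near that pair by a Buff--Epstein / Berteloot--Bianchi--Dupont / Gauthier--Vigny style argument. But the proposal stops exactly where the work begins. The entire content of the theorem is the transversality input, and you leave it at ``I would establish the transversality in one of two ways'' without executing either; neither suggestion is routine. The Doyle--Hyde examples are pairs of \emph{polynomials}, and whether their common preperiodic points impose $4d-1$ independent conditions on deformations in the full $(4d-1)$-dimensional moduli space of pairs of rational maps is precisely the kind of claim that requires a genuine computation --- the paper uses those examples only for the $2$-dimensional quadratic family in Section \ref{polynomials}, where the transversality reduces to a $2\times 2$ determinant. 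For the general case the paper's solution is specific and nontrivial: take $(f_0,g_0)=(z^d,\zeta z^d)$ with $\zeta$ a \emph{primitive} $(d+1)$-th root of unity, exploit the identity $f_0^2=g_0^2$, and prove the rigidity statement (Theorem \ref{rigidity of special pair}) by showing that the derivative of $(f,g)\mapsto f^2-g^2$ at this pair has the maximal rank $4d-1$ (Lemma \ref{linear algebra}, an explicit matrix computation in which primitivity of $\zeta$ is essential --- the statement is \emph{false} for $\zeta$ of order $\le d$, as Remark \ref{poly examples} shows). The rigid $(4d-1)$-repeller is then extracted from the roots of unity by an inductive intersection of the loci $\{P_i=Q_i\}$ in the proof of Theorem \ref{monomial theorem}. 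None of this is a bare dimension count or an off-the-shelf Thurston-type transversality, and your proposal contains no candidate pair for which the verification is actually carried out.

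A secondary gap is the localization step. You assert $\mu_\Delta \ge dd^c\psi_{a_1}\wedge\cdots\wedge dd^c\psi_{a_N}$ with $\psi_{a_i}=\max\{\hat{G}_f(a_i),\hat{G}_g(a_i)\}$, which requires the domination $\hat{T}_\Delta \ge dd^c\psi_{a_i}$ of the pairwise-bifurcation current by the activity current of a single marked common preperiodic point. That is not a formal consequence of $\hat{T}_\Delta=\pi_*(\hat{T}_f\wedge\hat{T}_g)$, and you do not prove it. The paper sidesteps this entirely: given the rigid repeller, Proposition \ref{transverse} shows directly that $\hat{T}_\Phi^{\wedge k}\wedge[Y]$ charges every neighborhood of the repeller, by pushing a tubular neighborhood of the repelling cycle forward under the vertically expanding dynamics and using that the base point lies in the support of the fiberwise maximal measure. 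So both halves of your argument need to be filled in, and the transversality half is the essential one.
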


\noindent
Here, $\Aut\P^1 \iso \PSL_2\C$ acts on the space $\Rat_d\times\Rat_d$ by simultaneous conjugation, 
$$A\cdot (f,g) = (A\circ f\circ A^{-1}, \, A\circ g\circ A^{-1}),$$ 
and the moduli space is well defined as a complex orbifold of dimension $4d-1$.  The {\bf pairwise-bifurcation current} is a closed, positive $(1,1)$-current on $\Rat_d \times \Rat_d$ defined by 
\begin{equation} \label{bif current 1}
	\hat{T}_\Delta := \pi_* \left( p_1^*\hat{T} \wedge p_2^*\hat{T}\right)
\end{equation}
where $\pi: (\Rat_d \times \Rat_d)\times \P^1 \to \Rat_d \times \Rat_d$ is the projection; $\hat{T}$ is the dynamical Green current on $\Rat_d \times\P^1$ associated to the universal family; and $p_i$ is the projection $(\Rat_d \times \Rat_d)\times\P^1 \to \Rat_d\times\P^1$ given by $p_i(f_1, f_2, z) = (f_i, z)$ for $i = 1, 2$.  (See \S\ref{stable subsection} for the definition of $\hat{T}$ and \S\ref{gen bif current} for more information about $\hat{T}_\Delta$.)  The associated {\bf pairwise-bifurcation measure} is defined by 
\begin{equation} \label{bif measure 1}
	\mu_\Delta := \left(\hat{T}_\Delta\right)^{\wedge (4d-1)}
\end{equation}
on $\Rat_d\times\Rat_d$, which projects to a measure on the moduli space of pairs $(f,g)$. The current $\hat{T}_\Delta$ is a special case of the bifurcation currents introduced in \cite{Gauthier:Vigny}, extending the notions of bifurcation current and measure that characterize stability for holomorphic families of maps on $\P^1$ \cite{D:current, D:lyap, Bassanelli:Berteloot, Dujardin:Favre:critical}. With this perspective the statement of Theorem \ref{nonzero measure} may be compared with \cite[Proposition 6.3]{Bassanelli:Berteloot} that the (standard) bifurcation measure does not vanish identically on the moduli space $\M_d = \Rat_d/\Aut\P^1$. More details are given in Section \ref{nondegeneracy:bifmeasure}. 

In working towards Theorem \ref{general bound} and Theorem \ref{Lattes theorem intro}, we actually prove two contrasting results in a more general setting.  Let $S$ be a smooth and irreducible quasiprojective variety over $\C$, and let $k = \C(S)$ be its function field.   An {\bf algebraic family of pairs $(f,g)$ of degree $d\geq 2$ over $S$} is a pair of rational functions $f, g \in k(z)$ of degree $d$ for which $f$ and $g$ each induce holomorphic maps $S \times \P^1 \to \P^1$ via specialization $(t, z) \mapsto f_t(z)$.  We say that the family $(f,g)$ is {\bf isotrivial} if the induced map $S \to (\Rat_d \times\Rat_d)/\Aut\P^1$ is constant; the family $(f,g)$ is {\bf maximally non-isotrivial} if the induced map $S \to (\Rat_d\times\Rat_d)/\Aut\P^1$ is finite-to-one.  

\begin{theorem} \label{density result}
Let $S$ be a smooth and irreducible quasiprojective variety over $\C$.  Suppose that $(f,g)$ is a maximally non-isotrivial algebraic family of pairs over $S$ of degree $d\geq 2$.  Let $m = \dim_\C S$.  Then the set of points 
$$\{(s, x_1, \ldots, x_\ell) \in (S \times (\P^1)^\ell)(\C):  x_i \in \Preper(f_s) \cap \Preper(g_s) \mbox{ for each } i\}$$
is Zariski dense in $S \times (\P^1)^\ell$ for each $1 \leq \ell \leq m$.  
\end{theorem}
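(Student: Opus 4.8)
The plan is to reduce to $\ell=m$ and to identify the common‑preperiodic locus with (essentially) the support of an archimedean ``pairwise bifurcation measure'' attached to $(f,g)$ together with $m$ free marked points; Zariski density will then come from the non‑vanishing of this measure, which is the single place where Theorem~\ref{nonzero measure} and maximal non‑isotriviality enter. The reduction to $\ell=m$ is immediate: the projection $S\times(\P^1)^m\to S\times(\P^1)^\ell$ forgetting the last $m-\ell$ coordinates is dominant and carries the $(\ell=m)$ locus into the $\ell$ one, so Zariski density for $\ell=m$ propagates to all $1\le\ell\le m$. I would then split two cases according to the dichotomy of \cite{BD:preperiodic,Yuan:Zhang:II}: writing $S_{\mathrm{inf}}=\{s\in S:\Preper(f_s)=\Preper(g_s)\}$, if $S_{\mathrm{inf}}$ is Zariski dense in $S$ the theorem is trivial, since for $s\in S_{\mathrm{inf}}$ the infinite set $\Preper(f_s)\cap\Preper(g_s)=\Preper(f_s)$ is Zariski dense in the curve $\P^1$, so $\{s\}\times(\P^1)^m$ lies in the Zariski closure of the common‑preperiodic locus, and letting $s$ vary over $S_{\mathrm{inf}}$ fills up $S\times(\P^1)^m$. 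Hence I may assume $S_{\mathrm{inf}}$ lies in a proper subvariety of $S$; equivalently $\Preper(f_s)\cap\Preper(g_s)$ is finite for $s$ outside a proper subvariety, and the image of the classifying map $\phi: S\to\cM:=(\Rat_d\times\Rat_d)/\Aut\P^1$ is not contained in the Zariski closure of the locus of pairs with $\Preper(f)=\Preper(g)$.

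In this main case I would work on $X:=S\times(\P^1)^m$. For $1\le i\le m$, let $\hat T_{f,i}$ (resp.\ $\hat T_{g,i}$) denote the closed positive $(1,1)$‑current on $X$ obtained by pulling back the dynamical Green current $\hat T$ on $\Rat_d\times\P^1$ along $(s,x_1,\dots,x_m)\mapsto(f_s,x_i)$ (resp.\ along $(s,x_1,\dots,x_m)\mapsto(g_s,x_i)$); each has continuous local potentials, and $\hat T_{f,i}$ restricts on a fibre $\{s\}\times(\P^1)^m$ to the pullback of the measure of maximal entropy $\mu_{f_s}$ in the $i$‑th coordinate. Set
\[
\Theta\ :=\ \bigwedge_{i=1}^{m}\bigl(\hat T_{f,i}\wedge\hat T_{g,i}\bigr),
\]
a closed positive measure on $X$. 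Writing $Z:=\{(s,\mathbf x)\in X(\C):x_i\in\Preper(f_s)\cap\Preper(g_s)\ \text{for all }i\}$, it now suffices to prove: (a) $\supp\Theta\subseteq\overline{Z}$; (b) $\Theta$ puts no mass on any proper subvariety of $X$; (c) $\Theta\neq0$. Indeed (a)--(c) force $\overline{Z}=X$, which finishes the main case.

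For (a) I would combine equidistribution of periodic points in families with continuity of the (here $2m$‑fold) Monge--Amp\`ere product along the locally uniformly convergent Green‑function potentials: $\Theta$ is a weak limit of normalized integration currents over loci on which each $x_i$ is simultaneously periodic for $f_s$ and $g_s$, all of which lie in $Z$ (the components on which the defining equations become dependent are persistent common periodic sections, themselves contained in $Z$), whence $\supp\Theta\subseteq\overline{Z}$. Assertion (b) is standard pluripotential theory: Monge--Amp\`ere products of currents with continuous potentials give no mass to pluripolar sets, and proper analytic subvarieties are pluripolar. For (c)---the heart of the proof---I would show, on a suitable nef compactification of $S$ (in the sense of \cite{Yuan:Zhang:quasiprojective}, or via the extended bifurcation currents of \cite{Gauthier:Vigny}, so that no mass escapes to infinity), that the total mass of $\Theta$ is a positive multiple of $\int_{\overline{\phi(S)}}[\hat T_\Delta]^{\wedge m}$, since $\hat T_\Delta$ descends to $\cM$ and pulls back under $\phi$ to the analogous current of the family while $\phi$ is generically finite onto the $m$‑dimensional subvariety $\overline{\phi(S)}$ of $\cM$. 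By Theorem~\ref{nonzero measure}, $\mu_\Delta=(\hat T_\Delta)^{\wedge(4d-1)}\neq0$, so $[\hat T_\Delta]$ is a big and nef class on $\cM$; consequently $[\hat T_\Delta]$ restricts to a big class, with $\int_V[\hat T_\Delta]^{\wedge\dim V}>0$, on every subvariety $V\not\subseteq\mathbb B_+([\hat T_\Delta])$. Thus (c) holds provided $\overline{\phi(S)}\not\subseteq\mathbb B_+([\hat T_\Delta])$, and this should follow from the second alternative of the dichotomy once one knows $\mathbb B_+([\hat T_\Delta])$ is contained in the Zariski closure of the locus of pairs sharing all their preperiodic points.

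The step I expect to be the main obstacle is exactly this last point: controlling the augmented base (degeneracy) locus $\mathbb B_+([\hat T_\Delta])$ of the pairwise bifurcation current and verifying that a maximally non‑isotrivial family not generically contained in the ``special'' locus stays outside it---that is, upgrading ``$\hat T_\Delta\neq0$'' (Theorem~\ref{nonzero measure}) to ``the restriction of $[\hat T_\Delta]$ to the family's image remains big''. The accompanying technical work is to make the mass computation rigorous over the quasiprojective moduli space, i.e.\ to rule out escape of mass at the boundary, which is precisely what the Yuan--Zhang adelic formalism and the Gauthier--Vigny extension of bifurcation currents are built to do; by comparison the continuity of the $2m$‑fold wedge and the bookkeeping for improper intersections in (a) are routine.
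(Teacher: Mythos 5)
Your proposal diverges entirely from the paper's proof, and it has a genuine gap at its load-bearing step (c). The paper proves Theorem \ref{density result} by a soft inductive argument with no currents at all: given a Zariski-open $\Omega$, one picks a hypersurface $P_1$ of periodic points of $f$ meeting $\Omega$, views it as a marked point over a branched cover of $S$, and applies the marked-point stability theorem of \cite{D:stableheight} (Theorem \ref{stable point}) to the pair $(g,P_1)$ to produce a parameter where $P_1$ is also (pre)periodic for $g$; isotriviality of $(g,P_1)$ for all large-period $P_1$ is excluded by interpolation, using maximal non-isotriviality, and one then cuts the base dimension by one and induces. Your step (c), by contrast, requires that the pairwise-bifurcation measure $\mu_{\Phi,\Delta}=(\hat T_{\Phi,\Delta})^{\wedge m}$ be nonzero for \emph{every} maximally non-isotrivial family whose image is not generically in the special locus. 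This is precisely the non-degeneracy problem that the rest of the paper only resolves under an extra hypothesis (existence of a rigid $m$-repeller, as in Theorem \ref{non-density result} and Corollary \ref{our measure}); Theorem \ref{nonzero measure} gives $(\hat T_\Delta)^{\wedge(4d-1)}\neq 0$ on the full moduli space, but that does not yield positivity of $(\hat T_\Delta|_V)^{\wedge m}$ on an arbitrary $m$-dimensional subvariety $V$. Your proposed fix via bigness of $[\hat T_\Delta]$ and control of $\mathbb B_+([\hat T_\Delta])$ is exactly the point you defer, and nothing in the paper (or in \cite{Gauthier:Vigny}, \cite{Yuan:Zhang:quasiprojective}) supplies the needed inclusion of the augmented base locus in the special locus; as written, (c) is unproven and the argument does not close.

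There is a second, smaller but still real, gap in (a). The identity $\supp\Theta\subseteq\overline Z$ is not a consequence of equidistribution of periodic points plus Bedford--Taylor continuity: the potentials of the normalized periodic-point divisors $[\,\mathrm{Per}_n\,]/d^n$ converge to the Green potentials only in $L^1_{\mathrm{loc}}$ (they have logarithmic singularities along $\mathrm{Per}_n$), so the wedge products of the approximants need not converge to $\hat T_{f,i}\wedge\hat T_{g,i}$, and in any case $\lim([A_n]\wedge[B_n])\neq(\lim[A_n])\wedge(\lim[B_n])$ in general. Establishing that points of $\supp(\hat T_f\wedge\hat T_g)$ are approximated by common preperiodic parameters would require a Montel-type argument (this is essentially the converse direction of Proposition \ref{transverse}, which the paper proves only in the rigid-repeller direction). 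If you want a current-theoretic proof, you would need to supply both of these missing arguments; the paper's route through Theorem \ref{stable point} avoids them entirely and is the argument to internalize here.
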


\begin{cor} \label{density cor}
In every degree $d \geq 2$, the set of pairs $(f,g)$ sharing at least $4d-1$ distinct preperiodic points is Zariski dense in $\Rat_d \times \Rat_d$.  
\end{cor}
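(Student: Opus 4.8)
The plan is to deduce Corollary~\ref{density cor} from Theorem~\ref{density result} applied to a slice $S$ of $\Rat_d\times\Rat_d$ that is transverse to the $\Aut\P^1$-orbits and has the full moduli dimension $4d-1$, and then to spread the resulting preperiodic points back along the orbits. Concretely, I would first fix a smooth, irreducible, quasiprojective subvariety $S\subseteq\Rat_d\times\Rat_d$ of dimension $4d-1$ cut out inside $\Rat_d\times\Rat_d$ by a generic linear section of codimension $3=\dim\Aut\P^1$ (for instance, via generic divisors of bidegree $(1,0)$ and $(0,1)$, or generic hyperplanes after a Segre embedding). By Bertini such an $S$ is smooth and irreducible, and the universal family restricts to an algebraic family of pairs $(f,g)$ over $S$ in the sense above. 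Since $(\Rat_d\times\Rat_d)/\Aut\P^1$ has dimension $4d-1$ while $\dim(\Rat_d\times\Rat_d)=4d+2$, a general $\Aut\P^1$-orbit has dimension exactly $3$ and a general point of $\Rat_d\times\Rat_d$ has finite stabilizer; so a generic codimension-$3$ slice meets a general orbit in finitely many points, and the induced map $S\to(\Rat_d\times\Rat_d)/\Aut\P^1$ is generically finite. Thus $(f,g)$ is maximally non-isotrivial over $S$, and Theorem~\ref{density result} applies.

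Next I would invoke Theorem~\ref{density result} for this $S$ with $\ell=m=4d-1$: the set $Z$ of tuples $(s,x_1,\dots,x_{4d-1})\in S\times(\P^1)^{4d-1}$ with $x_i\in\Preper(f_s)\cap\Preper(g_s)$ for every $i$ is Zariski dense in $S\times(\P^1)^{4d-1}$. As $S\times(\P^1)^{4d-1}$ is irreducible and the collision locus $\bigcup_{i<j}\{x_i=x_j\}$ is a proper closed subvariety, the set $Z\setminus\bigcup_{i<j}\{x_i=x_j\}$ remains Zariski dense; its image in $S$ under the projection is then Zariski dense, and it consists of pairs $s=(f,g)\in S$ with $|\Preper(f)\cap\Preper(g)|\ge 4d-1$. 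Writing $W\subseteq\Rat_d\times\Rat_d$ for the locus of all such pairs (the set in the statement), we conclude that $W\cap S$ is Zariski dense in $S$.

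Finally I would transfer density from $S$ to $\Rat_d\times\Rat_d$ using $\Aut\P^1$-equivariance. Conjugation by $A\in\Aut\P^1$ carries $\Preper(f)$ bijectively onto $\Preper(A\circ f\circ A^{-1})$, so $W$ is $\Aut\P^1$-invariant and the action map $\alpha\colon\Aut\P^1\times S\to\Rat_d\times\Rat_d$ sends $\Aut\P^1\times(W\cap S)$ into $W$. By the choice of $S$ the general fiber of $\alpha$ is finite, so $\overline{\alpha(\Aut\P^1\times S)}$ has dimension $3+(4d-1)=4d+2$ and hence equals the irreducible variety $\Rat_d\times\Rat_d$; since $\Aut\P^1\times(W\cap S)$ is Zariski dense in $\Aut\P^1\times S$ and $\alpha$ is Zariski-continuous, $\overline{W}\supseteq\overline{\alpha(\Aut\P^1\times(W\cap S))}\supseteq\overline{\alpha(\Aut\P^1\times S)}=\Rat_d\times\Rat_d$. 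This proves the corollary.

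The one point demanding care is the construction of the slice $S$ with all three required features — smooth and irreducible, maximally non-isotrivial, and with $\Aut\P^1\cdot S$ Zariski dense in $\Rat_d\times\Rat_d$. This is a standard general-position argument, whose only input is that a general $\Aut\P^1$-orbit has the expected dimension $3$ (which is forced by the stated dimension $4d-1$ of the moduli space); the rest is a formal manipulation of Zariski closures together with Theorem~\ref{density result}.
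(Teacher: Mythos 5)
Your proposal is correct and is essentially the deduction the paper intends: apply Theorem \ref{density result} with $\ell=m=4d-1$ to a smooth, irreducible, maximally non-isotrivial slice of dimension $4d-1$, discard the big diagonal (harmless since $S\times(\P^1)^{4d-1}$ is irreducible), project to $S$, and saturate by the $\Aut\P^1$-action. The only place your write-up is softer than necessary is the general-position construction of the slice; the paper supplies an explicit one, namely the subvariety $S_d$ of \S\ref{monomials} (pairs with $f$ normalized to fix $0,1,\infty$), which is visibly smooth, irreducible, of dimension $4d-1$, finite-to-one onto a Zariski-open subset of the moduli space of pairs, and hence can be substituted for your generic linear section to make the argument fully explicit.
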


Let $(f,g)$ be an algebraic family of pairs parameterized by $S$, with $m = \dim_\C S$.  An $m$-tuple ${\bf x} = (x_1, \ldots, x_m) \in (\P^1)^m$ is a {\bf rigid $m$-repeller} at $s_0 \in S$ if  (1) each $x_i$ is preperiodic to a repelling cycle for $f_{s_0}$ and for $g_{s_0}$, and (2) there is no non-constant holomorphic disk $\phi: \D \to S \times (\P^1)^m$ parametrizing a family of $m$ common preperiodic points for $(f_{\pi(\phi(t))}, g_{\pi(\phi(t))})$ with $\phi(0) = (s_0, {\bf x})$, where $\pi$ is the projection to $S$.

The following {\em non-density} result should be contrasted with the density result of Theorem \ref{density result}:  

\begin{theorem} \label{non-density result}
Fix degree $d \geq 2$, and suppose that $S$ is a smooth, irreducible quasi-projective variety,  parameterizing an algebraic family of pairs $(f,g)$ of degree $d$, defined over $\Qbar$.  Let $m = \dim_\C S$.  Suppose there exists a rigid $m$-repeller at some $s_0 \in S(\C)$, and assume that $f$ and $g$ are not both conjugate to $z^{\pm d}$ over the algebraic closure of $k = \Qbar(S)$.  Then the set of points 
$$\{(s, x_1, \ldots, x_{m+1}) \in (S \times (\P^1)^{m+1})(\Qbar):  x_i \in \Preper(f_s) \cap \Preper(g_s) \mbox{ for each } i\}$$
is {\em not} Zariski dense in $S \times (\P^1)^{m+1}$.  
\end{theorem}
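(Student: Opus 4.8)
The plan is to argue by contradiction, using height machinery and the equidistribution theorem of Yuan--Zhang \cite{Yuan:Zhang:quasiprojective} in the form appropriate to quasiprojective bases. Suppose the displayed set of $(m{+}1)$-tuples of common preperiodic points were Zariski dense in $S \times (\P^1)^{m+1}$. We work on the fiber product $X = (\P^1)^{m+1}$ over the function field $k = \Qbar(S)$, or rather over $\kbar$, equipped with the adelic line bundle coming from the dynamical Green functions: on each factor we place $\tfrac12(\hhat_f + \hhat_g)$, the average of the canonical heights attached to $f$ and $g$. A common preperiodic point of $f_s$ and $g_s$ in a fiber is, by specialization, a point of height zero with respect to this metrized bundle (the canonical height of a preperiodic point vanishes), and the Zariski-density hypothesis produces an infinite sequence of such points that is generic. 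The small-points equidistribution theorem then forces the adelic metric to have a very rigid structure — its curvature measure must be, up to a constant, a product of the fiberwise bifurcation-type currents — and in particular the arithmetic self-intersection number of the bundle on the generic fiber must vanish.

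The key computation is to unwind what ``arithmetic self-intersection number zero on $(\P^1)^{m+1}$'' says. Here the metrized bundle is a sum of pullbacks from the $m{+}1$ coordinate $\P^1$'s, so its top self-intersection expands (by multilinearity of the arithmetic intersection pairing) into a sum of mixed terms; each nonzero mixed term is, up to combinatorial constants, an integral against a wedge of the currents $p_i^*\hhat_f$, $p_i^*\hhat_g$ over the various factors, integrated over the base $S$. By the non-negativity of these currents (they are the restrictions to $S$ of closed positive currents in the Gauthier--Vigny sense), vanishing of the total forces vanishing of every mixed term. The crucial term to extract is the one using each of the $m{+}1$ factors exactly once with the ``diagonal'' density $\hhat_f \cdot \hhat_g$ — I claim this term is a positive multiple of $\int_S \mu_\Delta$, the total mass of the pairwise-bifurcation measure associated to the family $(f,g)$ on $S$, which by the maximal non-triviality is the push-forward of the honest bifurcation measure times the degree of the classifying map. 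The rigid $m$-repeller hypothesis at $s_0$ is exactly what is needed to see that $\mu_\Delta \neq 0$: near $s_0$ the $m$ repelling preperiodic points move holomorphically and independently, so the wedge $p_1^*\hhat_f \wedge p_1^*\hhat_g \wedge \cdots$ restricted to the graph of that family is a nonzero measure on a neighborhood of $s_0$ in $S$, and rigidity prevents a degeneracy that would make the wedge vanish. This contradicts the vanishing forced by equidistribution, and the contradiction proves the theorem.

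The exceptional hypothesis — that $f$ and $g$ are not both $\kbar$-conjugate to $z^{\pm d}$ — enters precisely to guarantee that the adelic line bundle is big (equivalently, that its arithmetic volume is positive unless the metric is ``flat''), so that the equidistribution/rigidity dichotomy applies and is not vacuous; in the excluded case the canonical heights are pulled back from a torus and the height-zero locus is genuinely dense for structural reasons, which is why that case is removed. One also needs the standard reduction that it suffices to treat the base $S$ up to a finite cover and up to replacing $S$ by a Zariski-open subset, so that the family extends to a nice model and the Yuan--Zhang formalism applies; this is routine given their setup.

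The main obstacle, and the step that will require real care, is the identification of the ``all factors used once, with diagonal density'' term in the arithmetic self-intersection expansion with the total mass of $\mu_\Delta$ on $S$, together with the proof that this mass is strictly positive under the rigid-$m$-repeller hypothesis. Controlling the other mixed terms (those that use some factor twice, which involve $\hhat_f^{\wedge 2}$ or $\hhat_g^{\wedge 2}$ on a single $\P^1$, hence vanish for trivial degree reasons on a curve, or reduce to lower-dimensional bifurcation data) is comparatively mechanical, but bookkeeping the combinatorics of the expansion so that no positive contribution is accidentally cancelled is where one must be vigilant — this is the dynamical heart of the argument and mirrors the one-parameter analysis of \cite{Mavraki:Schmidt}, now carried out over an $m$-dimensional base with the higher-dimensional equidistribution input of \cite{Yuan:Zhang:quasiprojective} and the bifurcation-current technology of \cite{Gauthier:Vigny}.
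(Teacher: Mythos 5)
Your scaffolding is right in outline---argue by contradiction, extract a generic sequence of common preperiodic points, invoke Yuan--Zhang, and use the rigid $m$-repeller to show the pairwise-bifurcation measure $\mu_{\Phi,\Delta}$ is nonzero (this last step is Proposition \ref{transverse} and Corollary \ref{our measure}, though your sketch of it is thin: one needs the repelling point to lie in the support of the fiberwise maximal-entropy measure and the mass-pushing argument under the expansion of $\Phi^p$, not merely that the $m$ points ``move holomorphically and independently''). But your central step does not work. You deduce that the arithmetic self-intersection number of the canonical adelic bundle vanishes, expand by multilinearity, and claim each mixed term is a non-negative archimedean integral of a wedge of Green currents, one of which is the total mass of $\mu_\Delta$. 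This conflates two different kinds of quantities: an arithmetic intersection number of $n+1$ adelic bundles on an $n$-dimensional variety is built from Green's functions and non-archimedean local pairings, and its vanishing says nothing about the archimedean volume $\int c_1^{\wedge n}$. Already for a single $f\in\Rat_d(\Q)$ the canonical adelic bundle has self-intersection zero on $\P^1$ (the generic point has canonical height zero) while $\int_{\P^1}\mu_f=1$. So ``self-intersection zero'' cannot force $\int_S\mu_\Delta=0$, and your contradiction evaporates. Indeed $\int_S\mu_\Delta>0$ is exactly the \emph{non-degeneracy hypothesis} required to invoke the equidistribution theorem; it is an input, not something the equidistribution conclusion can refute on its own.

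What is actually needed in the middle (and what the paper does, following \cite{Mavraki:Schmidt}) is to apply equidistribution \emph{twice}, to the two product systems $(\Phi^{(m)},f)$ and $(\Phi^{(m)},g)$ on $S\times\Delta^m\times\P^1$, whose non-degeneracy follows from $\mu_{\Phi,\Delta}\neq 0$ via the product structure of the Green currents (Proposition \ref{higher volumes}). The same generic sequence equidistributes to both limit measures $M_f$ and $M_g$, forcing $M_f=M_g$; slicing gives $\mu_{f_s}=\mu_{g_s}$ for $\mu_{\Phi,\Delta}$-a.e.\ $s$. One then needs Theorem \ref{LPYZ} (Levin--Przytycki together with Zdunik's classification of exceptional maps) to upgrade equality of maximal-entropy measures to $\Preper(f_s)=\Preper(g_s)$ off the power-map locus, whence $\hat{T}_f=\hat{T}_g$ and $\hat{T}_{\Phi,\Delta}=\pi_*(\hat{T}_f\wedge\hat{T}_g)=\pi_*(\hat{T}_f^{\wedge 2})=0$, the desired contradiction. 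Your proposal omits this entire chain and never uses Levin--Przytycki. Relatedly, you misplace the hypothesis that $f$ and $g$ are not both conjugate to $z^{\pm d}$: it is not there to make an adelic bundle ``big,'' but because for a pair of power maps the implication $\mu_f=\mu_g\Rightarrow\Preper(f)=\Preper(g)$ fails (e.g.\ $z^d$ and $\alpha z^d$ with $|\alpha|=1$ not a root of unity), which is precisely where that hypothesis is consumed.
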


Under the hypotheses of Theorem \ref{non-density result}, Theorem \ref{density result} implies that there is a Zariski-dense collection of pairs $(f_s, g_s)$ in $S(\C)$ which have at least $m$ common preperiodic points.  But we will deduce from Theorem \ref{non-density result} that -- if {\em just one} of them forms a rigid $m$-repeller at $s_0 \in S(\C)$ --  there will be a uniform bound on the number of common preperiodic points for a general pair $(f_s, g_s)$ for complex parameters $s \in S(\C)$.  See Theorem \ref{uniform bound} and Corollary \ref{our measure}.

\begin{remark} 
The family defined by $f_s(z) = z^d$ and $g_s(z) = s^{d-1} \, z^d$ for $s\in \C^*$ does not satisfy the conclusion of Theorem \ref{non-density result}, though it satisfies all the other hypotheses of the theorem.  For example, near $s_0=1$, the common fixed point at $z=1$ splits into distinct fixed points at $z=1$ for $f_s$ and $z=1/s$ for $g_s$ near $s_0$, so it forms a rigid $1$-repeller at $s_0$.  However, we have $\Preper(f_s) = \Preper(g_s)$ for all roots of unity $s$. In the terminology of \cite{Mavraki:Schmidt}, the diagonal in $\P^1\times\P^1$ is weakly $(f,g)$-special for this example, over the function field $\C(s)$.
\end{remark}

A proof of Theorem \ref{general bound} is obtained from Theorem \ref{non-density result} by showing that the pair 
	$$f_0(z) = z^d \quad\mbox{ and } \quad g_0(z) = \zeta z^d$$
for $\zeta = e^{2\pi i/(d+1)}$ has a rigid $(4d-1)$-repeller, which also implies Theorem \ref{nonzero measure}.  The rigidity for this pair $(f_0, g_0)$ is deduced from the following:

\begin{theorem} \label{rigidity of special pair}
Fix degree $d\geq 2$.  Let $f_0(z) = z^d$ and $g_0(z) = \zeta z^d$ for $\zeta = e^{2\pi i/(d+1)}$, and let $\psi = (f,g): \D \to \Rat_d\times\Rat_d$ be a holomorphic map from the unit disk $\D\subset \C$ with $\psi(0) = (f_0, g_0)$.  If 
	$$\Preper(f_t)\cap J(f_t) = \Preper(g_t) \cap J(g_t)$$
for all $t \in \D$, where $J$ denotes the Julia set, then this family of pairs is isotrivial. 
\end{theorem}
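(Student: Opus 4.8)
The plan is to promote the hypothesis to a holomorphic conjugacy on the Julia sets, extract from it a global algebraic relation between $f_t$ and $g_t$, and then exploit the rigidity of the power map among finite Blaschke products. First I would note that $f_0(z)=z^d$ and $g_0(z)=\zeta z^d$ are hyperbolic: both have critical set $\{0,\infty\}$ consisting of superattracting fixed points and Julia set $S^1$. Hyperbolicity being open, after shrinking $\D$ — harmless, since isotriviality is the statement that a holomorphic map from the connected set $\D$ to moduli space is constant — all $f_t,g_t$ are hyperbolic. For hyperbolic families the Julia sets move by holomorphic motions that conjugate the dynamics, so there are holomorphic motions $\Phi_t\colon S^1\to J(f_t)$, $\Psi_t\colon S^1\to J(g_t)$ with $\Phi_0=\Psi_0=\mathrm{id}$, $\Phi_t\circ f_0=f_t\circ\Phi_t$ and $\Psi_t\circ g_0=g_t\circ\Psi_t$ on $S^1$. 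Since repelling cycles are dense in the Julia set, taking closures in the hypothesis gives $J(f_t)=J(g_t)=:J_t$ for all $t$. The key rigidity step: composing the S\l odkowski extensions of $\Phi$ and of $\Psi^{-1}$ yields a holomorphic motion $G_t$ of $\P^1$ over $\D$ with $G_0=\mathrm{id}$ which on $S^1$ equals $\Psi_t^{-1}\circ\Phi_t$ and so takes values in $\Psi_t^{-1}(J_t)=S^1$; a nonconstant holomorphic map $\D\to\P^1$ is open, so each $t\mapsto G_t(z)$ is constant, whence $\Phi_t=\Psi_t$. Thus one homeomorphism $\Phi_t$ conjugates $f_0\to f_t$ \emph{and} $g_0\to g_t$ on $S^1$, with $J_t=\Phi_t(S^1)$.

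Next I would extract algebra. Since $g_0^2(z)=\zeta^{d+1}z^{d^2}=z^{d^2}=f_0^2(z)$ (using $\zeta^{d+1}=1$), conjugating by $\Phi_t$ makes $f_t^2$ and $g_t^2$ agree on the infinite set $J_t$, hence $f_t^2=g_t^2$ for all $t$. And because $g_0=m\circ f_0$ for the M\"obius map $m(z)=\zeta z$, for each $u\in J_t$ the value $g_t(a)$ is the same for all $a\in f_t^{-1}(u)$ (these lie in $J_t$, and $\Phi_t^{-1}(a)$ runs over the $d$th roots of $\Phi_t^{-1}(u)$, on which $g_0$ is constant). Hence the first projection $\pi_1\colon C_t\to\P^1$ of the image curve $C_t=\{(f_t(a),g_t(a)):a\in\P^1\}$ is injective over every point of the infinite set $J_t$; but a finite morphism of curves of degree $\ge2$ is injective over only finitely many (totally ramified) points, so $\deg\pi_1=1$, i.e.\ $g_t=\rho_t\circ f_t$ for a rational $\rho_t$, necessarily with $\deg\rho_t=\deg g_t/\deg f_t=1$, so $\rho_t\in\Aut\P^1$ depends holomorphically on $t$ with $\rho_0=m$. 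Substituting into $f_t^2=g_t^2=(\rho_t f_t)^2$ and cancelling one $f_t$ gives the global identity $f_t=\rho_t\circ f_t\circ\rho_t$.

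Finally I would run the rigidity argument. Since $\rho_0=m$ has two distinct fixed points, so does $\rho_t$ near $0$, so after conjugating the whole family by a holomorphic $A_t\in\Aut\P^1$, $A_0=\mathrm{id}$ (which preserves isotriviality), I may assume $\rho_t(u)=\lambda_t u$ fixes $0,\infty$, $\lambda_0=\zeta$. Then $f_t=\rho_t f_t\rho_t$ with $\rho_t(0)=0$ forces $f_t(0)\in\{0,\infty\}$, so $f_t(0)=0$ and likewise $f_t(\infty)=\infty$ by continuity; thus $0,\infty$ are the attracting fixed points of the hyperbolic $f_t$, the immediate basin $\Omega_t$ of $0$ is one of its two Fatou components, $\partial\Omega_t=J_t$, and $\rho_t(\Omega_t)=\Omega_t$. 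Conjugating $\Omega_t\to\D$ by the normalized Riemann map $R_t$ ($R_t(0)=0$, $R_t'(0)>0$; it varies holomorphically) turns $f_t|_{\Omega_t}$ into a degree-$d$ finite Blaschke product $F_t$ fixing $0$, $F_0(w)=w^d$, and $\rho_t|_{\Omega_t}$ into the rotation $w\mapsto e^{i\phi_t}w$, $e^{i\phi_0}=\zeta$; transporting $f_t=\rho_t f_t\rho_t$ gives $F_t(w)=e^{i\phi_t}F_t(e^{i\phi_t}w)$ on $\D$, and comparing the unimodular constants of the two sides (viewed as finite Blaschke products, for which the constant is determined) forces $e^{i(d+1)\phi_t}=1$, hence by discreteness $e^{i\phi_t}\equiv\zeta$. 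So $\rho_t\equiv m$, $g_t=\zeta f_t$, and $f_t(u)=\zeta f_t(\zeta u)$ with $f_t(0)=0$, $\deg f_t=d$. Comparing leading coefficients shows $f_t$ has no finite poles, so $f_t$ is a degree-$d$ polynomial vanishing at $0$; the relations $c_k(\zeta^{k+1}-1)=0$ on its coefficients, with $\zeta$ a \emph{primitive} $(d+1)$th root of unity, leave only $f_t(u)=c(t)u^d$, $c(0)=1$. Conjugating by $u\mapsto\kappa(t)u$ with $\kappa(t)^{d-1}=c(t)$, $\kappa(0)=1$ — which commutes with $m$ — carries $(f_t,g_t)$ to the constant pair $(z^d,\zeta z^d)$, so the family is isotrivial.

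I expect the main obstacle to be the second step's passage from the dynamical conjugacy on $J_t$ — a bare homeomorphism — to the \emph{global} rational identities. That $f_t^2=g_t^2$ is immediate; but producing a M\"obius $\rho_t$ with $g_t=\rho_t\circ f_t$, i.e.\ promoting single-valuedness of the correspondence $g_t\circ f_t^{-1}$ along $J_t$ to single-valuedness on all of $\P^1$, is exactly where the power-map structure of the base point and the density of $J_t$ are used, and it needs care both with the ramification count and with the holomorphic dependence of the holomorphic motions, Riemann maps, and fixed-point normalizations. Given that, the Blaschke-product rigidity forcing $e^{i(d+1)\phi_t}=1$ and the final coefficient bookkeeping (where primitivity of $\zeta$ enters) are routine.
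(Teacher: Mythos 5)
Your architecture is viable and, after the common first reduction, genuinely different from the paper's, but your opening step contains a real gap. Having produced the two holomorphic motions $\Phi_t$ and $\Psi_t$ of $S^1$ (conjugating $f_0$ to $f_t$, resp.\ $g_0$ to $g_t$, on the Julia sets), you deduce $\Phi_t=\Psi_t$ by composing a S\l odkowski extension of $\Phi$ with ``the extension of $\Psi^{-1}$'' and declaring the result a holomorphic motion. This fails: a holomorphic motion is holomorphic in the parameter but only quasiconformal in the space variable, so neither the parameter-wise inverse $t\mapsto\tilde\Psi_t^{-1}(w)$ nor the composition $t\mapsto\tilde\Psi_t^{-1}(\tilde\Phi_t(z))$ is holomorphic in $t$; holomorphic motions are not closed under composition or inversion. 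Worse, if this argument worked it would derive $\Phi_t=\Psi_t$ from the equality $J(f_t)=J(g_t)$ alone, never using the hypothesis that the \emph{preperiodic points} in the Julia sets coincide --- and two holomorphic motions of $S^1$ with the same image $\bigcup_t\{t\}\times J_t$ need not coincide. The correct route (the one the paper takes) uses the hypothesis directly: for a preperiodic $z\in S^1$, its $\Phi$-trajectory $z_t$ lies in $\Preper(g_t)\cap J(g_t)$ for every $t$; since $\Preper(g)$ is a countable union of analytic hypersurfaces of $\D\times\P^1$ and the graph of $t\mapsto z_t$ is an irreducible analytic curve contained in that union, it lies in a single one, so $z_t$ is $g_t$-preperiodic of bounded type and hence follows a single leaf of the $g$-motion, namely the one through $z$. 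Thus $\Phi_t=\Psi_t$ on the preperiodic points, which is all your subsequent steps actually require (density and continuity then give equality on $S^1$).

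With that step repaired, the rest of your argument is correct and takes a genuinely different path from the paper's. Both proofs first establish $f_t^2=g_t^2$; the paper then finishes with a linear-algebra computation (Lemma \ref{linear algebra}, showing the derivative of $F=f^2-g^2$ at $(z^d,\zeta z^d)$ has the maximal rank $4d-1$, so the zero locus of $F$ near the base point is exactly the $\Aut\P^1$-orbit), whereas you upgrade the fiberwise constancy of $g_t$ on $f_t^{-1}(u)$, $u\in J_t$, to a global factorization $g_t=\rho_t\circ f_t$ with $\rho_t\in\Aut\P^1$, deduce $f_t=\rho_t\circ f_t\circ\rho_t$, and use uniqueness of the Blaschke factorization on the immediate basin of $0$ to force $\lambda_t^{d+1}=1$ and finally $f_t(u)=c(t)u^d$. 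Your route is more conceptual, avoids the $(4d+2)\times(2d^2+1)$ matrix, and makes visible exactly where primitivity of $\zeta$ enters (the pole-orbit count and the relations $c_k(\zeta^{k+1}-1)=0$), at the cost of more dynamical machinery: structural stability of hyperbolic families, the ramification count for $\pi_1\colon C_t\to\P^1$, and the dependence of the Riemann maps on $t$ --- though for the last of these Carath\'eodory kernel convergence (continuity) suffices, since you only need $\lambda_t$ to vary continuously into the discrete set of $(d+1)$-th roots of unity. Both arguments as written give constancy of $\psi$ only near $t=0$ and conclude on all of $\D$ by the identity theorem, so that is not a defect peculiar to yours.
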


\begin{remark}
The conclusion of Theorem \ref{rigidity of special pair} is false if we allow $\zeta$ to be a root of unity of any order $\leq d$.  See \S\ref{special rigidity}.  The proof of Theorem \ref{rigidity of special pair}, and thus of Theorem \ref{nonzero measure}, does not involve any arithmetic ingredients.  This is in contrast to the proofs of related statements in \cite{Mavraki:Schmidt}.
\end{remark}

\subsection{Background and proof ideas}
Recently, there has been a series of breakthroughs in arithmetic geometry and dynamics, leading to powerful height estimates and equidistribution results, and ultimately to uniform bounds in related settings, especially for families of abelian varieties.  Dimitrov--Gao--Habegger  \cite{DGH:uniformity} and K\"uhne \cite{Kuhne:UMM} established uniformity in the Mordell-Lang conjecture following the blueprint they laid out in \cite{DGH1, Gao:Habegger, Habegger:special}.  K\"uhne \cite{Kuhne:UMM} established an arithmetic equidistribution theorem in families of abelian varieties and combined it with Gao's results \cite{Gao:generic, Gao:MixedAS} to prove uniformity in the Manin-Mumford and Bogomolov conjectures for curves in their Jacobians, a result that Yuan also obtained later with a different approach \cite{Yuan:uniform}. K\"uhne's approach has been extended to higher dimensional subvarieties of abelian varieties by Gao-Ge-K\"uhne \cite{Gao:Ge:Kuhne}; see Gao's survey article and the references therein for more about these developments \cite{Gao:survey}. 
  In a study of more general dynamical systems, the recent complex-analytic results of Gauthier-Vigny \cite{Gauthier:Vigny} and the arithmetic equidistribution theorems of Yuan-Zhang \cite{Yuan:Zhang:quasiprojective} and Gauthier \cite{Gauthier:goodheights} provide a whole host of new tools. These equidistribution results played an important role in recent work by the second author and Schmidt, who obtained for example a version of Theorem \ref{general bound} for families of pairs $(f,g)$ on $\P^1$ parameterized by curves (see Theorem \ref{MS} below) \cite{Mavraki:Schmidt}.  This article grew out of an effort to synthesize these ideas and to extend the results of the first author with Krieger and Ye in \cite{DKY:UMM, DKY:quad}.

For maps $f, g: \P^1\to\P^1$ defined over $\C$, uniform bounds on the intersections $\Preper(f)\cap\Preper(g)$ were obtained by DeMarco-Krieger-Ye for two families of maps, with proofs that also involved both arithmetic and complex-dynamical techniques, but which relied on computations specific to the families studied:

\begin{theorem}  \cite{DKY:quad}  \label{quad}
Suppose that $f_t$ is the family of quadratic polynomials, $f_t(z) = z^2 +t$ for $t \in \C$.  There is a uniform $B$ so that 
	$$|\Preper(f_{t_1}) \cap \Preper(f_{t_2})| \leq B$$
for all $t_1 \not= t_2$.
\end{theorem}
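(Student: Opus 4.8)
The plan is to realize $\{(z^2+t_1,\,z^2+t_2)\}$ as a maximally non-isotrivial algebraic family of pairs of degree $d=2$ over the surface $S:=\{(t_1,t_2)\in\A^2:\ t_1\neq t_2\}$, defined over $\Q$, and to apply Theorem~\ref{non-density result} with $m=\dim_\C S=2$. Maximal non-isotriviality is immediate: $z^2+t$ is a normal form with trivial automorphisms, so the classifying map $S\to(\Rat_2\times\Rat_2)/\Aut\P^1$ is injective; and $f=z^2+t_1$ is not $\overline k$-conjugate to $z^{\pm2}$ (with $k=\Qbar(S)$) because the orbit of its finite critical point is infinite. So the only hypothesis that must be checked is the existence of a rigid $2$-repeller at some $s_0\in S$ --- this is the real content. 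Granting it, Theorem~\ref{non-density result} shows that common-preperiodic triples are not Zariski dense in $S\times(\P^1)^3$, and the deductions recorded in Theorem~\ref{uniform bound} and Corollary~\ref{our measure} then produce a constant $B'$ bounding $|\Preper(f_{t_1})\cap\Preper(f_{t_2})|$ for $(t_1,t_2)$ outside a proper closed subvariety $V\subsetneq S$. The passage to \emph{all} $t_1\neq t_2$ is then routine: since equal Julia sets force $t_1=t_2$ in this normal form, the intersection is finite throughout $S$ and ``$|\Preper(f_{t_1})\cap\Preper(f_{t_2})|\ge N$'' is a $\Q$-constructible condition, so the bound over $S(\Qbar)$ holds over $S(\C)$; and $V$, being a proper closed subvariety of the fixed surface $S$, is a finite union of curves and points, on each curve of which $\Preper(f_s)\neq\Preper(g_s)$ identically (the curve avoids the diagonal), so the curve case --- Theorem~\ref{MS}, or Theorem~\ref{non-density result} again via a rigid $1$-repeller --- applies and the finitely many leftover points contribute only finitely. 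The maximum of these finitely many bounds is the desired $B$.

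It remains to construct a rigid $2$-repeller, and this is the crux. The approach is to find $s_0=(t_1^0,t_2^0)\in S$ together with two distinct points $x_1,x_2$ that are common preperiodic points of $f_{s_0}$ and $g_{s_0}$, each preperiodic to a repelling cycle for \emph{both} maps. Near $s_0$ those repelling cycles and the relevant iterated-preimage branches vary holomorphically, so each $x_i$ has a holomorphic continuation $\alpha_i^f(t_1)$ as an $f$-preperiodic point and $\alpha_i^g(t_2)$ as a $g$-preperiodic point --- explicit algebraic functions --- and the locus on which the continued point stays common is the plane-curve germ $W_i:=\{\alpha_i^f(t_1)=\alpha_i^g(t_2)\}$ through $s_0$. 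Since $x_i$ is repelling-preperiodic for both maps, any holomorphic disk of common-preperiodic pairs through $(s_0,x_1,x_2)$ must, locally, project into $W_1\cap W_2$; hence it is enough to choose the data so that $W_1\neq W_2$ as germs at $s_0$ (for instance with distinct tangent lines), which forces $W_1\cap W_2=\{s_0\}$ near $s_0$ and makes the disk constant.

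The condition $W_1\neq W_2$ is the genuine obstacle, and the most natural candidates fail in a structured way. Taking $s_0=(0,-2)$ --- the power map $z^2$ and the Chebyshev map $z^2-2$ --- the only common repelling-preperiodic points are $\pm1$, and one computes that the continuations make both $W_1$ and $W_2$ equal to $\{\sqrt{1-4t_2}-\sqrt{1-4t_1}=2\}$; more generally, whenever $x_1$ and $x_2$ are Galois-conjugate over $\C(S)$ in a way compatible with both $f$ and $g$ (e.g.\ a preperiodic point and its sibling preimage, or the two fixed-point branches $\tfrac12(1\pm\sqrt{1-4t})$), flipping a square root on both sides of the defining equation leaves $W_i$ unchanged and $W_1=W_2$. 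Escaping this forces $x_1$ and $x_2$ to have genuinely different dynamical portraits relative to $(f_{s_0},g_{s_0})$ --- different cycles or preperiods for $f$ and for $g$ --- which one realizes by intersecting two suitable ``portrait curves'' in $S$ and then verifying, at a well-chosen intersection point, that the required cycles are repelling and that $W_1$ and $W_2$ meet transversally there. Alternatively, since Theorem~\ref{density result} already furnishes a Zariski-dense supply of common-preperiodic pairs $(s,x_1,x_2)$ over $S$, one may instead argue that a sufficiently generic such pair is automatically a rigid $2$-repeller. Either way, pinning down this rigidity is where I expect the real work to lie, and it is the step that replaces the specialized arithmetic computations used for Theorem~\ref{quad} in \cite{DKY:quad}.
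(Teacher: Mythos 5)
Your architecture is exactly the paper's: reduce to non-vanishing of the pairwise-bifurcation measure on the two-dimensional parameter space by exhibiting a rigid $2$-repeller, invoke Theorem~\ref{uniform bound} to get a bound off a finite union of curves, and finish on those curves with Theorem~\ref{MS} together with the Baker--Er\"emenko fact that equal Julia sets force $t_1=t_2$. Your reduction of rigidity to the condition that the two local portrait curves $W_1,W_2$ have distinct germs at $s_0$ is also the right criterion (it is the paper's transversality determinant in different clothing), and your observation that Galois-symmetric candidates such as $(0,-2)$ with $\{\pm1\}$ are doomed to give $W_1=W_2$ correctly identifies why a naive choice fails.

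But the proof has a genuine gap precisely at the point you flag as "the crux": you never actually produce a rigid $2$-repeller. You describe two escape routes, and neither is carried out. The first ("intersect two suitable portrait curves and verify transversality at a well-chosen point") is a plan, not a construction; without a specific $s_0$, specific points $x_1,x_2$, and a verified non-vanishing of the relevant Jacobian, the hypothesis of Theorem~\ref{non-density result} is unverified and nothing downstream follows. The second route -- that a "sufficiently generic" common-preperiodic pair supplied by Theorem~\ref{density result} is automatically a rigid $2$-repeller -- is unsubstantiated and cannot be taken for granted: Theorem~\ref{density result} gives Zariski density of common preperiodic configurations but no control over whether the underlying cycles are repelling, nor over rigidity (the paper makes this caveat explicitly in the proof of Proposition~\ref{nd prop}). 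The paper closes this gap with a concrete computation: it takes the Doyle--Hyde pair $(a_0,b_0)=(-21/16,-29/16)$, the common preperiodic points $p_1=5/4$ and $p_2=-7/4$ (which have genuinely different portraits for the two maps -- preperiodic to a repelling $2$-cycle and a repelling fixed point for $f_{a_0}$, lying in a repelling $3$-cycle for $f_{b_0}$), writes down the portrait equations $P_1,P_2,P_3$ in $(c,z)$, computes the four partial derivatives by implicit differentiation, and checks that the resulting $2\times2$ determinant equals $-32/135\neq0$. Some such explicit verification is indispensable; as written, your argument establishes the reduction but not the theorem.
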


\begin{theorem}  \cite{DKY:UMM} \label{Lattes}
Suppose that $f_t$ is the family of flexible Latt\`es maps $f_t(z) = (z^2-t)^2/(4z(z-1)(z-t))$ for $t \in \C\setminus\{0,1\}$.  There is a uniform $B$ so that 
	$$|\Preper(f_{t_1}) \cap \Preper(f_{t_2})| \leq B$$
for all $t_1 \not= t_2$.
\end{theorem}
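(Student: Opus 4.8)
The plan is to recognize $f_t$ as a Latt\`es map and to deduce the uniform bound from the non-density result, Theorem~\ref{non-density result}, applied to the two-dimensional family of pairs $(f_{t_1},f_{t_2})$. Concretely, $f_t$ is the Latt\`es map attached to multiplication-by-$2$ on the Legendre curve $E_t:y^2=x(x-1)(x-t)$ via the degree-two quotient $x\colon E_t\to\P^1$; in particular $d=4$ and $\Preper(f_t)=x\bigl(E_t(\C)_{\tor}\bigr)$. It is standard (and part of the assertion) that for $t_1\neq t_2$ with $t_i\neq 0,1$ the sets $x(E_{t_1,\tor})$ and $x(E_{t_2,\tor})$ are distinct, so $\Preper(f_{t_1})\cap\Preper(f_{t_2})$ is finite; the task is to bound it uniformly. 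Let $S=\{(t_1,t_2)\in\A^2:\ t_i\neq 0,1,\ t_1\neq t_2\}$, a smooth irreducible quasi-projective surface over $\Q$, over which $(f_{t_1},f_{t_2})$ is an algebraic family of pairs of degree $4$ defined over $\Q$.

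To invoke Theorem~\ref{non-density result} with $m=\dim_\C S=2$, note first that neither coordinate is conjugate to $z^{\pm4}$ over $\overline{\Qbar(S)}$, since a Latt\`es map has Julia set all of $\P^1$ and no (super)attracting cycle. The only remaining hypothesis to verify is the existence of a rigid $2$-repeller at some $s_0\in S(\C)$, which is the heart of the argument.

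\textbf{Construction of a rigid $2$-repeller.} We exploit the moving $2$-torsion value $t\in x(E_t[2])=\{0,1,\infty,t\}$ together with the $3$-division polynomial $\psi_3(x,t)=3x^4-4(1+t)x^3+6tx^2-t^2$, whose four distinct roots are the $x$-coordinates of the nonzero $3$-torsion of $E_t$. In $\A^2_{(t_1,t_2)}$ consider the curves $D=\{\psi_3(t_1,t_2)=0\}$ and $D'=\{\psi_3(t_2,t_1)=0\}$. Since $\psi_3(t_1,t_2)$ contains a $t_1^4$ term while $\psi_3(t_2,t_1)$ does not, $D\neq D'$, so $D\cap D'$ is $0$-dimensional. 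Comparing leading forms shows $\overline D$ and $\overline{D'}$ have no common point on the line at infinity of $\P^2$, so by B\'ezout $D\cap D'$ consists of $16$ affine points counted with multiplicity; intersecting with the loci $t_i\in\{0,1\}$ leaves only $(0,0)$ and $(1,1)$, each of local intersection multiplicity $4$, so at least $8$ of the $16$ points lie off these loci. Hence there is $(t_1^0,t_2^0)\in (D\cap D')(\C)$ with $t_i^0\notin\{0,1\}$; since $\psi_3(t,t)=-t^2(t-1)^2$, automatically $t_1^0\neq t_2^0$. Put $x_1:=t_1^0$ and $x_2:=t_2^0$; then $x_1\in x(E_{t_1^0}[2])\cap x(E_{t_2^0}[3])$ and $x_2\in x(E_{t_2^0}[2])\cap x(E_{t_1^0}[3])$, so $x_1,x_2$ are common preperiodic points of $f_{t_1^0},f_{t_2^0}$, each preperiodic to a repelling cycle (the maps are Latt\`es). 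For rigidity, suppose $\phi\colon\D\to S\times(\P^1)^2$ is holomorphic with $\phi(0)=(s_0,x_1,x_2)$ and parametrizes two common preperiodic points. Its image lies in $\bigcup\Sigma_{a,b,c,d}$, where $\Sigma_{a,b,c,d}$ records $y_1\in x(E_{t_1}[a])\cap x(E_{t_2}[b])$ and $y_2\in x(E_{t_1}[c])\cap x(E_{t_2}[d])$; being irreducible, it lies in a single such piece, and since $x_1$ has exact order $2$ on $E_{t_1^0}$ and exact order $3$ on $E_{t_2^0}$ (and likewise $x_2$, using $t_1^0\neq t_2^0$ and $t_i^0\notin\{0,1,\infty\}$) one has $2\mid a$, $3\mid b$, $3\mid c$, $2\mid d$. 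Because $x_1=t_1^0$ is a \emph{simple} root of $\psi_a(\cdot,t_1^0)$ not shared with the cofactor $\psi_a/\psi_2$ (two points with equal $x$-coordinate are negatives, hence of equal order), the implicit function theorem forces the branch of $\{\psi_a=0\}$ through $\phi(0)$ to be $\{y_1=t_1\}$; thus $y_1(s)=t_1(s)$ along $\phi$. Similarly the branch of $\{\psi_b=0\}$ through the exact-order-$3$ point $(x_1,t_2^0)$ coincides with a branch of $\{\psi_3=0\}$, so $\psi_3(t_1(s),t_2(s))=0$, i.e.\ $(t_1(s),t_2(s))\in D$; symmetrically $(t_1(s),t_2(s))\in D'$. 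As $D\cap D'$ is $0$-dimensional, $(t_1(s),t_2(s))$ is constant, so $\phi$ is constant in $S$. Hence $(x_1,x_2)$ is a rigid $2$-repeller at $s_0$.

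\textbf{Conclusion.} Theorem~\ref{non-density result} now shows $\{(t_1,t_2,y_1,y_2,y_3)\in (S\times(\P^1)^3)(\Qbar):\ y_i\in\Preper(f_{t_1})\cap\Preper(f_{t_2})\}$ is not Zariski dense. The passage from this non-density statement to a uniform bound over complex parameters (the mechanism of Theorem~\ref{uniform bound} and Corollary~\ref{our measure} below) yields a constant $B_0$ and a proper closed $V\subsetneq S$ with $|\Preper(f_{t_1})\cap\Preper(f_{t_2})|\le B_0$ on $(S\setminus V)(\C)$. To remove $V$ we induct on $\dim V\le 1$: on a one-dimensional component $C$, the restricted family of pairs is non-isotrivial, and it is not weakly $(f,g)$-special since otherwise $\Preper(f_{t_1})=\Preper(f_{t_2})$ at infinitely many points of $C$, forcing $C$ into the (excluded) diagonal; thus the uniform bound for families of pairs over curves of \cite{Mavraki:Schmidt} (Theorem~\ref{MS}) controls the intersection on $C$ away from finitely many points, and on the zero-dimensional components (and the leftover points) each intersection is simply a finite number. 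Taking the maximum over the finitely many components gives a uniform $B$ for all $t_1\neq t_2$, which is Theorem~\ref{Lattes}.

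\textbf{Main obstacle.} The crux is the rigid-repeller construction: one must exhibit a configuration of common preperiodic points that \emph{does not move} in a holomorphic family, and in a two-parameter setting a dimension count alone does not suffice — one needs genuine torsion coincidences with proper (transverse) intersection behavior, kept away from the degenerate loci $t_i\in\{0,1\}$ and $t_1=t_2$. The choice above — the moving $2$-torsion value tested against the $3$-division curve — is designed precisely to make the B\'ezout bookkeeping explicit and to let the implicit function theorem pin down every branch.
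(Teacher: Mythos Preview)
Your argument is correct, but it differs from the paper's in two places. For the rigid $2$-repeller you give an \emph{explicit} construction, intersecting the two $3$-division curves $D=\{\psi_3(t_1,t_2)=0\}$ and $D'=\{\psi_3(t_2,t_1)=0\}$ and tracking the B\'ezout count away from $(0,0)$ and $(1,1)$; the paper instead deduces Theorem~\ref{Lattes} as a special case of the stronger Theorem~\ref{Lattes theorem}, whose engine is Proposition~\ref{nd prop}: a non-constructive existence argument (built on Theorem~\ref{stable point} and McMullen's theorem) showing that \emph{every} subvariety of $\cL(d)$ admits a rigid repeller unless $\Preper(f_s)=\Preper(g_s)$ identically. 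This difference then propagates to the second step: to handle the exceptional locus $V$ you fall back on Theorem~\ref{MS} for its $1$-dimensional components, whereas the paper simply reapplies Proposition~\ref{nd prop} to each component and inducts on dimension, never leaving its own framework. Your route is more elementary and concrete for this particular family---everything reduces to algebra with division polynomials---but it is tailored to Legendre Latt\`es pairs and leans on \cite{Mavraki:Schmidt} at the end; the paper's route is less explicit but self-contained and yields the more general statement (arbitrary $g\in\Rat_d$ paired with a Latt\`es $f$). One small expository point: your phrase ``being irreducible, it lies in a single such piece'' for the holomorphic disk is really a Baire-category/analytic-continuation argument rather than irreducibility per se, but the conclusion is of course correct.
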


\noindent
Theorem \ref{Lattes} addressed a conjecture of Bogomolov, Fu, and Tschinkel \cite{BFT:torsion} about torsion points on pairs of elliptic curves.  Indeed, the preperiodic points of the Latt\`es map $f_t$ are the images of the torsion points on the Legendre curve $\{y^2 = x(x-1)(x-t)\}$ under the projection to the $x$-coordinate. The full conjecture from \cite{BFT:torsion} is now a theorem:  there exists a uniform bound $B$ so that every pair $(f,g)$ of (flexible) Latt\`es maps (in any choice of coordinates on $\P^1$) will either share all of their preperiodic points or have at most $B$ in common.  A proof was recently obtained by Poineau in \cite{Poineau:BFT}, and it can also be deduced from the ``relative Bogomolov" theorem proved by K\"uhne \cite{Kuhne:RBC} or the main theorem of \cite{Gao:Ge:Kuhne}.  We present an alternate proof as a corollary to Theorem \ref{Lattes theorem intro}.  

Note that Theorems \ref{quad} and \ref{Lattes} each covered a {\em two}-parameter family of pairs $(t_1, t_2)$, but, as mentioned above, the proofs were specific to these particular families.  For {\em one}-parameter families of pairs, Mavraki-Schmidt recently obtained uniform bounds that did not rely on particular dynamical features of the maps:

\begin{theorem} \cite{Mavraki:Schmidt} \label{MS}
Let $C$ be any algebraic curve in $\Rat_d \times \Rat_d$ defined over $\Qbar$, parameterizing a pair of maps $(f_t, g_t)$ for $t \in C(\C)$.  Then there exists a constant $B = B(C)$ so that, for each $t \in C(\C)$, either 
	$$|\Preper(f_t) \cap \Preper(g_t)| \leq B \qquad \mbox{or} \qquad \Preper(f_t) = \Preper(g_t).$$
\end{theorem}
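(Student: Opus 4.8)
The plan is to run, in the dynamical setting, the Dimitrov--Gao--Habegger blueprint for uniformity statements: a dichotomy according to whether the diagonal of $\P^1\times\P^1$ is $(f,g)$-special over the function field $k=\Qbar(C)$, with the special case disposed of by specialization and the non-special case handled by arithmetic equidistribution. Concretely, view $f,g$ as degree-$d$ rational maps over $k$ and apply, over $\bar{k}$, the alternative quoted in the introduction \cite{BD:preperiodic,Yuan:Zhang:II}: either $\Preper_{\bar{k}}(f)=\Preper_{\bar{k}}(g)$, or $\mathcal{P}:=\Preper_{\bar{k}}(f)\cap\Preper_{\bar{k}}(g)$ is finite, of size $N_0=N_0(C)$. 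If one of $f,g$ is exceptional over $\bar{k}$ --- that is, $f_t$ or $g_t$ is conjugate to a Latt\`es, power, or Chebyshev map for every $t$ --- then Theorem~\ref{Lattes theorem intro} in the Latt\`es case (applied fiberwise), or a classical direct argument in the power/Chebyshev case, already yields the uniform conclusion; so we may assume $f$ and $g$ are both non-exceptional. If $\Preper_{\bar{k}}(f)=\Preper_{\bar{k}}(g)$, then Levin--Przytycki \cite{Levin:Przytycki} provides a strong compositional relation between $f$ and $g$, and since $C\subseteq\Rat_d\times\Rat_d$ preserves degree $d$ under every specialization this relation and its accompanying M\"obius change of coordinates in $\PGL_2(k)$ specialize, giving $\Preper(f_t)=\Preper(g_t)$ for all but finitely many $t\in C(\C)$; the leftover $t$ give finitely many finite intersections, absorbed into $B$. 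Hence it remains to treat the case that $\mathcal{P}$ is finite.

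Assume $\mathcal{P}$ is finite, and put $\mathcal{Z}:=\{(t,x)\in C(\C)\times\P^1(\C):x\in\Preper(f_t)\cap\Preper(g_t)\}$, with Zariski closure $W$ inside the surface $C\times\P^1$. The key claim is that $W\neq C\times\P^1$. Granting this, $W$ is a finite union of curves and points. A component of $W$ dominating $C$ has generic point $\xi\in\P^1(\bar{k})$ with $\xi(t)\in\Preper(f_t)\cap\Preper(g_t)$ for infinitely many $t$; by the variation-of-canonical-height estimates of Call--Silverman and DeMarco--Mavraki --- this is where the hypothesis $C/\Qbar$ is used, after a spreading-out that reduces matters to $t\in C(\Qbar)$ --- one gets $\hat{h}_f(\xi)=\hat{h}_g(\xi)=0$, hence $\xi\in\mathcal{P}$. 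So the horizontal part of $W$ is the union of the finitely many multisections cut out by $\mathcal{P}$, of total degree $N_0'(C)$ over $C$; the vertical components $\{t_0\}\times\P^1$ occur for only finitely many $t_0$, and for each of these either $\Preper(f_{t_0})=\Preper(g_{t_0})$ or the intersection is finite (the same alternative once more); and the finitely many isolated points of $W$ contribute at most $N_1(C)$. For every remaining $t$ one has $\Preper(f_t)\cap\Preper(g_t)\subseteq W_t$, a set of size at most $N_0'(C)+N_1(C)=:B(C)$. Together with the special case, this proves the theorem.

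It remains to prove the key claim: if $\mathcal{P}$ is finite then $\mathcal{Z}$ is not Zariski dense in $C\times\P^1$. Suppose it were. Package the two dynamical Green functions into an adelic metrized line bundle $\overline{\mathcal{N}}=\hat{L}_f\oplus\hat{L}_g$ on $\P^1$ over the quasi-projective base $C$, in the sense of Yuan--Zhang \cite{Yuan:Zhang:quasiprojective}, with $\hat{L}_f,\hat{L}_g$ nef and with $\overline{\mathcal{N}}$-height zero exactly on $\mathcal{Z}$ and its closure. A Zariski-dense set of height-zero points forces, by the Yuan--Zhang arithmetic Hilbert--Samuel and equidistribution theorems, that $\overline{\mathcal{N}}$ is nef with vanishing top self-intersection; nefness of the two summands then gives $\hat{L}_f\cdot\hat{L}_g=0$, so the pairwise-bifurcation current of $(f,g)$ vanishes on $C$ and all non-archimedean pairwise contributions vanish as well. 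By the structure theory of the pairwise bifurcation current (Gauthier--Vigny \cite{Gauthier:Vigny}, together with the rigidity phenomena underlying Theorem~\ref{rigidity of special pair}), this degeneracy forces $\Preper_{\bar{k}}(f)=\Preper_{\bar{k}}(g)$, contradicting the finiteness of $\mathcal{P}$.

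The main obstacle is precisely this last step. Everything else is bookkeeping with heights and degrees, but the non-density of $\mathcal{Z}$ is a ``relative Manin--Mumford/Bogomolov'' statement for the diagonal of $\P^1\times\P^1$ over the curve $C$, and proving it requires genuinely merging the arithmetic equidistribution over quasi-projective bases with the complex-dynamical analysis of the pairwise bifurcation current. In particular one must rule out the ``weakly special'' behaviour of families such as $f_s=z^d$, $g_s=s^{d-1}z^d$ from the Remark, for which $\Preper(f_s)=\Preper(g_s)$ along a Zariski-dense set of $s$ even though the diagonal is not $(f,g)$-special over $\C(s)$; this is the reason the argument cannot be purely complex-analytic and must pass through the height machinery.
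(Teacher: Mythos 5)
This statement is imported from \cite{Mavraki:Schmidt}; the paper under review does not prove it, so I am comparing your argument against the strategy of that reference and against the analogous machinery the present paper builds for higher-dimensional bases (Theorems \ref{rbc} and \ref{uniform bound}, Lemma \ref{Bezout}). Your ``key claim'' --- that finiteness of $\mathcal{P}=\Preper_{\bar k}(f)\cap\Preper_{\bar k}(g)$ forces $\mathcal{Z}$ to be non-dense in $C\times\P^1$ --- is false, and in fact is directly contradicted by Theorem \ref{density result} of this paper with $\ell=m=1$: for \emph{every} maximally non-isotrivial one-parameter family (which any curve $C$ not contained in a single $\Aut\P^1$-orbit is), the common preperiodic points are Zariski dense in $C\times\P^1$, even though $\mathcal{P}$ is typically finite. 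The point you are missing is that ``fibers of $\mathcal{Z}$ over $C$ are uniformly bounded'' does not imply ``$\mathcal{Z}$ is contained in a proper closed subset'': a set meeting each fiber in at most $B$ points can still be Zariski dense in the surface. Consequently your entire decomposition of $W=\overline{\mathcal{Z}}$ into horizontal multisections cut out by $\mathcal{P}$, vertical fibers, and isolated points never gets off the ground, because $W=C\times\P^1$. The correct formulation --- both in \cite{Mavraki:Schmidt} and in this paper --- is at the level of the fiber \emph{square}: one proves that pairs $(t,x_1,x_2)$ of common preperiodic points are not Zariski dense in $C\times\P^1\times\P^1$ (Theorem \ref{non-density result} with $m=1$), and then converts non-density of pairs into a uniform fiber bound off a finite subset of $C$ via the B\'ezout-type argument of Lemma \ref{Bezout}.

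Your mechanism for the degenerate case is also not a proof. The implication ``$\hat L_f\cdot\hat L_g=0$ (vanishing pairwise-bifurcation current on $C$) $\Rightarrow\Preper_{\bar k}(f)=\Preper_{\bar k}(g)$'' is precisely the hard arithmetic content of \cite[Theorems 4.1 and 4.3]{Mavraki:Schmidt}, proved there via the arithmetic Hodge index theorem; it does not follow from any ``structure theory'' in \cite{Gauthier:Vigny}, and Theorem \ref{rigidity of special pair} is irrelevant to it --- that theorem concerns only deformations of the single pair $(z^d,\zeta z^d)$ and is used in this paper to exhibit a rigid repeller on the full space $\Rat_d\times\Rat_d$, not to analyze an arbitrary curve $C$. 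Moreover the family $f_s=z^d$, $g_s=s^{d-1}z^d$, which you mention but do not actually handle, shows that any such chain of implications must explicitly exclude (or separately treat) pairs of power maps: there the common preperiodic points, and even pairs of them, are Zariski dense while $\mathcal{P}=\{0,\infty\}$, and equality of maximal measures does not yield equality of preperiodic sets. Your treatment of the ``special'' case (specializing the Levin--Przytycki relation) and the Call--Silverman height specialization for horizontal components are reasonable ingredients, but as assembled the proof does not establish the theorem.
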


In this article, we follow the strategy of \cite{Mavraki:Schmidt} to treat more general families.  Their approach uses an arithmetic equidistribution theorem of Yuan and Zhang \cite{Yuan:Zhang:quasiprojective} amongst other ingredients, allowing them to reduce the problem to a result of Levin-Przytycki on maps sharing a measure of maximal entropy \cite{Levin:Przytycki}. But in order to apply Yuan-Zhang's equidistribution theorem, the challenge is to prove that a {\em non-degeneracy} hypothesis is satisfied.  This non-degeneracy is defined in terms of a volume of a certain adelically metrized line bundle, and its positivity can be deduced from showing that a certain measure is nonzero \cite[Lemma 5.4.4]{Yuan:Zhang:quasiprojective}. In \cite{Mavraki:Schmidt} the authors relied on arithmetic ingredients to provide a characterization of this positivity for $1$-parameter families of products $(f,g)$ acting on $\P^1 \times \P^1$ \cite[Theorems 4.1 and 4.3]{Mavraki:Schmidt}. In contrast, to prove that this positivity condition is satisfied over the full space of pairs $\Rat_d\times\Rat_d$, we interpret it here as a notion of {\em dynamical stability}, inspired by the recent work of Gauthier and Vigny \cite{Gauthier:Vigny}. The non-degeneracy condition is then reduced to showing the positivity of the bifurcation measure $\mu_\Delta$ defined above in \eqref{bif measure 1}. We exhibit this positivity by mimicking the proofs that Misieurewicz maps lie in support of the (usual) bifurcation measure in the moduli space of maps of degree $d$ on $\P^1$ \cite{Buff:Epstein:PCF}, and the proofs of \cite[Proposition 3.7]{Berteloot:Bianchi:Dupont} and \cite[Lemma 4.8]{Gauthier:Vigny}.  Finally, the rigidity we rely upon to prove Theorem \ref{general bound} (stated as Theorem \ref{rigidity of special pair}), follows from a general treatment of monomial maps and symmetries of maps on $\P^1$. The analogous rigidity result we rely upon for Theorem \ref{Lattes theorem intro} follows by repeated applications of the main theorem in \cite{D:stableheight} on the stability of a family of maps $f$ on $\P^1$ equipped with a marked point.

\subsection{Outline} 
In Section \ref{background}, we provide some background on the dynamics of maps on $\P^1$, recalling the notion of exceptional map and the relation between the measures of maximal entropy and the preperiodic points of the map.  In Section \ref{likely intersection} we prove Theorem \ref{density result} and deduce Corollary \ref{density cor}. These results are presented as a consequence of the main theorem in \cite{D:stableheight}.   
For a proof of Theorem \ref{non-density result}, we follow the strategy of Mavraki-Schmidt in \cite{Mavraki:Schmidt}. To prove that the non-degeneracy assumption required to use the equidistribution result in \cite{Yuan:Zhang:quasiprojective} is satisfied, we use the complex-analytic tools -- of bifurcation currents and measures -- developed by Gauthier and Vigny in \cite{Gauthier:Vigny}. More precisely, in Section \ref{nondegeneracy:bifmeasure} we recall the definitions from \cite{Gauthier:Vigny} of a generalized bifurcation current associated to an arbitrary family of polarized dynamical systems and subvarieties. 
 In Proposition \ref{transverse} we show that certain rigid pre-repelling parameters are in the support of our (generalized) bifurcation measure, giving a criterion for non-degeneracy. 
 We complete the proof of Theorem \ref{non-density result} in Section \ref{unlikely intersection} and deduce its consequence towards uniform bounds in the number of common preperiodic points therein; see Theorem \ref{uniform bound}. 
In Section \ref{polynomials}, we include an explanation of how this method gives an alternative proof of Theorem \ref{quad}, using the special quadratic polynomial examples of \cite{Doyle:Hyde} and the results of Mavraki-Schmidt \cite{Mavraki:Schmidt}.  
In Section \ref{monomials}, we prove Theorem \ref{rigidity of special pair}, and we construct a rigid repeller, completing the proof of Theorem \ref{general bound}. 
In Section \ref{BFT section}, we study pairs $(f,g)$ where $f$ is a Latt\`es map and prove Theorem \ref{Lattes theorem intro}. 

\subsection{Acknowledgements}  We would like to thank Thomas Gauthier, Trevor Hyde, Harry Schmidt, and Gabriel Vigny for many helpful discussions during the preparation of this article.  The authors were supported by grants DMS-2050037 and DMS-2200981 from the National Science Foundation.

\bigskip
\section{Background on 1-dimensional dynamics}
\label{background}

In this section, we provide some important background information on the dynamics of maps $f: \P^1 \to \P^1$ defined over $\C$.  

\subsection{The measure of maximal entropy}  \label{mme}  For each rational map $f: \P^1\to \P^1$ of degree $d \geq 2$, there is a unique probability measure $\mu_f$ of maximal entropy.  Its support is equal to the Julia set of $f$, and it is characterized by the properties that it has no atoms, so $\mu_f(\{z\}) = 0$ for all $z \in \P^1(\C)$, and $\frac{1}{d}f^* \mu_f = \mu_f$, meaning that
	$$\frac{1}{d} \int_{\P^1} \left(\sum_{f(x)=y} \phi(x) \right) \, \mu_f(y)  = \int_{\P^1} \phi(x) \, \mu_f(x)$$
for all continuous functions $\phi$ on $\P^1$  \cite{Mane:unique, FLM, Lyubich:entropy}.

\subsection{Exceptional maps}
We say that a map $f : \P^1\to \P^1$ of degree $d\geq 2$ is {\bf exceptional} if it is the quotient of an affine transformation of $\C$; see \cite{Milnor:Lattes} for details.  Every exceptional $f$ is conjugate by an element of $\Aut \P^1 \iso \PSL_2\C$ to a power map $z^{\pm d}$, a Tchebyshev polynomial $\pm T_d$, or it is a {\bf Latt\`es map}, meaning that it is the quotient of a map on an elliptic curve. The exceptional maps are distinguished by properties of their measures $\mu_f$.  In each case, the Julia set $J(f)$ is a real submanifold of $\P^1(\C)$ (with boundary, in the case of the Tchebyshev polynomials), and the measure $\mu_f$ is absolutely continuous with respect to the Hausdorff measure on $J(f)$.  Zdunik proved the converse:  the exceptional maps are the only maps for which this absolute continuity can hold \cite{Zdunik}.  

\subsection{Preperiodic points and the maximal measure}  \label{preperiodic theorem}
It is well known that the preperiodic points of $f$ are uniformly distributed with respect to the measure $\mu_f$.  That is, defining discrete measures
	$$\mu_{n,m} = \frac{1}{d^n} \sum_{f^n(z)=f^m(z)}  \delta_z$$
in $\P^1$ for every pair of integers $n > m\geq 0$, then for any sequence $(n_k,m_k)$ of integers $n_k > m_k\geq 0$ with $\max\{n_k,m_k\} \to \infty$ as $k\to \infty$, the measures $\mu_{n_k, m_k}$ converge weakly to the measure $\mu_f$ \cite{Mane:unique, FLM, Lyubich:entropy}.  

But the preperiodic points determine the measure $\mu_f$ in a stronger sense, without ordering them by period or orbit length:

\begin{theorem} \cite{Levin:Przytycki, BD:preperiodic, Yuan:Zhang:II} \label{LPYZ} 
For any maps $f, g: \P^1\to \P^1$ of degrees $>1$ defined over $\C$, the following are equivalent:
\begin{enumerate} 
\item  $|\Preper(f) \cap \Preper(g)| = \infty$; and 
\item  $\Preper(f) = \Preper(g)$;
\end{enumerate}
and these conditions imply that 
\begin{enumerate}
\item[(3)]  $\mu_f = \mu_g$.
\end{enumerate}
Moreover, if at least one of $f$ or $g$ is not conjugate to a power map, then (3) is equivalent to (1) and (2).
\end{theorem}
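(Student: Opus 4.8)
The plan is to prove Theorem~\ref{LPYZ} by treating the three conditions in sequence, relying on results of Levin--Przytycki, Baker--DeMarco, and Yuan--Zhang that are already in the literature. The implication $(2) \Rightarrow (1)$ is trivial since $\Preper(f)$ is always an infinite set (it contains the periodic points of all periods). For the reverse direction $(1) \Rightarrow (2)$, and the implication $(2) \Rightarrow (3)$, I would invoke the arithmetic-dynamical equidistribution machinery: one builds the canonical adelic metrics (or, over $\C$, one works directly with the Green currents $T_f$ on $\P^1$ normalized so that $\mu_f = (\,T_f\,)^{\wedge 1}$ has total mass one) and applies the Fatou--Julia--type rigidity. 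Concretely, if $\Preper(f) \cap \Preper(g)$ is infinite, then (after passing to a number field or finitely generated field over which everything is defined, using a specialization/spreading-out argument) one produces an infinite sequence of points of small height for both $f$ and $g$; the Baker--DeMarco equidistribution theorem \cite{BD:preperiodic} then forces $\mu_f = \mu_g$, which is $(3)$. Once $\mu_f = \mu_g$, one knows $\Preper(f) = \supp$-related data coincide, but to get the exact equality $\Preper(f) = \Preper(g)$ one uses that a point is preperiodic for $f$ iff its forward orbit has finite intersection with $\supp \mu_f = J(f)$ in the appropriate height-theoretic sense; more precisely, preperiodicity is equivalent to having canonical height zero, and the canonical heights $\hat h_f$ and $\hat h_g$ have the same zero locus once the associated measures (equivalently, the local Green functions at every place) agree. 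This yields $(1) \Leftrightarrow (2)$ and $(2) \Rightarrow (3)$.

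The heart of the matter is the last sentence: the converse $(3) \Rightarrow (1)$ under the hypothesis that not both $f$ and $g$ are power maps. Here I would quote the classification theorem of Levin--Przytycki \cite{Levin:Przytycki}: if two rational maps $f, g$ of degree $>1$ on $\P^1$ share their measure of maximal entropy, then, up to replacing $f$ and $g$ by iterates, either they have a common iterate ($f^m = g^n$ for some $m,n$), or they share an invariant line field and are both Latt\`es maps, or they are both (conjugate to) power maps $z^{\pm d}$, or both Tchebyshev maps $\pm T_d$. In every one of these cases except the pure power-map case, one checks directly that $\Preper(f) = \Preper(g)$: for a common iterate this is immediate since $\Preper(f) = \Preper(f^m) = \Preper(g^n) = \Preper(g)$; for two Latt\`es maps coming from the same elliptic curve (which is what the shared invariant line field forces) the preperiodic sets are both the image of the torsion of $E$ under the Weierstrass $x$-coordinate, hence equal; and for two Tchebyshev maps $\pm T_d$, $\pm T_{d'}$ with the same measure one has $d = d'$ and the preperiodic sets both equal $[-2,2]\cap(\text{the relevant arithmetic set})$, again equal. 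Thus $\mu_f = \mu_g$ together with ``at least one is not a power map'' yields $\Preper(f) = \Preper(g)$, closing the loop.

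The step I expect to be the main obstacle is making the case analysis in the Latt\`es and exceptional cases fully rigorous, specifically verifying that $\mu_f = \mu_g$ genuinely forces the two maps to come from the \emph{same} underlying elliptic curve (respectively the same degree Tchebyshev map up to sign), so that their preperiodic sets literally coincide rather than merely being two dense subsets of the same real curve. This is where the hypothesis ``not both power maps'' is essential, because for $f = z^d$ and $g = z^{-d}$ (or, more subtly, $g = \zeta z^d$ for a root of unity $\zeta$) one has $\mu_f = \mu_g = $ Haar measure on the unit circle, yet $\Preper(f) \neq \Preper(g)$ in general --- and indeed the Remark following Theorem~\ref{rigidity of special pair} in the introduction flags exactly this phenomenon. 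I would handle the Latt\`es obstacle by recalling that the invariant line field of a Latt\`es map determines, and is determined by, the complex structure of $E$ together with the multiplier, and that $\mu_f$ records the pushforward of Haar measure on $E$; two such pushforwards agree only when the curves are isomorphic and the $x$-projections are projectively equivalent, which makes the torsion images coincide. The remaining verifications (the common-iterate case, the Tchebyshev case) are routine once the structural dichotomy of \cite{Levin:Przytycki} is in hand, so I would present them briefly and cite \cite{BD:preperiodic, Yuan:Zhang:II} for the arithmetic half.
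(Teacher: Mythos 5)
Your proposal follows essentially the same route as the paper: the trivial implication $(2)\Rightarrow(1)$, the arithmetic equidistribution results of Baker--DeMarco and Yuan--Zhang for $(1)\Rightarrow(2)\Rightarrow(3)$, and Levin--Przytycki together with a case analysis of exceptional maps (which the paper organizes via Zdunik's measure-theoretic characterization of exceptional maps) for the converse. One caution: the Levin--Przytycki theorem for non-exceptional maps with $\mu_f=\mu_g$ does \emph{not} produce a literal common iterate $f^m=g^n$; it gives the weaker compositional relation $(g^{-m}\circ g^{m})\circ g^{\ell m}=(f^{-n}\circ f^{n})\circ f^{kn}$ involving (possibly multivalued) inverse branches --- for instance $f(z)=z^2+c$ and $g(z)=-(z^2+c)$ share $\mu_f=\mu_g$ but have no common iterate --- so your deduction $\Preper(f)=\Preper(f^m)=\Preper(g^n)=\Preper(g)$ does not cover all non-exceptional pairs, and one must instead invoke \cite[Theorem A and Remark 2]{Levin:Przytycki} to pass from the compositional relation to equality of preperiodic sets. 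A second, harmless, inaccuracy: two Tchebyshev maps with the same maximal measure need not have the same degree ($\pm T_d$ and $\pm T_e$ all carry the equilibrium measure of the same interval), but since they all share one set of preperiodic points your conclusion in that case still stands.
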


\begin{remark} \label{LP remark}
The main theorem of \cite{Levin:Przytycki} states that if $\mu_f = \mu_g$, and if $f$ and $g$ are non-exceptional, then there exist iterates $f^n$ and $g^m$ and positive integers $\ell$ and $k$ so that 
	$$(g^{-m}\circ g^{m}) \circ g^{\ell m} = (f^{-n}\circ f^{n}) \circ f^{k n}$$
for some (possibly multi-valued) branches of the inverse $g^{-m}$ and $f^{-n}$.  It follows that $\Preper(f) = \Preper(g)$; see \cite[Theorem A and Remark 2]{Levin:Przytycki}.  
\end{remark}

As the equivalences of Theorem \ref{LPYZ} are not stated this way in the literature, we outline the proof ingredients.

\begin{proof}[Sketch proof of Theorem \ref{LPYZ}]
The implication that (2) $\implies$ (1) is immediate, because all maps of degree $>1$ have infinitely many distinct preperiodic points.  The implications (1) $\implies$ (2) $\implies$ (3) are proved in \cite[Theorem 1.2]{BD:preperiodic} and \cite[Theorems 1.3 and 1.4]{Yuan:Zhang:II}.  The key input is the equidistribution of points of small canonical height for a map $f: \P^1\to \P^1$, working over number fields or, more generally, fields that are finitely generated over $\Q$.  

If $f$ and $g$ are non-exceptional, then the implication (3) $\implies$ (2) is proved in \cite[Theorem A and Remark 2]{Levin:Przytycki}.  See Remark \ref{LP remark}.

It remains to carry out a case-by-case analysis of the measures for exceptional maps, appealing to Zdunik's characterization of exceptional maps by their measures in \cite{Zdunik}.

If $f$ is a Tchebyshev polynomial $\pm T_d$, its measure $\mu_f$ is supported on a closed interval.  If $\mu_g = \mu_f$, then $g$ must be equal to $\pm T_e$, where $e = \deg g$; indeed, we know that $g$ must be conjugate to $\pm T_e$, but the only $A \in \Aut \P^1$ for which $A_*\mu_g = \mu_g$ are $A(z) = \pm z$.  All of these maps have the same sets of preperiodic points.

If $f(z) = z^{\pm d}$, then $\mu_f$ is the uniform distribution on the unit circle, and $\mu_f = \mu_g$ implies that $g$ must also be a power map.  In this case, either $\Preper(f) = \Preper(g)$ or $\Preper(f) \cap \Preper(g) = \emptyset$; writing $g(z) = \alpha z^{\pm e}$ with $|\alpha|=1$ and depending on whether or not $\alpha$ is a root of unity.

Finally, suppose $f$ is a Latt\`es map.  Then the measure $\mu_f$ is the projection of the Haar measure from the associated elliptic curve.  In particular, the measure knows the branch points of this quotient map and the ramification degree at each point.  In other words, the measure $\mu_f$ uniquely determines the orbifold structure on the quotient Riemann sphere.  As such, it uniquely determines the isomorphism class of the elliptic curve over $\C$ and thus the set of preperiodic points of $f$, which is equal to the projection of the torsion points of the elliptic curve.  It follows that if $\mu_f = \mu_g$ for some $g$, then $g$ is a Latt\`es map from the same elliptic curve with the same set of preperiodic points.  See \cite{Milnor:Lattes} for more information on these Latt\`es maps.
\end{proof}

\begin{remark}
In \cite{Pakovich:general}, Pakovich proved that each $f \in \Rat_d$ of degree $d \geq 4$ with $2d-2$ distinct critical values satisfies 
	$$\{g \in \Rat_d: \mu_g = \mu_f\} = \{f\}.$$ 
As these form a Zariski dense and open subset of $\Rat_d$, this implies, when combined with Theorem \ref{LPYZ} for degrees $d \geq 4$, the set of pairs $(f,g)$ with $|\Preper(f) \cap \Preper(g)| = \infty$ is {\em not} Zariski dense in the space $\Rat_d\times\Rat_d$ of all pairs with $d\geq 4$.  
\end{remark}

\subsection{Why not periodic points?}  
It is reasonable to ask why we work with all preperiodic points and not the subset $\Per(f)$ of periodic points of $f$, which are also uniformly distributed with respect to $\mu_f$.  Of course, the uniform bound on $|\Preper(f)\cap \Preper(g)|$ in Theorem \ref{general bound} is stronger than a bound on $|\Per(f)\cap \Per(g)|$, but there are two underlying reasons for our focus on preperiodic points.  First, if we were to replace $\Preper(f)$ with $\Per(f)$, then the equivalences of Theorem \ref{LPYZ} would break down.  For example, if $f$ and $g$ are Latt\`es maps induced from $P\mapsto 2P$ and $P \mapsto 3P$ on the same elliptic curve via the same projection to $\P^1$, then $\mu_f = \mu_g$ with $|\Per(f) \cap \Per(g)| = \infty$ but $\Per(f) \not= \Per(f)$. 
 Also, there are many non-exceptional examples with $\mu_f = \mu_g$ but $|\Per(f) \cap \Per(g)| < \infty$, such as $f(z) = z^2 + c$ and $g(z) = -(z^2+c)$ for $c \in \C$.  On the other hand, it follows from the proof of \cite[Theorem A]{Levin:Przytycki} that if $\mu_f = \mu_g$ for non-exceptional $f$ and $g$, and if there exists just one common {\em repelling} periodic point $x \in \Per(f) \cap \Per(g)$, then $\Per(f) = \Per(g)$; see \cite[Theorem 1.5]{Ye:symmetries}.  A second reason is our method of proof and original motivation for this project.  For maps $f: \P^1\to \P^1$ defined over $\Qbar$, the preperiodic points for $f$ are precisely the points in $\P^1(\Qbar)$ for which the canonical height $\hat{h}_f$ vanishes \cite{Call:Silverman}.  Though somewhat hidden in this article, much of our analysis is, fundamentally, about properties of these height functions.

\subsection{Stability and the bifurcation current}  \label{stable subsection}
Let $\Lambda$ be a connected complex manifold and $f: \Lambda \times\P^1 \to \Lambda \times \P^1$ a holomorphic map defined by $(\lambda, z)\mapsto (\lambda, f_\lambda(z))$ where each $f_\lambda$ has degree $d \geq 2$.  The measures $\mu_{f_\lambda}$ can be packaged together into a positive $(1,1)$-current on the total space $\Lambda \times\P^1$ as follows.  Let $\omega$ be a smooth and positive $(1,1)$-form on $\P^1$ with $\int_{\P^1} \omega = 1$, and consider $p^*\omega$ on $\Lambda \times\P^1$, where $p: \Lambda \times \P^1 \to \P^1$ is the projection.  The {\bf dynamical Green current} for $f$ on $\Lambda\times\P^1$ is
\begin{equation} \label{Green}
	\hat{T}_f = \lim_{n\to\infty} \frac{1}{d^n} (f^n)^* (p^*\omega).
\end{equation}
Then $\hat{T}_f$ is a closed, positive $(1,1)$-current on $\Lambda \times \P^1$ with continuous potentials, and the slice current $\hat{T}_f|_{\{\lambda\}\times\P^1}$ coincides with the measure $\mu_{f_\lambda}$.   See, for example, \cite[\S3]{Dujardin:Favre:critical}.  If $\Lambda = \Rat_d$ is the space of all maps of degree $d$, then we simply denote this current by $\hat{T}$, as in the Introduction.

We say the family $f_\lambda$, for $\lambda \in \Lambda$, is {\bf stable} if the Julia sets of $f_\lambda$ are moving holomorphically with $\lambda$; see \cite{Mane:Sad:Sullivan, Lyubich:stability, McMullen:CDR} for background.  Following \cite{D:current, Dujardin:Favre:critical}, the {\bf bifurcation current} for $f$ is defined by 
\begin{equation} \label{T bif}
	T_{f, \mathrm{bif}} = (\pi_\Lambda)_* \left( \hat{T}_f \wedge [C] \right)
\end{equation}
where $\pi_\Lambda: \Lambda \times \P^1 \to \Lambda$ is the projection and $[C]$ is the current of integration along the critical locus of $f$; it is a closed and positive $(1,1)$-current on $\Lambda$ with continuous potentials.  In \cite{D:current}, it is proved that $T_{f, \mathrm{bif}} = 0$ if and only if the family is stable.

For {\em algebraic} families, namely where $\Lambda$ is a smooth quasiprojective complex algebraic variety, and $f$ is a morphism, McMullen proved in \cite{McMullen:families} that the family $\{f_\lambda: \lambda\in \Lambda\}$ is stable if and only if either $f$ is isotrivial or $f$ is a family of flexible Latt\`es maps.  (The family $f$ is {\bf isotrivial} if all $f_\lambda$ are conjugate by elements of $\Aut\P^1$.)  Specifically, McMullen proved that if $f$ is stable but not isotrivial, then each critical point of $f_\lambda$ is preperiodic for all $\lambda \in \Lambda$.  In \cite{Dujardin:Favre:critical}, Dujardin and Favre extended this result by studying the iterates of each critical point independently.  Namely, if $c: \Lambda \to \P^1$ parameterizes a critical point for $f_\lambda$, they introduced the current 
\begin{equation} \label{one cp}
	\hat{T}_{f,c} := (\pi_\Lambda)_* \left(\hat{T}_f \wedge [\Gamma_c]\right)
\end{equation}
on $\Lambda$, where $\Gamma_c \subset \Lambda \times \P^1$ is the graph of $c$, and they proved that $\hat{T}_{f,c} = 0$ if and only if $f$ is isotrivial or $c$ is persistently preperiodic for $f_\lambda$ \cite[Theorems 2.5 and 3.2]{Dujardin:Favre:critical}.  

\subsection{Stability of a marked point}
Assume that $\Lambda$ is a smooth quasiprojective complex algebraic variety.  Suppose that $a \in \P^1(k)$ is any point defined over the function field $k = \C(\Lambda)$ defining a holomorphic map $a: \Lambda \to\P^1$.  The pair $(f,a)$ is {\bf isotrivial} if both $f$ and $a$, after changing coordinates by M\"obius transformation defined over a finite extension of $k = \C(\Lambda)$, become independent of the parameter $\lambda$.  In other words, working over $\C$, the group $\Aut \P^1$ acts on pairs $(f,a) \in \Rat_d \times\P^1$ by $A\cdot (f,a) = (A\circ f \circ A^{-1}, A\circ a)$, and a pair $(f,a)$ defined over $k$ is isotrivial if the associated map $\Lambda \to (\Rat_d\times\P^1)/\Aut\P^1$ is constant.  

Similarly to \eqref{one cp}, we can define $\hat{T}_{f,a} := (\pi_\Lambda)_* \left(\hat{T}_f \wedge [\Gamma_a]\right)$ by intersecting the graph $\Gamma_a$ with $\hat{T}_f$ in $\Lambda\times \P^1$.   Then $\hat{T}_{f,a} = 0$ if and only if the pair $(f,a)$ is either isotrivial or persistently preperiodic \cite[Theorem 1.4]{D:stableheight}.  (Strictly speaking, the theorem there is only proved for $\Lambda$ of dimension 1, but it holds more generally, and it is not formulated in terms of the current $\hat{T}_{f,a}$.  The equivalence between the stability condition there and the vanishing of $\hat{T}_{f,a}$ is proved in \cite[Theorem 9.1]{D:lyap}.)  This characterization of stability was reproved and extended to a more general setting by Gauthier and Vigny in \cite{Gauthier:Vigny}.  We will need the following consequence:  

\begin{theorem}  \label{stable point} \cite{D:stableheight}  Suppose $\Lambda$ is a smooth, irreducible complex quasiprojective algebraic variety, and let $k = \C(\Lambda)$ be its function field.  Suppose that $f \in k(z)$ defines a holomorphic family of maps $f_\lambda: \P^1\to \P^1$ of degree $d\geq 2$ for $\lambda \in \Lambda$; fix $a \in \P^1(k)$ defining a holomorphic map $a: \Lambda \to \P^1$.  If the pair $(f,a)$ is neither isotrivial nor persistently preperiodic, then there exists $\lambda_0 \in \Lambda$ for which $a(\lambda_0)$ is preperiodic to a repelling cycle for $f_{\lambda_0}$.
\end{theorem}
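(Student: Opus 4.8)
The statement to prove is a clean consequence of the characterization recalled just above: the current $\hat T_{f,a}=(\pi_\Lambda)_*(\hat T_f\wedge[\Gamma_a])$ vanishes on $\Lambda$ precisely when $(f,a)$ is isotrivial or persistently preperiodic. The plan is to run the classical activity/Montel argument against this current. \emph{Step 1.} Since by hypothesis $(f,a)$ is neither isotrivial nor persistently preperiodic, $\hat T_{f,a}$ is a \emph{nonzero} closed positive $(1,1)$-current on $\Lambda$ with continuous local potentials. \emph{Step 2 (reduction to dimension one).} Embedding $\Lambda$ in a projective space, I would choose a general smooth curve $C$ meeting $\Lambda$ so that the restriction $\hat T_{f,a}|_C$ is again nonzero; this is possible because the trace measure of $\hat T_{f,a}$ is nonzero and a general curve slice of a nonzero closed positive $(1,1)$-current with continuous potential is nonzero (a routine pluripotential-theoretic slicing fact). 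Since $\hat T_f$ is built fiberwise by the limit \eqref{Green}, its restriction to $C\times\P^1$ is the analogous current $\hat T_{f|_C}$ for the restricted family, so by functoriality of the construction $\hat T_{f,a}|_C=\hat T_{f|_C,\,a|_C}$; in particular this current is nonzero, and then the cited characterization — now applied to $(f|_C,a|_C)$ over the curve $C$, where \cite{D:stableheight} applies directly — shows $(f|_C,a|_C)$ is itself neither isotrivial nor persistently preperiodic. Replacing $\Lambda$ by $C$, we may therefore assume $\dim_\C\Lambda=1$, so that $\hat T_{f,a}$ is a nonzero positive measure on a curve.

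\emph{Step 3 (an active parameter).} I would use the standard fact that $\hat T_{f,a}$ is supported on the activity locus of the marked point $a$: on an open set $U\subseteq\Lambda$ on which the sequence $\lambda\mapsto f_\lambda^{\,n}(a(\lambda))$, $n\geq 0$, is a normal family, the local potential of $\hat T_{f,a}$ is harmonic, so $\hat T_{f,a}|_U=0$ (compare \cite{Dujardin:Favre:critical}, \cite[Theorem 9.1]{D:lyap}). Hence any $\lambda_*\in\supp\hat T_{f,a}$ is \emph{active}: the family $\{\lambda\mapsto f_\lambda^{\,n}(a(\lambda))\}_{n\geq0}$ is not normal on any neighbourhood of $\lambda_*$. \emph{Step 4 (Montel).} Because $\deg f_{\lambda_*}\geq 2$, the Julia set of $f_{\lambda_*}$ is infinite with repelling periodic points dense in it; pick three distinct such points $w_1,w_2,w_3$. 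On a small disk $U\ni\lambda_*$ they continue holomorphically to maps $z_i\colon U\to\P^1$ with each $z_i(\lambda)$ a repelling periodic point of $f_\lambda$, and, after shrinking $U$, with $z_i(\lambda)\neq z_j(\lambda)$ for all $\lambda\in U$ and $i\neq j$. If we had $f_\lambda^{\,n}(a(\lambda))\neq z_i(\lambda)$ for every $n\geq0$, every $\lambda\in U$, and every $i$, then Montel's theorem applied with the three disjoint moving targets $z_1,z_2,z_3$ would force $\{\lambda\mapsto f_\lambda^{\,n}(a(\lambda))\}$ to be normal on $U$, contradicting activity of $\lambda_*$. So there exist $n\geq0$, $\lambda_0\in U$ and $i$ with $f_{\lambda_0}^{\,n}(a(\lambda_0))=z_i(\lambda_0)$; that is, $a(\lambda_0)$ is preperiodic to the repelling cycle of $z_i(\lambda_0)$ under $f_{\lambda_0}$, which (viewing $\lambda_0$ as a point of the original $\Lambda$) is the assertion.

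\emph{Expected main obstacle.} All of the substance is imported in the characterization of \cite{D:stableheight} (via \cite[Theorem 9.1]{D:lyap}); of the remaining steps the one needing genuine care is Step 2 — confirming that a generic curve slice of $\hat T_{f,a}$ stays nonzero \emph{and} that the restricted pair is genuinely non-isotrivial and not persistently preperiodic — together with the ``passive $\Rightarrow$ current vanishes'' direction used in Step 3. The Montel argument in Step 4 is routine; in fact one could instead simply invoke the known description of $\supp\hat T_{f,a}$ as the closure of the set of parameters at which $a$ is strictly preperiodic to a repelling cycle (\cite{Dujardin:Favre:critical}, \cite{D:lyap}, \cite{Gauthier:Vigny}) and take any such parameter.
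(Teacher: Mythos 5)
Your proof is correct in its endgame and reaches the same conclusion the paper does, but the reduction to a one-dimensional base is done by a genuinely different route, and that is where the two arguments should be compared. The paper never touches the current $\hat{T}_{f,a}$ in its proof: it reduces to a curve purely algebro-geometrically, using irreducibility of $\Lambda$ to find, through any point, a curve $C$ along which $(f,a)$ is non-isotrivial, and then a countability/interpolation argument to rule out that $(f,a)$ is persistently preperiodic along every such curve; only then does it invoke \cite[Theorem 1.4]{D:stableheight} (non-normality of $\lambda\mapsto f_\lambda^n(a(\lambda))$) and the Montel argument of \cite[Proposition 5.1]{D:stableheight}, which is exactly your Steps 3--4. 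Your Steps 1--2 instead pass through the nonvanishing of $\hat{T}_{f,a}$ on the higher-dimensional $\Lambda$ and a generic-slicing lemma for closed positive $(1,1)$-currents. Two caveats there. First, the implication ``not isotrivial and not persistently preperiodic $\Rightarrow\hat{T}_{f,a}\neq 0$'' in dimension $>1$ is precisely the statement the paper flags as being proved in the literature only for one-dimensional bases; the natural way to establish it in higher dimensions is the paper's curve reduction itself, so your Step 1 risks a circularity (or at least imports an input that is not cleanly on the shelf). Second, the slicing fact you call routine does need care on a non-compact quasiprojective base: you want a Crofton-type averaging of the trace measure over an algebraic family of curves, and you must also arrange that the chosen curve stays inside $\Lambda$ and that $\hat{T}_{f,a}|_C=\hat{T}_{f|_C,a|_C}$ (the latter is fine because the potentials are continuous). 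What your approach buys, if one grants the higher-dimensional characterization, is a soft, coordinate-free reduction; what the paper's approach buys is independence from any current-theoretic input in dimension $>1$, which is why it is the one that actually closes the logical loop. I would recommend replacing Steps 1--2 by the algebraic reduction (or at least proving the nonvanishing of $\hat{T}_{f,a}$ on $\Lambda$ by first finding the curve), after which your Steps 3--4 are exactly the paper's argument.
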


\begin{proof}
The hypothesis that $(f,a)$ is neither isotrivial nor persistently preperiodic on $\Lambda$ implies we can find an algebraic curve $C$ in $\Lambda$ along which $(f,a)$ is neither isotrivial nor persistently preperiodic.  Indeed, if $(f,a)$ is not isotrivial, then the associated map $\Lambda \to (\Rat_d \times\P^1)/\Aut\P^1$ is non-constant; by the irreducibility of $\Lambda$, every $\lambda \in \Lambda$ is contained in some algebraic curve along which $(f,a)$ is not isotrivial. But if the pair is persistently preperiodic along all such curves, then the pair would be preperiodic on all of $\Lambda$.  Note, moreover, that if $(f,a)$ is neither isotrivial nor persistently preperiodic on a curve $C$, then it is also the case on the complement of any finite set of points in $C$.  So it suffices to prove the result for a smooth and quasiprojective curve $\Lambda$.     

Now assume that $\Lambda$ is a smooth, quasiprojective algebraic curve defined over $\C$.  From \cite[Theorem 1.4]{D:stableheight}, the hypothesis that $(f,a)$ is neither isotrivial nor persistently preperiodic implies that the sequence of holomorphic functions $\{\lambda \mapsto f_\lambda^n(a(\lambda))\}_{n\geq 0}$ fails to be normal on $\Lambda$.  Thus, as a consequence of Montel's theorem, there must be a parameter $\lambda_0 \in U$ and positive integer $n_0$ so that $f_{\lambda_0}^{n_0}(a(\lambda_0))$ is a repelling periodic point for $f_{\lambda_0}$; see \cite[Proposition 5.1]{D:stableheight}.  \end{proof}

\bigskip
\section{Zariski density of preperiodic points}\label{likely intersection}
\label{density}

In this section we prove Theorem \ref{density result}, restated here as Theorem \ref{Zariski dense}.

Throughout, we assume that $S$ is a smooth and irreducible quasiprojective variety over $\C$. Let $k = \C(S)$ be its function field.   An {\bf algebraic family of pairs} $(f,g)$ over $S$ is a pair of rational functions $f, g \in k(z)$ for which $f$ and $g$ each induce holomorphic maps $S \times \P^1 \to \P^1$ via specialization $(s, z) \mapsto f_s(z)$.  The pair $(f,g)$ induces a holomorphic map we denote by 
	$$\Phi = (f,g) :  S \times (\P^1\times\P^1) \to S \times (\P^1\times\P^1)$$
given by $(s, x,y) \mapsto (s, f_s(x), g_s(y))$.  We say that $\Phi = (f,g)$ has {\bf degree} $d \geq 2$ if $f_s$ and $g_s$ are both of degree $d$ for each $t\in S$.   

Recall that $f \in k(z)$ is {\bf isotrivial} if the induced map $S \to \Rat_d$ given by $s \mapsto f_s$ has constant image in the quotient space $\M_d = \Rat_d/\Aut\P^1$.  Equivalently, there exists a finite extension $k'$ of $k$ and a linear fractional transformation $B$ defined over $k'$ so that $B \circ f \circ B^{-1}$ is in $\C(z)$.   An algebraic family of pairs $(f,g)$ is {\bf isotrivial} if the induced map $S \to \Rat_d \times \Rat_d$ is constant when passing to the quotient space $(\Rat_d\times\Rat_d)/\Aut \P^1$.  Here, $\Aut\P^1 \iso \PSL_2\C$ is acting diagonally by conjugation, so that $A\cdot (f,g) = (A\circ f\circ A^{-1}, A\circ g\circ A^{-1})$ and $\dim \; (\Rat_d\times\Rat_d)/\Aut\P^1 = 4d-1$.  We say that an algebraic family of pairs $\Phi = (f,g)$ over $S$ is {\bf maximally nonisotrivial} if the family determines a {\em finite} map from $S$ to $(\Rat_d\times\Rat_d)/\Aut\P^1$.  

For each integer $\ell\geq 1$, we let $\Phi^{(\ell)}$ denote the map on $S \times (\P^1\times\P^1)^\ell$ given by the product action of $\Phi$ on the fiber power.  

\begin{theorem} \label{Zariski dense}
Suppose $\Phi$ is a maximally nonisotrivial algebraic family of pairs over $S$, of degree $d \geq 2$, and let $\Delta \subset \P^1\times\P^1$ be the diagonal.  The preperiodic points of $\Phi^{(\ell)}$ in $S\times \Delta^{\ell}$ form a Zariski dense subset of $S \times \Delta^\ell$, for every $1 \leq \ell \leq \dim S$. 
\end{theorem}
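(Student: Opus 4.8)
The preperiodic points of $\Phi^{(\ell)}$ in $S\times\Delta^\ell$ are, under the identification $\Delta\cong\P^1$, exactly the tuples $(s,x_1,\dots,x_\ell)$ with $x_i\in\Preper(f_s)\cap\Preper(g_s)$ for all $i$; call this set $Z_\ell$. The plan is to induct on $\ell$, the case $\ell=0$ being trivial. So fix $1\le\ell\le m:=\dim S$, assume the result for $\ell-1$ over every base, and for $W\subseteq S\times\P^1$ write $a_W\colon W\to\P^1$ for the tautological marked point (so a point of $W$ lies over $Z_1$ iff $a_W$ is preperiodic there for both $f$ and $g$). The key reduction I would establish is this: \emph{for every irreducible component $W$ of a hypersurface $\{(s,x)\in S\times\P^1:f_s^n(x)=x\}$ $(n\ge1)$ that dominates $S$ and for which $(g|_W,a_W)$ is not isotrivial, one has $\overline{Z_\ell}\supseteq W\times(\P^1)^{\ell-1}$ inside $S\times(\P^1)^\ell=S\times\P^1\times(\P^1)^{\ell-1}$.} Granting this, summing over all such $W$ gives $\overline{Z_\ell}\supseteq\big(\bigcup_W W\big)\times(\P^1)^{\ell-1}$, and it then suffices to prove that $\bigcup_W W$ is Zariski dense in $S\times\P^1$, for then $\overline{Z_\ell}\supseteq(S\times\P^1)\times(\P^1)^{\ell-1}=S\times(\P^1)^\ell$.

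For the density of $\bigcup_W W$ I would distinguish two cases. If $g$ is not isotrivial over $S$, then $(g|_W,a_W)$ is automatically nonisotrivial for \emph{every} dominant component $W$ (isotriviality of the pair would force that of $g$), and the union of all dominant components of all $\{f^n(x)=x\}$ meets a generic fibre of $S\times\P^1\to S$ in $\Per(f_s)$, which is dense in $\P^1$; hence $\bigcup_W W$ is dense. If $g$ is isotrivial over $S$, I would first replace $S$ by a finite cover and conjugate the family by a M\"obius family over it so that $g$ becomes a constant map $g_0$; this is harmless, since $Z_\ell$ over $S$ is then the image of $Z_\ell$ over the cover under a finite surjective morphism. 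Maximal nonisotriviality forces $s\mapsto f_s$ to be nonconstant, and for constant $g_0$ the pair $(g_0,a)$ is isotrivial precisely when $a$ is a constant section; so the only components $W$ that get excluded are the constant sections $s\mapsto c$ with $c$ persistently periodic for $f$. There are only finitely many of these: otherwise $\bigcap_{s\in S}\Preper(f_s)$ would be infinite, forcing $\Preper(f_s)=\Preper(f_{s'})$ for all $s,s'$ by Theorem~\ref{LPYZ}, hence $\mu_{f_s}$ independent of $s$ — impossible for a nonconstant algebraic family of degree-$d$ maps (a consequence of the classification of stable families together with the rigidity of the symmetry groups and measures of exceptional maps: an isotrivial family with constant measure has constant conjugating M\"obius data, and flexible Latt\`es families have varying measures). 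Removing these finitely many exceptional $W$ does not destroy density of the union on a generic fibre, so $\bigcup_W W$ is dense in $S\times\P^1$.

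It remains to carry out the key reduction for a fixed $W$ as above (note $\dim W=m$). On $W$ the marked point $a_W$ is persistently $f$-preperiodic and $(g|_W,a_W)$ is not isotrivial. I would show $D_W:=\{p\in W:a_W(p)\in\Preper(g_p)\}$ is Zariski dense in $W$: if $(g|_W,a_W)$ is persistently $g$-preperiodic this is automatic, and otherwise, for every nonempty Zariski-open $W^\circ\subseteq W$ the restriction $(g|_{W^\circ},a_W|_{W^\circ})$ is again neither isotrivial nor persistently preperiodic, so Theorem~\ref{stable point} yields a point of $W^\circ$ at which $a_W$ is preperiodic to a repelling cycle of $g$ (in particular lies in $\Preper(g)$); thus $D_W$ meets every nonempty Zariski-open subset of $W$. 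Writing $D_W=\bigcup_{n'>m'\ge0}\big(W\cap\{g^{n'}(x)=g^{m'}(x)\}\big)$, I would choose for each relevant $(n',m')$ an irreducible component $W'$ dominating its image in $S$; then $\dim W'\ge m-1\ge\ell-1$, the $\P^1$-coordinate on $W'$ is a common preperiodic point of $(f,g)$, and $(f,g)|_{W'}$ is maximally nonisotrivial (after shrinking $W'$ so its map to $(\Rat_d\times\Rat_d)/\Aut\P^1$ is finite, which does not affect density). The inductive hypothesis for $\ell-1$ applied to $(f,g)|_{W'}$ gives that the common preperiodic $(\ell-1)$-tuples over $W'$ are Zariski dense in $W'\times(\P^1)^{\ell-1}$; under the inclusion $W'\times(\P^1)^{\ell-1}\hookrightarrow S\times(\P^1)^\ell$ this set lands in $Z_\ell$, so $\overline{Z_\ell}\supseteq\overline{W'}\times(\P^1)^{\ell-1}$. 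Taking the union over the chosen $W'$ and using $\overline{D_W}=W$ together with $\overline{A\times B}=\overline A\times\overline B$ yields $\overline{Z_\ell}\supseteq W\times(\P^1)^{\ell-1}$, as needed.

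The step I expect to be the main obstacle is the density of $\bigcup_W W$, specifically the control of the exceptional components in the isotrivial case — at bottom, the complex-dynamical input that a nonconstant algebraic family of rational maps of degree $d$ has only finitely many points that are preperiodic at every parameter (equivalently, cannot have a constant measure of maximal entropy). The rest — the passage to finite covers, extraction of dominant components, the product formula for Zariski closures, and the observation that a union of subvarieties dominating $S$ and meeting a generic fibre in a dense set is Zariski dense — should be routine once Theorems~\ref{LPYZ} and~\ref{stable point} are in hand.
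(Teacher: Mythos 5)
Your argument runs on the same engine as the paper's: periodic hypersurfaces $W$ of $f$ serve as marked points, Theorem \ref{stable point} applied to the pairs $(g|_W,a_W)$ produces a Zariski-dense set of loci where the marked point is preperiodic for both maps, and an induction on dimension finishes. The bookkeeping differs --- you induct on $\ell$ and assemble $\overline{Z_\ell}$ from products $W'\times(\P^1)^{\ell-1}$, while the paper exhibits a point of $Z_m$ inside every Zariski-open subset of $S\times(\P^1)^m$ by cutting the base down one codimension at a time --- but these are equivalent. The genuine divergence is in the degenerate case where the pairs $(g|_W,a_W)$ are isotrivial. Your Case 1 observation, that non-isotriviality of $g$ alone forces non-isotriviality of every $(g|_W,a_W)$ with $W$ dominant, is a clean simplification the paper does not make. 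In Case 2, though, the paper is more elementary: after making $g$ constant it fixes a single period $N>2d+1$ and notes that if \emph{every} period-$N$ point of $f$ were a constant section, an entire $N$-cycle would consist of constant points and $f$ would be pinned down by interpolation (\cite[Lemma 2.5]{D:stableheight}), contradicting non-isotriviality of the pair. You instead invoke the claim that a nonconstant algebraic family cannot have constant measure of maximal entropy. The claim is true, but your justification is the one step I would push back on: you appeal to the classification of \emph{stable} families, yet constancy of $\mu_{f_s}$ does not obviously yield stability, so McMullen's theorem is not directly applicable. The correct route is the finiteness of $\{h\in\Rat_d:\mu_h=\mu\}$ for non-exceptional $\mu$ (Levin--Przytycki, Ye) combined with the exceptional-measure case analysis already in the paper's sketch of Theorem \ref{LPYZ}; the only infinite family there, $\{\alpha z^{\pm d}:|\alpha|=1\}$ up to a fixed conjugacy, is a totally real set and so contains no nonconstant holomorphic family. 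Either repair the step along those lines or swap in the interpolation argument.

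Two minor repairs: in decomposing $D_W$ you should keep \emph{all} irreducible components of $W\cap\{g^{n'}=g^{m'}\}$ (one component per pair $(n',m')$ need not cover $D_W$, which is all that guarantees $\overline{\bigcup W'}=W$); and you should restrict to smooth loci of $W$ and $W'$ before invoking Theorem \ref{stable point} and the inductive hypothesis, as the paper does when it replaces $S_1$ by its regular part. Neither affects the density conclusions.
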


\noindent
The key ingredient in the proof is the characterization of stability of marked points $(f,a)$ -- for an algebraic family of maps $f$ on $\P^1$ and holomorphic $a: S\to \P^1$ -- from \cite{D:stableheight}; see Theorem \ref{stable point}.

\subsection{Proof of Theorem \ref{Zariski dense} when dimension of $S$ is 1}  \label{dim 1} For simplicity, we first present the proof when $S$ is a curve.  

Let $\Omega$ be any Zariski-open subset of $S \times \P^1$.  It suffices to show there exists a single point $(s_0, z_0) \in \Omega$ for which $z_0 \in \Preper(f_{s_0}) \cap \Preper(g_{s_0})$.  Choose any irreducible, algebraic curve $P \subset S\times\P^1$ parameterizing a periodic point of $f$ that intersects $\Omega$.  Note that there are infinitely many choices of such curves, because $f_s$ has infinitely many periodic points for every $s \in S$.  In fact, there exists a choice of $s$ so that all periodic points of sufficiently large period for $f_s$ will lie in $\Omega$ (a fact that will be relevant to our argument). 
We may view the curve $P$ as the graph of a point in $\P^1(k')$ for some finite extension $k'$ of $k = \C(S)$.  Let $S'\to S$ denote a finite branched covering map so that $k' = \C(S')$.  By construction, the pair $(f,P)$ is persistently preperiodic over $S'$.

Now assume that the pair $(g, P)$ is not isotrivial.  If the pair $(g,P)$ is also persistently preperiodic for $g$, then we are done.  Otherwise, by Theorem \ref{stable point}, there exists $s_0\in S'$ at which $P_{s_0}$ is preperiodic for $g_{s_0}$.  This completes the proof under this assumption of non-isotriviality of $(g,P)$.

If $(g,P)$ is isotrivial, it is convenient to pass to a further finite branched cover $S' \to S$, if necessary, and change coordinates so that the family $g_s$ is independent of $s \in S'$ and the point $P_s$ is constant.  In these new coordinates, if the pair $(g,P)$ is persistently preperiodic, the proof is complete.  If the pair $(g,P)$ is isotrivial but with infinite orbit, we then repeat the argument with a different choice of curve $P$.  If the pair $(g,P)$ is isotrivial for all curves $P$ parameterizing points of a given large period for $f$, then each of these periodic points for $f$ is constant in these coordinates over $S'$.  In particular, by an interpolation argument, $f$ itself must be a constant family.  More precisely, choosing any period $N > 2d+1$, we would be able to find a set of distinct points $z_1, z_2, \ldots, z_N \in \P^1(\C)$ so that $f_s(z_i) = z_{i+1}$ for all $s \in S'$ and all $i = 1, \ldots, N-1$, and this would imply that the maps $f_s$ are constant in $s$ \cite[Lemma 2.5]{D:stableheight}.   In other words, the pair $(f,g)$ is isotrivial, violating the hypothesis.  This completes the proof.  

\subsection{Proof of Theorem \ref{Zariski dense} for any base $S$}
Let $m = \dim_\C S$.  Let $\Omega$ be any Zariski-open subset of $S \times (\P^1)^m$.  It suffices to show that there exists a single point $(s_0, z_1, z_2, \ldots, z_m) \in \Omega$ for which 
	$$z_i \in \Preper(f_{s_0})\cap \Preper(g_{s_0})$$
for all $i = 1, \ldots, m$.   Indeed, this shows Zariski density of the preperiodic points of $\Phi^{(m)}$ in $S \times \Delta^m$.  For $S \times \Delta^\ell$ with $\ell < m$, we observe that the projection, forgetting some factors of $\Delta$, will still be Zariski dense, which will complete the proof.  

The proof proceeds by induction on the dimension of $S$.

First let $P_1$ denote an irreducible subvariety in $S \times (\P^1)^m$ of codimension 1 having nontrivial intersection with $\Omega$, and for which $z_1$ is periodic for $f_s$ for all $(s, z_1, \ldots, z_m) \in P_1$.  As the periodic points of $f$ are Zariski dense in $S \times \P^1$, we can always find such a $P_1$.  Projecting $P_1$ to the $z_1$ coordinate, we may view this $P_1$ as single marked point defined on a finite (branched) cover $S'$ of $S$.  Now consider the pair $(g,P_1)$ over $S'$, abusing the notation slightly to identify $P_1$ with its projection.  If this pair is isotrivial over $S'$, then there is a change of coordinates (passing to a further finite branched cover of $S'$ if necessary) so that the pair is constant.  In this case, we select a different periodic point $P_1$ for $f$.  If all periodic points of large period for $f$ lead to isotrivial pairs for $g$, then we carry out an interpolation argument as in \S\ref{dim 1} to deduce that $f$ must also be constant in the new coordinates over $S'$.  In other words, the pair $(f,g)$ is isotrivial, a contradiction.

So we may assume that there exists a periodic point $P_1$ for $f$ so that $P_1 \cap \Omega \not= \emptyset$ and the pair $(g, P_1)$ is not isotrivial over $S$.  It follows from Theorem \ref{stable point} that the pair $(g,P_1)$ is either persistently preperiodic, in which case we let $x_1$ be any element of $P_1\cap \Omega$, or there exists a point $x_1 = (s_1, z_1, \ldots, z_m) \in P_1 \cap \Omega$ so that $z_1$ is preperiodic to a repelling cycle of $g_{s_1}$.  We then consider an irreducible subvariety $P_1' \subset P_1$ containing $x_1$ along which $z_1$ is persistently preperiodic for $g$.  Note that the codimension of $P_1'$ is at most 1 and its projection to $S$ will have codimension at most 1.  If the codimension is 1, we let $S_1$ be its projection to the base.  If this codimension is 0, we replace $P_1'$ with its intersection with $\pi^{-1}(S_1)$ for an arbitrarily-chosen irreducible subvariety $S_1$ of codimension 1 in $S$ passing through $s_1$.  Therefore, $P_1' $ has nonempty intersection with $\Omega$, the projection of $P_1'$ to the base $S$ has dimension $m-1$, and the $z_1$-coordinate of $P_1'$ is persistently preperiodic for both $f$ and $g$.  If $S_1$ is singular, we replace it with the regular part, so that it will be a smooth and irreducible quasiprojective variety of dimension $m-1$.

We now repeat the process, beginning with a subvariety $P_2$ of codimension 1 in $P_1'$, having nonempty intersection with $\Omega$, and for which the second coordinate $z_2$ is periodic for $f$ over all of $S_1$, with $z_2$ not identically equal to $z_1$ throughout $P_2$.  Projecting to the $z_2$-coordinate, we consider the associated pair $(g,P_2)$, passing to a further finite branched cover $S' \to S$ if needed.  If this pair is isotrivial, we replace $P_2$ with another choice of periodic point for $f$; as above, if all periodic points of sufficiently large period for $f$ lead to isotrivial pairs $(g,P_2)$, then the pair $(f,g)$ would be isotrivial along $S_1$.  Here we use the assumption that $(f,g)$ is maximally non-isotrivial, not just non-isotrivial.  So we can find a $P_2$ that has nonempty intersection with $\Omega$ and so that -- when projecting to the $z_2$-coordinate -- the pair $(f,P_2)$ is persistently periodic and the pair $(g,P_2)$ is not isotrivial. 

In this way, we inductively reduce the problem to the argument of \S\ref{dim 1}.  This completes the proof of Theorem \ref{Zariski dense}.


\bigskip
\section{Nondegeneracy and the bifurcation measure}\label{nondegeneracy:bifmeasure}
\label{measure}

In this section, we work in the more general setting of families of polarized dynamical systems over a complex quasiprojective variety $S$.  We review some important notions introduced in \cite{Gauthier:Vigny} and \cite{Yuan:Zhang:quasiprojective} and remind the reader of their relations to the non-degeneracy conditions introduced by Habegger \cite{Habegger:special} and studied by Gao \cite{Gao:generic} 
for subvarieties in families of abelian varieties. Finally, in \S\ref{repeller}, we establish a criterion for non-degeneracy for certain families of polarized dynamical systems and subvarieties.

\subsection{The bifurcation current and measure for families of endomorphisms}  \label{gen bif current}
Suppose that $S$ is a smooth and irreducible quasiprojective variety defined over $\C$.  A {\bf family of ($k$-dimensional) polarized dynamical systems} $(X\to S, \Phi, \mathcal{L})$ is given by a family of complex projective varieties $X\to S$, flat over $S$ with smooth fibers $X_s$  of dimension $k$ over each $s\in S$, a regular map $\Phi: X\to X$ that preserves the fibers $X_s$, and a relatively ample line bundle $\mathcal{L}$ on $X$ such that for each $s\in S$, we have $(\Phi|_{X_s})^{*}(\mathcal{L}|_{X_s})\iso (\mathcal{L}|_{X_s})^{\otimes d}$ for some $d>0$.

\begin{example}  \label{pair example}
 Let $\Phi = (f,g)$ be an algebraic family of pairs over $S$, of degree $d\geq 1$, as considered in the previous section.  Then $\Phi$ defines a family of 2-dimensional polarized dynamical systems. The degree $d$ is the degree of a polarization of $\Phi$, taking line bundle $L = p_1^*O(1)\otimes p_2^*O(1)$ on $S \times \P^1\times\P^1$, where $p_i: S\times \P^1\times\P^1 \to \P^1$ is the projection to the $i$-th factor of $\P^1$, $i = 1,2$.   
\end{example}

As explained for instance in \cite[\S2.3]{Gauthier:Vigny}, to such a family we can associate a \textbf{dynamical Green current}, denoted by $\hat{T}_{\Phi}$, as follows.  We let $\hat{\omega}$ be a smooth positive $(1,1)$-form on $X$ cohomologous to a multiple of $\mathcal{L}$ such that $\omega_s :=\hat{\omega}|_{X_{s}}$ is K\"ahler for all $s\in S$ and
\begin{align}\label{normalize omega}
\int_{X_{s}}\omega^k_{s}=1
\end{align}
for each $s\in S$, where $k = \dim X_s$.  The sequence $d^{-n}(\Phi^n)^{*}(\hat{\omega})$ converges weakly to a closed positive $(1,1)$-current $\hat{T}_{\Phi}$ with continuous potentials.

\begin{example}  \label{normalized pair}
For $X = S \times\P^1$ and family of maps $f$, the dynamical Green current of $f$ coincides with the current defined in \eqref{Green}, taking $\hat\omega$ to be $p^*\omega$ for the projection $p: S\times\P^1\to \P^1$.  If $\Phi = (f,g)$ is an algebraic family of pairs over $S$, polarized as in Example \ref{pair example}, then $\hat{T}_\Phi = \frac{1}{\sqrt{2}} \left(p_1^*\hat{T}_f + p_2^*\hat{T}_g\right)$, taking $\hat{\omega} = \frac{1}{\sqrt{2}} \left( p_1^* \hat\omega + p_2^*\hat\omega\right)$ with $p_i: S\times \P^1\times\P^1 \to S \times \P^1$ the projection to the product of $S$ with the $i$-th factor of $\P^1$, $i = 1,2$.  Note that the constant $1/\sqrt{2}$ comes from the normalization \eqref{normalize omega}.
\end{example}

Suppose $(X \to S, \Phi, \mathcal{L})$ is a family of $k$-dimensional polarized dynamical systems.  Suppose that $Y$ is closed subvariety of $X$ of codimension equal to $r$, defining a flat family over $S$. As defined in \cite{Gauthier:Vigny}, the {\bf bifurcation current} for the triple $(X, Y, \Phi)$ is defined by 
\begin{equation} \label{bif current 2}
	\hat{T}_{\Phi, Y} := \pi_* \left(\hat{T}_{\Phi}^{\wedge (k-r+1)} \wedge [Y]\right)
\end{equation} 
where $\pi:X \to S$ is the projection.  The {\bf bifurcation measure} is given by 
\begin{equation} \label{bif measure 2}
	\mu_{\Phi, Y} := \left(\hat{T}_{\Phi, Y}\right)^{\wedge (\dim S)}
\end{equation}
on $S$. The wedge powers are well defined because the current has continuous potentials.  Also as a consequence of having a continuous potential, the bifurcation measure $\mu_{\Phi, Y}$ does not charge pluripolar sets in $S$ \cite[Proposition 4.6.4]{Klimek}.

\begin{example}\label{bb:bifmeasure}  Suppose that $f$ is an algebraic family of maps on $\P^1$ of degree $d > 1$, parameterized by a smooth and irreducible $S$.  Let $Y$ be the critical locus of $f$ in $S \times \P^1$.  Then the bifurcation current $\hat{T}_{f, Y}$ coincides with the bifurcation current $\hat{T}_{f, \mathrm{bif}}$ defined above in \eqref{T bif}, introduced in \cite{D:current, Dujardin:Favre:critical}, and the bifurcation measure coincides with that of \cite{Bassanelli:Berteloot}.
\end{example}

\begin{example}  \label{pairwise bif current}
For any algebraic family of pairs $\Phi = (f,g)$ over a smooth and irreducible complex quasiprojective variety $S$, with projection $\pi:  S\times (\P^1)^2 \to S$, we let $X = S \times (\P^1\times\P^1)$ and set $Y  = S \times \Delta$, where $\Delta \subset \P^1 \times\P^1$ is the diagonal.  Then we refer to $\hat{T}_{\Phi, Y}$ as the  \textbf{pairwise-bifurcation current associated to $\Phi$} and denote it by $\hat{T}_{\Phi,\Delta}$.  In the notation of Example \ref{normalized pair}, we have 
\begin{eqnarray*} 
\hat{T}_{\Phi, \Delta} &=& \pi_* \left(\hat{T}_{\Phi}^{\wedge 2} \wedge [S \times \Delta]\right) \\
	&=& \pi_* \left( p_1^*\hat{T}_f \wedge p_2^* \hat{T}_g \wedge [S \times \Delta] \right) \\
	&=& \pi_*' \left( \hat{T}_f \wedge \hat{T}_g \right),
\end{eqnarray*}
where $\pi'$ denotes the projection from $S \times \P^1$ to $S$.  In particular, when $S$ is the total space $\mathrm{Rat}_d\times \mathrm{Rat}_d$, this agrees with the pairwise-bifurcation current $\hat{T}_{\Delta}$ defined in \eqref{bif current 1}.
 The {\bf pairwise-bifurcation measure} is defined as 
	$$\mu_{\Phi, \Delta} = \left(\hat{T}_{\Phi, \Delta}\right)^{\wedge (\dim S)}.$$
\end{example}

\subsection{Non-degeneracy and equidistribution}
Suppose $(X \to S, \Phi, \cL)$ is a family of $k$-dimensional polarized dynamical systems.  Suppose that $Y$ is a closed subvariety of $X$ of codimension equal to $r$, defining a flat family of subvarieties in $X$.  We say that the triple $(X, Y, \Phi)$ is {\bf non-degenerate} if the current 
	$$\hat{T}_{\Phi}^{\wedge (k-r + \dim S)} \wedge [Y]$$ 
is non-zero on $X$. This is an exact analog of the notion of non-degeneracy introduced by Habegger in \cite{Habegger:special} and studied in general by Gao \cite{Gao:generic} for subvarieties in families of abelian varieties, where the Betti form is replaced by $\hat{T}_{\Phi}$ on the total space $X$. This notion of non-degeneracy agrees with the one introduced by Yuan and Zhang \cite[\S 6.2.2]{Yuan:Zhang:quasiprojective} as they demonstrate in \cite[Lemma 5.4.4]{Yuan:Zhang:quasiprojective}.

For a family of hypersurfaces $Y$, Gauthier, Taflin, and Vigny recently observed a relation between the bifurcation measure and non-degeneracy on a fiber power of $X$ \cite{Gauthier:Vigny:new}; compare \cite[Lemma 4.1]{Yuan:uniform}.  For any integer $m\geq 1$, let 
	$$\Phi^{(m)}: X^{(m)} \to X^{(m)}$$ 
be the fiber power of $\Phi$ acting on the $m$-th fiber power of $X$ over $S$.  It is polarized by the line bundle $p_1^* \cL\otimes \cdots \otimes p_m^* \cL$, where $p_i: X^{(m)} \to X$ is the projection to the $i$-th factor.  Let $Y^{(m)}$ be the corresponding fiber power of $Y$ over $S$.  We continue to denote the projection to $S$ by $\pi$.  

\begin{prop} \cite{Gauthier:Vigny:new}  \label{GV measure}
Suppose $(X \to S, \Phi, \cL)$ is a family of $k$-dimensional polarized dynamical systems, and let $m = \dim S$.  Assume that $Y$ is a closed hypersurface in $X$, defining a flat family of hypersurfaces over $S$.  Then the bifurcation measure $\mu_{\Phi, Y}$ on $S$ satisfies
	$$\mu_{\Phi, Y} \; = \; \left(\hat{T}_{\Phi, Y}\right)^{\wedge m} \; =  \; \pi_* \left( \hat{T}_{\Phi^{(m)}}^{\wedge (mk)} \wedge [Y^{(m)}] \right).$$
In particular, the triple $(X^{(m)}, Y^{(m)}, \Phi^{(m)})$ is non-degenerate if and only if the bifurcation measure $\mu_{\Phi, Y}$ is nonzero.  
\end{prop}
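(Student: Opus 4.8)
The first identity $\mu_{\Phi,Y} = (\hat{T}_{\Phi,Y})^{\wedge m}$ is merely the definition \eqref{bif measure 2}, so the content is the second identity, and granting it the stated equivalence will be immediate. The plan is to recognize both sides as built from the single current $\beta := \hat{T}_\Phi^{\wedge k}\wedge[Y]$ on $X$ — well defined, with continuous local potentials, since $\hat{T}_\Phi$ has continuous potentials and $Y$ is flat over $S$ — and to trade the $m$-fold self-intersection over $S$ for a single intersection over the $m$-th fiber power. Write $p_i\colon X^{(m)}\to X$ for the $i$-th projection and keep $\pi$ for the structure maps to $S$. Two reductions rewrite the right-hand side: the fiber-power structure is transverse, so $[Y^{(m)}] = p_1^*[Y]\wedge\cdots\wedge p_m^*[Y]$; and since $\Phi^{(m)}$ acts as a product, one may take the normalizing form on $X^{(m)}$ to be $c_0^{-1}\sum_i p_i^*\hat\omega$ — this is the reason for the constant $1/\sqrt2$ in Example \ref{normalized pair} — so that passing to the limit in \eqref{Green} gives $\hat{T}_{\Phi^{(m)}} = c_0^{-1}\sum_i p_i^*\hat{T}_\Phi$, while \eqref{normalize omega} forces $c_0^{mk} = \binom{mk}{k,\ldots,k}$.

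The technical heart is a Fubini-type identity for the fiber power: for a closed positive $(k+1,k+1)$-current $\alpha$ on $X$ with $\pi_*\alpha$ having continuous local potentials, one should have
\[
\pi_*\bigl(p_1^*\alpha\wedge\cdots\wedge p_m^*\alpha\bigr)=(\pi_*\alpha)^{\wedge m}
\]
as top-degree currents on $S$. I would prove this by induction on $m$ via the Cartesian square coming from $X^{(m)} = X\times_S X^{(m-1)}$: writing $q_1\colon X^{(m)}\to X$ and $q_2\colon X^{(m)}\to X^{(m-1)}$ for the two projections, one has $p_1^*\alpha\wedge\cdots\wedge p_m^*\alpha = q_1^*\alpha\wedge q_2^*\Gamma$ with $\Gamma := (p^{(m-1)}_1)^*\alpha\wedge\cdots\wedge(p^{(m-1)}_{m-1})^*\alpha$ on $X^{(m-1)}$; the projection formula pulls $q_2^*\Gamma$ out of $(q_2)_*$, base change around the square identifies $(q_2)_*q_1^*\alpha$ with $\pi^*(\pi_*\alpha)$, and a second application of the projection formula together with the induction hypothesis closes the step. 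Taking $\alpha=\beta$ this gives $(\hat{T}_{\Phi,Y})^{\wedge m} = \pi_*\bigl(p_1^*\beta\wedge\cdots\wedge p_m^*\beta\bigr)$.

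It then remains to identify $p_1^*\beta\wedge\cdots\wedge p_m^*\beta$ with $\hat{T}_{\Phi^{(m)}}^{\wedge mk}\wedge[Y^{(m)}]$ as currents on $X^{(m)}$. Here I would use the standard fact that $\hat{T}_\Phi^{\wedge(k+1)} = 0$ on $X$: the Green current of a family of $k$-dimensional polarized systems carries no mass in bidegree exceeding the fiber dimension, as one checks by verifying that the mass of $d^{-(k+1)n}(\Phi^n)^*(\hat\omega^{\wedge(k+1)})$ tends to $0$ (the relevant cohomological eigenvalue of $\Phi^*$ being at most $d^k$, since the fibers have dimension $k$). Thus $\hat{T}_\Phi^{\wedge a}=0$ for all $a\ge k+1$, and since $p_i$ is flat so that $p_i^*$ commutes with Monge--Amp\`{e}re products, $(p_i^*\hat{T}_\Phi)^{\wedge a} = p_i^*(\hat{T}_\Phi^{\wedge a}) = 0$ for $a\ge k+1$. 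Hence, expanding $\hat{T}_{\Phi^{(m)}}^{\wedge mk} = c_0^{-mk}(\sum_i p_i^*\hat{T}_\Phi)^{\wedge mk}$ multinomially, every term with some exponent $\ge k+1$ drops; as the $m$ exponents sum to $mk$, only the balanced term survives, giving $\hat{T}_{\Phi^{(m)}}^{\wedge mk} = c_0^{-mk}\binom{mk}{k,\ldots,k}\,(p_1^*\hat{T}_\Phi)^{\wedge k}\wedge\cdots\wedge(p_m^*\hat{T}_\Phi)^{\wedge k} = (p_1^*\hat{T}_\Phi)^{\wedge k}\wedge\cdots\wedge(p_m^*\hat{T}_\Phi)^{\wedge k}$ by the choice of $c_0$. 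Wedging with $[Y^{(m)}] = p_1^*[Y]\wedge\cdots\wedge p_m^*[Y]$ and regrouping factor by factor yields $\hat{T}_{\Phi^{(m)}}^{\wedge mk}\wedge[Y^{(m)}] = p_1^*\beta\wedge\cdots\wedge p_m^*\beta$, so with the Fubini identity $\pi_*\bigl(\hat{T}_{\Phi^{(m)}}^{\wedge mk}\wedge[Y^{(m)}]\bigr) = (\hat{T}_{\Phi,Y})^{\wedge m} = \mu_{\Phi,Y}$, proving the asserted identity.

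Finally, the equivalence is formal: the fibers of $X^{(m)}\to S$ have dimension $mk$, the subvariety $Y^{(m)}$ has codimension $m$, and $\dim S = m$, so $mk-m+\dim S = mk$; thus $(X^{(m)},Y^{(m)},\Phi^{(m)})$ is non-degenerate precisely when $\hat{T}_{\Phi^{(m)}}^{\wedge mk}\wedge[Y^{(m)}]\neq 0$, which — this being a positive measure on $X^{(m)}$ and $\pi$ surjective — happens if and only if its pushforward $\mu_{\Phi,Y}$ is nonzero. The step I expect to be the main obstacle is the Fubini identity of the second paragraph and the current-theoretic care it rests on: one must verify that every wedge product occurring (on $X$, on the fiber powers, and in the inductive step) is well defined in the Bedford--Taylor sense, using continuity of the potentials of $\hat{T}_\Phi$, $\hat{T}_{\Phi,Y}$ and their pullbacks together with flatness of $Y/S$, and that the base-change identity $(q_2)_*q_1^*\alpha = \pi^*\pi_*\alpha$ is valid for such currents; the vanishing $\hat{T}_\Phi^{\wedge(k+1)}=0$, though standard, likewise needs a little attention to the compactification when estimating the masses above.
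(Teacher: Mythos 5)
Your proposal is correct and follows essentially the same route as the paper's proof: normalize the constant via \eqref{normalize omega}, use $\hat{T}_\Phi^{\wedge(k+1)}=0$ to kill all but the balanced term in the multinomial expansion of $\hat{T}_{\Phi^{(m)}}^{\wedge(mk)}$, and factor $[Y^{(m)}]$ as a wedge of pullbacks. The only difference is that you spell out the final pushforward ("Fubini") identity $\pi_*\bigl(p_1^*\beta\wedge\cdots\wedge p_m^*\beta\bigr)=(\pi_*\beta)^{\wedge m}$, which the paper compresses into "the conclusion follows"; your inductive projection-formula/base-change argument for it, and your mass estimate for $\hat{T}_\Phi^{\wedge(k+1)}=0$ via the topological degree $d^{nk}$ of $\Phi^n$ on fibers, are both sound.
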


\begin{proof}
The first equality is simply the definition of $\mu_{\Phi,Y}$, and we need to prove the second.  Since the dimension of each fiber of $X \to S$ is $k$, it follows that $\hat{T}_\Phi^{\wedge (k+1)} = 0$ on $X$.  Let $m = \dim S$, and note that the fibers of $X^{(m)}\to S$ have dimension $mk$ and $Y^{(m)}$ has codimension $m$.  Let $p_j: X^{(m)} \to X$ be the projection to the $j$-th factor.  Then there is a constant $C>0$ so that
\begin{eqnarray*}
\hat{T}_{\Phi^{(m)}}^{\wedge (mk)} &=&  C \; \left(p_1^* \hat{T}_\Phi + \cdots + p_m^* \hat{T}_\Phi\right)^{\wedge (mk)} \\
	&=& C \; \frac{(mk)!}{(k!)^m} \bigwedge_{j=1}^m \, p_j^* \hat{T}_\Phi^{\wedge k}.
\end{eqnarray*}
Because of the normalization that $\hat{T}_{\Phi^{(m)}}^{\wedge (mk)}$ be a probability measure on each slice over $t \in S$, we must take $C = \frac{(k!)^m}{(mk)!}$.  On the other hand, we have 
	$$[Y^{(m)}] \; = \; \bigwedge_{j=1}^m \, p_j^*[Y],$$
so that
$$\hat{T}_{\Phi^{(m)}}^{\wedge (mk)} \wedge [Y^{(m)}] \; = \; \bigwedge_{j=1}^m  \, p_j^* \left(\hat{T}_\Phi^{\wedge k} \wedge [Y] \right)$$
and the conclusion follows.
\end{proof}

Now assume that the triple $(X, Y, \Phi)$ is defined over a number field and is non-degenerate.  Yuan and Zhang recently proved an equidistribution theorem in \cite{Yuan:Zhang:quasiprojective} for points of small fiber-wise canonical height in $Y$, extending a result of K\"uhne \cite{Kuhne:UMM}.  A closely related result has also recently been obtained by Gauthier \cite{Gauthier:goodheights}.  
A sequence of points $y_n \in Y(\Qbar)$ is said to be {\bf generic} if no subsequence lies in a proper, Zariski-closed subset of $Y$. 

\begin{theorem} \cite[Theorem 6.2.3]{Yuan:Zhang:quasiprojective} \label{equidistribution}  Suppose $(X \to S, \Phi, \cL)$ is a family of $k$-dimensional polarized dynamical systems over smooth, irreducible, quasiprojective $S$, defined over a number field $K$.  Suppose that the triple $(X, Y, \Phi)$ is non-degenerate, where $Y$ is a closed subvariety of $X$ of codimension $r$, defining a flat family over $S$, and also defined over $K$. Then for any generic sequence of preperiodic points of $\Phi$ in $Y(\Kbar)$, their $\Gal(\Kbar/K)$-orbits are uniformly distributed with respect to the measure $\hat{T}_{\Phi}^{\wedge (k-r+\dim S)} \wedge [Y]$ on $X(\C)$.  More precisely, given any continuous function $\phi$ with compact support in $X$, and given any generic sequence $\{y_n\}$ of points in $Y(\Kbar)$ that are preperiodic for $\Phi$, we have 
	$$\frac{1}{\# \Gal(\Kbar/K)\cdot y_n} \sum_{y \in \Gal(\Kbar/K)\cdot y_n} \phi(y) \; \longrightarrow \; \frac{1}{\vol(Y)} \int_{X(\C)} \phi \; \left(\hat{T}_{\Phi}^{\wedge (k-r+\dim S)} \wedge [Y]\right),$$
as $n\to\infty$ where $\vol(Y) = \int_{X(\C)} \hat{T}_{\Phi}^{\wedge (k-r+\dim S)} \wedge [Y]$.
\end{theorem}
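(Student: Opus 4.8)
The plan is to follow the variational strategy of Szpiro--Ullmo--Zhang and Yuan that underlies all arithmetic equidistribution theorems, carried out in the framework of adelic line bundles over quasiprojective varieties developed in \cite{Yuan:Zhang:quasiprojective}; this is exactly \cite[Theorem 6.2.3]{Yuan:Zhang:quasiprojective}. First I would upgrade the polarization to a canonical adelic metric. The fiberwise relation $(\Phi|_{X_s})^*(\cL|_{X_s}) \iso (\cL|_{X_s})^{\otimes d}$, promoted --- after twisting $\cL$ by a suitable line bundle pulled back from $S$ --- to a global relation $\Phi^*\cL \iso \cL^{\otimes d}$, produces via Tate's telescoping limit a canonical adelically metrized line bundle $\hat\cL$ on $X$, now regarded as a quasiprojective variety over the number field $K$. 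Its archimedean curvature is precisely the dynamical Green current $\hat{T}_\Phi$, its local data at the other places assemble into the fiberwise canonical height $\hhat_\Phi$, and the restriction $\hat\cL|_Y$ has the property that $\hhat_\Phi(y)=0$ exactly when $y$ is preperiodic for $\Phi$. A recurring technical point --- in fact the reason the classical projective equidistribution theorem does not apply directly --- is that this canonical metric degenerates along any projective compactification of $S$, so one must work throughout with the quasiprojective adelic formalism, in which arithmetic intersection numbers, arithmetic volumes, and Chambert-Loir-type measures are defined as suitable limits.

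Next I would unpack the non-degeneracy hypothesis. By construction the geometric quantity $\vol(Y)=\int_{X(\C)}\hat{T}_\Phi^{\wedge(k-r+\dim S)}\wedge[Y]$ is the archimedean part of the top self-intersection of $\hat\cL|_Y$, so non-degeneracy says precisely that $\vol(Y)>0$. Together with the fact that $Y(\Kbar)$ contains preperiodic (hence height-zero, hence \emph{small}) points, this is what supplies the arithmetic positivity needed to run the variational machine: it forces the essential minimum of $\hhat_\Phi$ on $Y$ to vanish and the arithmetic volume of $\hat\cL|_Y$ to be nonnegative, and it plays the role, in this relative setting, of the bigness hypothesis in Yuan's projective equidistribution theorem.

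The heart of the argument is then the variational principle. Fix a continuous $\phi$ with compact support in $X(\C)$; by approximation I may assume $\phi$ smooth. Perturb the archimedean metric of $\hat\cL$ by $e^{-\epsilon\phi}$ to get $\hat\cL(\epsilon\phi)$; since bigness of an adelic line bundle is an open condition under perturbations of the metric, $\hat\cL(\epsilon\phi)|_Y$ stays big for $|\epsilon|$ small. Two analytic inputs then combine: differentiability of the arithmetic volume, $\frac{d}{d\epsilon}\big|_{0}\widehat{\vol}\big(\hat\cL(\epsilon\phi)|_Y\big)=(\dim Y+1)\int_{X(\C)}\phi\,\hat{T}_\Phi^{\wedge(k-r+\dim S)}\wedge[Y]$, obtained from Bedford--Taylor theory and the limiting construction of the arithmetic volume; and Zhang's lower bound $e_{\mathrm{ess}}\big(\hat\cL(\epsilon\phi)|_Y\big)\ge \widehat{\vol}\big(\hat\cL(\epsilon\phi)|_Y\big)\big/\big((\dim Y+1)\vol(Y)\big)$ for the essential minimum, where the geometric degree $\vol(Y)$ is unchanged by the perturbation. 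For a generic sequence $\{y_n\}$ of preperiodic points, genericity gives $\liminf_n\hhat_{\hat\cL(\epsilon\phi)}(y_n)\ge e_{\mathrm{ess}}(\hat\cL(\epsilon\phi)|_Y)$, while $\hhat_{\hat\cL}(y_n)=0$ and the metric perturbation contributes exactly the average of $\phi$ over the Galois orbit, so $\hhat_{\hat\cL(\epsilon\phi)}(y_n)=\frac{\epsilon}{\#\Gal(\Kbar/K)\cdot y_n}\sum_{y}\phi(y)$. Dividing by $\epsilon>0$, using $\widehat{\vol}(\hat\cL|_Y)\ge 0$ together with the two inputs, and letting $\epsilon\to 0^+$ yields
$$\liminf_{n}\,\frac{1}{\#\Gal(\Kbar/K)\cdot y_n}\sum_{y\in\Gal(\Kbar/K)\cdot y_n}\phi(y)\ \ge\ \frac{1}{\vol(Y)}\int_{X(\C)}\phi\,\hat{T}_\Phi^{\wedge(k-r+\dim S)}\wedge[Y];$$
applying the same inequality to $-\phi$ gives the reverse bound for the $\limsup$, and the asserted convergence follows.

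The hard part will be the two analytic inputs in the quasiprojective, non-proper setting: showing that the canonical adelic line bundle over $S$ has a well-behaved arithmetic volume --- strictly positive exactly under non-degeneracy and differentiable under smooth perturbation --- and that Zhang's essential-minimum inequality survives. Supplying these is the technical core of \cite{Yuan:Zhang:quasiprojective}: one builds the category of adelic line bundles over quasiprojective varieties and proves the attendant arithmetic bigness and volume estimates, extending Yuan's theorem on volumes of big arithmetic line bundles to the relative setting, the non-degeneracy hypothesis being precisely what guarantees enough positivity for those estimates to have content.
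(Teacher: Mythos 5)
This statement is not proved in the paper at all: it is imported verbatim as \cite[Theorem 6.2.3]{Yuan:Zhang:quasiprojective}, so there is no in-paper argument to compare against. Your sketch is a faithful high-level account of the variational (Szpiro--Ullmo--Zhang/Yuan) strategy that Yuan--Zhang actually use --- canonical adelic metrics from Tate's limit, non-degeneracy as positivity of the top self-intersection, metric perturbation by $e^{-\epsilon\phi}$, differentiability of the arithmetic volume, and Zhang's essential-minimum inequality --- and you correctly identify that the real content is the quasiprojective adelic line bundle formalism and the attendant bigness and volume estimates, all of which you (reasonably) defer to the cited reference, exactly as the paper does. In other words, your proposal matches the only available ``proof'' of this statement in context, namely the citation, together with an accurate outline of what lies behind it.
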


\subsection{A repelling-cycle criterion to show non-degeneracy}  \label{repeller}
Suppose $(\pi: X \to S, \Phi, \cL)$ is a family of $k$-dimensional polarized dynamical systems.  Now suppose that $Y$ is a closed subvariety of $X$ of codimension $m = \dim S$ in $X$, defining a flat family of subvarieties with codimension $m$.  We say that a point $y_0 \in Y(\C)$ is a {\bf rigid repeller} for $(X, Y, \Phi)$ if 
\begin{enumerate}
\item  some iterate $x_0 = \Phi^{n_0}(y_0)$ is a repelling periodic point for $\Phi_{s_0}$, where $s_0 = \pi(y_0)$;  
\item  the point $y_0$ lies in the support of the canonical measure $\mu_{s_0} := \left(\hat{T}_\Phi|_{X_{s_0}}\right)^k$ in the fiber $X_{s_0}$; and
\item there is a holomorphic section $\eta$ over a neighborhood of $s_0$ in $S$ parameterizing a repelling periodic point of $\Phi_s$, with $\eta(s_0) = x_0$, so that $x_0$ is an isolated point of the intersection of $\Phi^{n_0}(Y)$ with the image of $\eta$.
\end{enumerate}

\begin{remark} 
If $\Phi = (f,g)$ is an algebraic family of pairs over $S$, as defined in Section \ref{density} and Example \ref{pair example}, with $m = \dim S$, a rigid repeller for the fiber product $\Phi^{(m)}: X^{(m)} \to X^{(m)}$ in $Y  = S \times \Delta^m \subset X^{(m)}$, where $X=S\times\P^1\times\P^1$, coincides with the notion of the rigid $m$-repeller for $\Phi$ from the Introduction.
\end{remark}

The next proposition is a minor modification of \cite[Lemma 4.8]{Gauthier:Vigny}, and the proof is very similar to that of \cite[Proposition 3.7]{Berteloot:Bianchi:Dupont}.  

\begin{prop} \label{transverse}
Suppose $(X \to S, \Phi, \cL)$ is a family of $k$-dimensional polarized dynamical systems over quasiprojective $S$.  Suppose that $Y$ is a closed subvariety of $X$ of codimension equal to $\dim S$, defining a flat family over $S$.  Suppose there exists a rigid repeller for $(X, Y, \Phi)$ at $y_0 \in Y(\C)$.  Then $y_0$ lies in the support of the (nonzero) measure
	$$\left(\hat{T}_{\Phi}\right)^{\wedge k} \wedge [Y]$$
on $X(\C)$.  In particular, the triple $(X, Y, \Phi)$ is non-degenerate.
\end{prop}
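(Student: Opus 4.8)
The plan is to adapt the proof of \cite[Lemma 4.8]{Gauthier:Vigny} — which in turn follows \cite[Proposition 3.7]{Berteloot:Bianchi:Dupont} — to this relative setting; I indicate the steps and where I expect the real work to lie.

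\emph{Reduction to $n_0=0$.} Since $\hat{T}_\Phi$ has continuous local potentials, over the regular locus of $Y$ one has $\hat{T}_\Phi^{\wedge k}\wedge[Y]=\iota_*\big((\hat{T}_\Phi|_Y)^{\wedge k}\big)$, where $\iota\colon Y\hookrightarrow X$ and the Monge--Amp\`ere power of the restricted current is in the sense of Bedford--Taylor. I would set $Y'=\Phi^{n_0}(Y)$, again a codimension-$m$ subvariety through $x_0$, flat over $S$, with $\Phi^{n_0}\colon Y\to Y'$ finite and surjective. From $(\Phi^{n_0})^*\hat{T}_\Phi=d^{n_0}\hat{T}_\Phi$ one obtains $(\Phi^{n_0}|_Y)^*(\hat{T}_\Phi|_{Y'})=d^{n_0}\,\hat{T}_\Phi|_Y$, hence — as the Monge--Amp\`ere operator commutes with holomorphic pullback of currents with continuous potentials — $(\hat{T}_\Phi|_Y)^{\wedge k}=d^{-n_0 k}(\Phi^{n_0}|_Y)^*\big((\hat{T}_\Phi|_{Y'})^{\wedge k}\big)$. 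Thus the support of the left side is the $(\Phi^{n_0}|_Y)$-preimage of that of the right side, and $y_0$ lies in it if and only if $x_0=\Phi^{n_0}(y_0)$ lies in $\supp\big((\hat{T}_\Phi|_{Y'})^{\wedge k}\big)$. Since a repelling periodic point always lies in $\supp\mu_{s_0}$, the triple $(X,Y',\Phi)$ satisfies conditions (1)--(3) with $n_0=0$, so I may assume $y_0=x_0$ is a repelling periodic point, of some period $p$, for $\Phi_{s_0}$.

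\emph{Local model and mass bound.} Writing $g_s=\Phi_s^p$, over a neighbourhood $U$ of $s_0$ this is a holomorphic family of germs of biholomorphisms with repelling fixed point $\eta(s)$. I would work in a polydisk $W=U\times D\subset X$ around $x_0$ in which the fibers are the slices $\{t\}\times D$ and $\eta$ is the zero section $U\times\{0\}$. By the uniform expansion at the cycle, for each $n$ the iterate $\Phi^{pn}$ restricts to a biholomorphism of a polydisk $W_n=U\times D_n$, with $D_n\searrow\{0\}$, onto $W$, fixing the base coordinate and dilating the fiber directions by $\gtrsim\rho^{\,n}$ for a fixed $\rho>1$. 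Because the $W_n\cap Y$ form a neighbourhood basis of $x_0$ in $Y$, it suffices to show $(\hat{T}_\Phi|_Y)^{\wedge k}(W_n\cap Y)>0$ for all large $n$; by the self-reproducing identity $(\hat{T}_\Phi|_Y)^{\wedge k}=d^{-pnk}(\Phi^{pn}|_Y)^*\big((\hat{T}_\Phi|_{\Phi^{pn}(Y)})^{\wedge k}\big)$ and openness of the finite map $\Phi^{pn}|_Y$, this reduces to showing $x_0\in\supp\big((\hat{T}_\Phi|_{\Phi^{pn}(Y)})^{\wedge k}\big)$ for $n$ large. Here I would use hypothesis (3): since $x_0$ is the only point of $Y$ on the zero section — a proper, hence finite, intersection because $\operatorname{codim} Y=\dim S$ — the germ of $Y$ at $x_0$ obeys a lower bound $\|w\|\gtrsim\dist(t,s_0)^{\kappa}$, and the expanding iteration pushes the local pieces $\Phi^{pn}(Y\cap W_n)$ against the fiber $X_{s_0}$ so that $[\Phi^{pn}(Y)\cap W]$ converges to a positive multiple of $[X_{s_0}\cap W]$. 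Continuity of the wedge product with the continuous-potential current $\hat{T}_\Phi$ then gives $\hat{T}_\Phi^{\wedge k}\wedge[\Phi^{pn}(Y)\cap W]\to c\,\mu_{s_0}|_W$ with $c>0$; since $x_0\in\supp\mu_{s_0}$, lower semicontinuity of mass on open sets yields $x_0\in\supp\big((\hat{T}_\Phi|_{\Phi^{pn}(Y)})^{\wedge k}\big)$ for large $n$. Therefore $y_0=x_0\in\supp\big((\hat{T}_\Phi|_Y)^{\wedge k}\big)=\supp\big(\hat{T}_\Phi^{\wedge k}\wedge[Y]\big)$; in particular this measure is nonzero, i.e.\ $(X,Y,\Phi)$ is non-degenerate.

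\emph{Main obstacle.} The crux is the convergence $[\Phi^{pn}(Y)\cap W]\to c\,[X_{s_0}\cap W]$ with $c>0$: one must show that the transversal — not merely set-theoretic — collision of $Y$ with $\eta(U)$ at $x_0$, together with the expansion at the cycle, forces the iterated pieces to fill out the fiber $X_{s_0}$ in full codimension $m$, rather than accumulating on a lower-dimensional set or losing mass. This is exactly the content handled in \cite[Prop.\ 3.7]{Berteloot:Bianchi:Dupont} and \cite[Lemma 4.8]{Gauthier:Vigny} via the $\lambda$-lemma and a careful analysis of the holomorphically varying inverse branches at the repelling cycle; I expect the present formulation to require only cosmetic changes — replacing the critical hypersurface or marked point there by the flat family $Y$, and the individual marked-point Green functions by the single restricted current $\hat{T}_\Phi|_Y$ — beyond the reduction and bookkeeping recorded above.
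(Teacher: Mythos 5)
Your proposal is correct and follows essentially the same route as the paper: exploit the invariance $(\Phi^{p})^*\hat{T}_\Phi^{\wedge k}=d^{pk}\hat{T}_\Phi^{\wedge k}$ together with the vertical expansion at the repelling cycle to show that the pieces $\Phi^{n_0+np}(Y)$ converge in a tubular neighborhood of $\Gamma_\eta$ to a positive multiple of $[X_{s_0}]$ (this is exactly where hypothesis (3) enters, and the paper justifies it by the same support-theorem argument you defer to \cite{Berteloot:Bianchi:Dupont} and \cite{Gauthier:Vigny}), then use $x_0\in\supp\mu_{s_0}$ to get positive mass and pull back. The only organizational difference is that you perform the $n_0$-reduction at the outset via restricted Monge--Amp\`ere measures, whereas the paper handles the passage from $\Phi^{n_0}(Y)$ back to $Y$ at the end by pushing $[Y]$ forward under $\Phi^{n_0}$ on a small neighborhood $V$ of $y_0$; both are equivalent bookkeeping.
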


\begin{proof}  
Let $d \geq 2$ be the polarization degree of $\Phi$.  Let $x_0 = \Phi^{n_0}(y_0) \in \Phi^{n_0}(Y)$ be a repelling periodic point in the orbit of $y_0$.    Let $s_0 = \pi(x_0) \in S$.  Since $y_0$ is in the support of the canonical measure $\mu_{s_0} = \left(\hat{T}_\Phi|_{X_{s_0}}\right)^k$, it follows that $x_0$ is also in this support.   Let $p$ be the period of $x_0$.  Let $\eta$ denote the parameterization of the nearby repelling periodic points over a neighborhood $U$ of $s_0$, and let $\Gamma_\eta$ denote its image in $X$.  By hypothesis, $x_0$ is an isolated point of the intersection of $\Gamma_\eta$ with $\Phi^{n}(Y_0)$.

Shrinking $U$ if necessary, there exists a tubular neighborhood $N$ of $\Gamma_\eta$ in $\pi^{-1}(U)$ and a constant $K>1$ so that 
	$$d_{X_s}(\Phi_s^p(x), \Phi_s^p(\eta(s))) \geq K \, d_{X_s}(x, \eta(s))$$
for all $x \in N \cap X_s$ and all $s \in U$ and for any reasonable choice of distance function $d_{X_s}$ on the fibers.   In particular, there exists a nested sequence of tubular neighborhoods $N_n \subset N$ around $\Gamma_\eta \cap \pi^{-1}(U)$, for $n\geq 1$, so that $\Phi^{np}: N_n \to N$ is proper and one-to-one. Then for all integers $n\geq 0$, 
\begin{eqnarray*} 
d^{npk} \int_{N_n}  \hat{T}_\Phi^{\wedge k} \wedge [\Phi^{n_0}(Y)]   
	&=& \int_{N_n} (\Phi^{np})^* \hat{T}_\Phi^{\wedge k} \wedge [\Phi^{n_0}(Y)]  \\
	&=& \int_N \hat{T}_\Phi^{\wedge k} \wedge (\Phi^{np})_* [\Phi^{n_0}(Y)] \\
	&=& \int_N \hat{T}_\Phi^{\wedge k} \wedge [\Phi^{n_0+np}(Y)].
\end{eqnarray*}
On the other hand, we have 
	$$\lim_{n\to\infty} \; \chi_N \; [\Phi^{n_0+np}(Y)] \; =  \; \alpha \,  [X_{s_0}\cap N]$$
for some $\alpha>0$, in the weak sense of currents, where $\chi_N$ is the indicator function.  Indeed, since $N\cap \Phi^{n_0}(Y) \cap \Gamma_\eta = \{x_0\}$, the vertical expansion of $\Phi^p$ shows that the limit must be supported in the fiber $X_{s_0}$.  As the limit current is closed and positive and nonzero, it must be (a scalar multiple of) the current of integration along $X_{s_0}\cap N$.  Finally, since $x_0$ is in the support of the measure $\mu_{s_0} = \hat{T}_\Phi^{\wedge k}|_{X_{s_0}}$, we have
	$$\int_N  \hat{T}_\Phi^{\wedge k} \wedge [X_{s_0}\cap N]  \; > \; 0$$
so that 
	$$\int_N \hat{T}_\Phi^{\wedge k} \wedge [\Phi^{n_0+np}(Y)] \; > \; 0$$
for all sufficiently large $n$.

Now choose an open set $V$ around $y_0$ so that $\Phi^{n_0}: V \to N$ is proper.  Then 
\begin{eqnarray*} 
d^{n_0k} \int_V  \hat{T}_\Phi^{\wedge k} \wedge [Y]   
	&=& \int_V (\Phi^n)^* \hat{T}_\Phi^{\wedge k} \wedge [Y]  \\
	&=& \int_{N} \hat{T}_\Phi^{\wedge k} \wedge (\Phi^{n_0})_* [Y] \\
	&\geq& \int_{N} \hat{T}_\Phi^{\wedge k} \wedge [\Phi^{n_0}(Y)] \\
	&\geq& \int_{N_n}  \hat{T}_\Phi^{\wedge k} \wedge [\Phi^{n_0}(Y)]
\end{eqnarray*}
for all $n\geq 1$.  Therefore
	$$\int_V  \hat{T}_\Phi^{\wedge k} \wedge [Y]   \; > \; 0.$$
\end{proof}

Now let $S$ be a smooth and irreducible complex quasiprojective variety.  Recall that an algebraic family of pairs $\Phi = (f,g)$ was defined in Section \ref{density}, and the pairwise-bifurcation current was defined in Example \ref{pairwise bif current}.

\begin{cor} \label{our measure}
Let $S$ be a smooth and irreducible complex quasiprojective variety of dimension $m$.  Suppose that $\Phi = (f,g)$ is an algebraic family of pairs over $S$.  If there exists a rigid $m$-repeller at some parameter $s_0 \in S(\C)$, then the pairwise-bifurcation measure $\mu_{\Phi, \Delta}$ is nonzero on $S(\C)$.
\end{cor}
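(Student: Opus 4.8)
The plan is to deduce Corollary~\ref{our measure} from Proposition~\ref{transverse}, applied not to $\Phi$ itself but to its $m$-th fiber power, for which a rigid $m$-repeller of $\Phi$ is precisely a rigid repeller in the sense of \S\ref{repeller}.

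First I would set $X = S \times \P^1 \times \P^1$, so that $(X \to S, \Phi, \cL)$ is a family of $2$-dimensional polarized dynamical systems (Example~\ref{pair example}), and let $Y = S \times \Delta$, a flat family of hypersurfaces of codimension $1$ over $S$. Proposition~\ref{GV measure}, applied with $k = 2$ and $m = \dim S$, then identifies the pairwise-bifurcation measure:
\[
	\mu_{\Phi, \Delta} \;=\; \left(\hat{T}_{\Phi, \Delta}\right)^{\wedge m} \;=\; \pi_*\!\left( \hat{T}_{\Phi^{(m)}}^{\wedge 2m} \wedge [Y^{(m)}] \right),
\]
where $\Phi^{(m)}$ is the fiber power acting on $X^{(m)} = S \times (\P^1 \times \P^1)^m$, polarized by $p_1^*\cL \otimes \cdots \otimes p_m^*\cL$, and where $Y^{(m)} = S \times \Delta^m$ has codimension $m = \dim S$ in $X^{(m)}$. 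Thus it suffices to show that the positive measure $\hat{T}_{\Phi^{(m)}}^{\wedge 2m} \wedge [Y^{(m)}]$ is nonzero on $X^{(m)}(\C)$; its pushforward under the morphism $\pi : X^{(m)} \to S$ is then automatically nonzero, since $\pi_*\nu(S) = \nu(X^{(m)}) > 0$ for every nonzero positive measure $\nu$.

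To obtain this nonvanishing I would invoke Proposition~\ref{transverse} for the family $(X^{(m)} \to S, \Phi^{(m)}, p_1^*\cL \otimes \cdots \otimes p_m^*\cL)$ of $2m$-dimensional polarized dynamical systems, together with the codimension-$m$ subvariety $Y^{(m)}$. By the Remark following the definition of rigid repeller in \S\ref{repeller}, the hypothesis that $\Phi = (f,g)$ admits a rigid $m$-repeller at $s_0$ (in the sense of the Introduction) is exactly the statement that $(X^{(m)}, Y^{(m)}, \Phi^{(m)})$ admits a rigid repeller at the corresponding point $y_0 \in Y^{(m)}(\C)$: condition (1) of \S\ref{repeller} holds because each coordinate $x_i$ of $y_0$, being preperiodic to a repelling cycle of both $f_{s_0}$ and $g_{s_0}$, yields a repelling periodic point of $\Phi^{(m)}_{s_0}$ after a common iterate; condition (2) follows since such an $x_i$ lies in $J(f_{s_0}) \cap J(g_{s_0})$, so that $y_0$ lies in the support of the fiberwise canonical measure $\big(\hat{T}_{\Phi^{(m)}}|_{X^{(m)}_{s_0}}\big)^{\wedge 2m}$; and condition (3), the orbit-isolation requirement, becomes --- after passing to the graph of the holomorphic parametrization of the nearby repelling cycle --- exactly the non-existence of the disk $\phi$ in the definition of rigid $m$-repeller. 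Proposition~\ref{transverse} then yields $y_0 \in \supp\big(\hat{T}_{\Phi^{(m)}}^{\wedge 2m} \wedge [Y^{(m)}]\big)$, so this measure is nonzero, and the displayed formula gives $\mu_{\Phi, \Delta} \neq 0$ on $S(\C)$.

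The substance of the corollary has already been established in Propositions~\ref{GV measure} and \ref{transverse}, so I do not expect a serious obstacle here; the only point requiring genuine care is the bookkeeping that identifies the fiber-power construction with the two separate definitions of ``rigid repeller'', together with the routine verification that $X^{(m)} \to S$, being a product, satisfies the smoothness, flatness, and relative ampleness required of a family of polarized dynamical systems. Unlike the arithmetic inputs used elsewhere in the paper, the argument here is entirely complex-analytic.
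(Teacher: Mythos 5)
Your proposal is correct and follows exactly the paper's argument: identify the rigid $m$-repeller with a rigid repeller for the fiber power $\Phi^{(m)}$ in $S\times\Delta^m$ (the Remark in \S\ref{repeller}), apply Proposition \ref{transverse} to conclude $\hat{T}_{\Phi^{(m)}}^{\wedge 2m}\wedge[S\times\Delta^m]\neq 0$, and then use Proposition \ref{GV measure} to deduce $\mu_{\Phi,\Delta}\neq 0$. The extra detail you supply in verifying conditions (1)--(3) is accurate and only elaborates what the paper leaves implicit.
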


\begin{proof}  
As observed above, a rigid $m$-repeller for $\Phi$ implies there is a rigid repeller for the $m$-th fiber power $\Phi^{(m)}$ in $S \times \Delta^m \subset S \times (\P^1\times\P^1)^m$.  Proposition \ref{transverse} implies that $\left(\hat{T}_{\Phi^{(m)}}\right)^{\wedge 2m} \wedge [S \times \Delta^m]$ is nonzero.  Propsition \ref{GV measure} then implies that the measure $\mu_{\Phi, \Delta}$ is nonzero on $S$.  
\end{proof}

\bigskip
\section{Proof of Theorem \ref{non-density result}}\label{unlikely intersection}
We will deduce Theorem \ref{non-density result} from the following result, combined with the material of the previous section.  Recall that the pairwise-bifurcation measure $\mu_{\Phi, \Delta}$ was defined in Example \ref{pairwise bif current}.

\begin{theorem} \label{rbc}
Fix degree $d \geq 2$, and suppose that $S$ is a smooth, irreducible quasi-projective variety of dimension $m$ parameterizing an algebraic family of pairs $\Phi = (f,g)$ of degree $d\ge 2$ over $S$, all defined over $\Qbar$.  Assume that  $f$ and $g$ are not both conjugate to $z^{\pm d}$ over the algebraic closure of $k = \Qbar(S)$.  If the pairwise-bifurcation measure $\mu_{\Phi, \Delta}$ is non-zero on $S(\C)$, then the set of points 
$$\{(s, x_1, \ldots, x_{m+1}) \in (S \times (\P^1)^{m+1})(\Qbar):  x_i \in \Preper(f_s) \cap \Preper(g_s) \mbox{ for each } i\}$$
is {\em not} Zariski dense in $S \times (\P^1)^{m+1}$.  
\end{theorem}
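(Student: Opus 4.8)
The plan is to run the Mavraki--Schmidt \cite{Mavraki:Schmidt} strategy on an $(m+1)$-fold fiber power: the hypothesis $\mu_{\Phi,\Delta}\neq 0$ supplies exactly the non-degeneracy needed for the Yuan--Zhang equidistribution theorem (Theorem~\ref{equidistribution}), whose output is then fed into Theorem~\ref{LPYZ}. I argue by contradiction, assuming the displayed set is Zariski dense in $S\times(\P^1)^{m+1}$. Write $X=S\times\P^1\times\P^1$ and $\Phi=(f,g)$, polarized by $L$ as in Example~\ref{pair example}; let $(X^{(m+1)},\Phi^{(m+1)})$ be the $(m+1)$-st fiber power over $S$, polarized by $q_1^*L\otimes\cdots\otimes q_{m+1}^*L$ with $q_j:X^{(m+1)}\to X$ the projections, and set $Y=S\times\Delta^{m+1}\subset X^{(m+1)}$, a flat family over $S$ of codimension $m+1$. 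A point of $Y(\Qbar)$ is preperiodic for $\Phi^{(m+1)}$ exactly when it has the form $(s,x_1,x_1,\dots,x_{m+1},x_{m+1})$ with each $x_i\in\Preper(f_s)\cap\Preper(g_s)$; hence the displayed set is Zariski dense in $S\times(\P^1)^{m+1}$ if and only if the $\Phi^{(m+1)}$-preperiodic points of $Y$ are Zariski dense in $Y$, and this is what I now assume.

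Next I would check that $(X^{(m+1)},Y,\Phi^{(m+1)})$ is non-degenerate, by a cohomological computation in the spirit of Proposition~\ref{GV measure}. One has $\hat{T}_{\Phi^{(m+1)}}=c\sum_j q_j^*\hat{T}_\Phi$ and $[Y]=\bigwedge_j q_j^*[S\times\Delta]$. Since the fibers of $X\to S$ are surfaces, $\hat{T}_\Phi^{\wedge 3}=0$ on $X$, so expanding $\hat{T}_{\Phi^{(m+1)}}^{\wedge(2m+1)}\wedge[Y]$ multinomially leaves only the terms (up to a positive constant) $\sum_i q_i^*\!\bigl(\hat{T}_\Phi\wedge[S\times\Delta]\bigr)\wedge\bigwedge_{j\neq i}q_j^*\!\bigl(\hat{T}_\Phi^{\wedge 2}\wedge[S\times\Delta]\bigr)$, a top-degree current on $X^{(m+1)}$. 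Pushing each term down to $S$ and using Example~\ref{pairwise bif current}, so that $\pi_*(\hat{T}_\Phi^{\wedge2}\wedge[S\times\Delta])=\hat{T}_{\Phi,\Delta}$ while $\pi_*(\hat{T}_\Phi\wedge[S\times\Delta])$ is a positive constant, each term pushes to a positive multiple of $\hat{T}_{\Phi,\Delta}^{\wedge m}=\mu_{\Phi,\Delta}$, which is nonzero by hypothesis. Thus $\hat{T}_{\Phi^{(m+1)}}^{\wedge(2m+1)}\wedge[Y]\neq 0$, i.e.\ the triple is non-degenerate (equivalently, one may invoke Proposition~\ref{GV measure} and the equalities of Example~\ref{pairwise bif current} directly).

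Now apply Theorem~\ref{equidistribution} to a generic sequence of $\Phi^{(m+1)}$-preperiodic points in $Y(\Qbar)$ --- such a sequence exists by the assumed density --- to see that their Galois orbits equidistribute on $X^{(m+1)}(\C)$ toward the (nonzero) measure $\hat{T}_{\Phi^{(m+1)}}^{\wedge(2m+1)}\wedge[Y]$, which is carried by $Y\cong S\times(\P^1)^{m+1}$ and equals the canonical measure of the restriction of $\Phi^{(m+1)}$ to $Y$. Each point of the sequence simultaneously has vanishing fiberwise canonical height for the "$f$-system" $f^{(m+1)}$ and the "$g$-system" $g^{(m+1)}$ on $Y$, because every coordinate is preperiodic for both $f_s$ and $g_s$; since the pair system on $Y$ is non-degenerate --- this is the non-degeneracy just established, and it is also why the argument is run at level $m+1$, whereas at the critical level $m$ Theorem~\ref{density result} shows the analogous points are genuinely Zariski dense --- the arithmetic Hodge index theorem of Yuan--Zhang \cite{Yuan:Zhang:quasiprojective} forces the two adelic metrized line bundles to coincide on $Y$, so the corresponding family canonical heights agree; in particular $\hat{h}_{f_\eta}=\hat{h}_{g_\eta}$ on $\P^1(\overline{k})$, hence $\mu_{f_\eta}=\mu_{g_\eta}$, where $k=\Qbar(S)$. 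As $f$ and $g$ are not both conjugate to $z^{\pm d}$ over $\overline{k}$, Theorem~\ref{LPYZ} gives $\Preper(f_\eta)=\Preper(g_\eta)$, so $\Preper(f_s)=\Preper(g_s)$ for all $s$ in a Zariski-dense open subset of $S$. Then $\mu_{f_s}=\mu_{g_s}$ generically, so $\hat{T}_f=\hat{T}_g$ on $S\times\P^1$, so $\hat{T}_{\Phi,\Delta}=\pi'_*(\hat{T}_f\wedge\hat{T}_g)=\pi'_*(\hat{T}_f^{\wedge2})=0$ because $\hat{T}_f^{\wedge2}=0$ on $S\times\P^1$ (the fibers over $S$ being curves), whence $\mu_{\Phi,\Delta}=0$, contradicting the hypothesis.

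I expect the main obstacle to be the rigidity step in the third paragraph: extracting from the equidistribution (via the arithmetic Hodge index theorem and the non-degeneracy of the pair system on $Y$) the equality of the adelic metrized line bundles, and hence $\mu_{f_\eta}=\mu_{g_\eta}$, while correctly accounting for the exceptional maps in Theorem~\ref{LPYZ} --- this is exactly where the hypothesis that $f,g$ are not both conjugate to $z^{\pm d}$ is used, and one must still exclude the possibility that $f_\eta$ alone is conjugate to a power map (which would already force $\mu_{f_\eta}\neq\mu_{g_\eta}$). A more routine but delicate point is the constant bookkeeping in the second paragraph, ensuring the non-degeneracy at level $m+1$ is genuinely equivalent to $\mu_{\Phi,\Delta}\neq 0$ and is compatible with the normalization of the limiting measure in Theorem~\ref{equidistribution}.
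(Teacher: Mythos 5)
Your setup (reduction to density of $\Phi^{(m+1)}$-preperiodic points in $S\times\Delta^{m+1}$) and your non-degeneracy computation at level $m+1$ are sound and match the paper's Proposition \ref{higher volumes} in substance. The genuine gap is exactly where you suspected: the comparison step that is supposed to produce $\mu_{f_\eta}=\mu_{g_\eta}$. Applying Theorem \ref{equidistribution} once, to the single pair system $\Phi^{(m+1)}$ on $Y$, yields no information comparing $f$ with $g$; all of that burden falls on your appeal to an arithmetic Hodge index theorem for the ``$f$-system'' $f^{(m+1)}$ and the ``$g$-system'' $g^{(m+1)}$ on $Y\cong S\times(\P^1)^{m+1}$, which you do not carry out. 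Worse, the non-degeneracy you established (for $\bar L_f+\bar L_g$, i.e.\ for the pair system) is not the positivity that such a comparison needs: the pure systems are \emph{always} degenerate here, since $\hat{T}_f^{\wedge 2}=0$ on $S\times\P^1$ forces $\hat{T}_{f^{(m+1)}}^{\wedge(2m+1)}=0$ for every $m\geq 1$. So neither Yuan--Zhang equidistribution nor their Hodge-index machinery applies directly to $f^{(m+1)}$ or $g^{(m+1)}$ on $Y$, and a Hodge-index argument run against the non-degenerate sum $\bar L_f+\bar L_g$ would be a nontrivial additional piece of work that the proposal only gestures at. (A smaller slip: $\Phi^{(m+1)}$ does not preserve $S\times\Delta^{m+1}$, so the limit measure is not ``the canonical measure of the restriction of $\Phi^{(m+1)}$ to $Y$''; and your final contradiction only gives $\hat{T}_{\Phi,\Delta}=0$ off a proper subvariety, so one still needs the continuity of its potentials to conclude it vanishes identically.)

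The paper closes this gap without any Hodge index theorem, by a different choice of auxiliary systems. Project the dense set of $\Phi^{(m+1)}$-preperiodic points in $S\times\Delta^{m+1}$ down to $Y'=S\times\Delta^{m}\times\P^1$ inside $X'=S\times(\P^1\times\P^1)^{m}\times\P^1$, and consider the two \emph{mixed} maps $(\Phi^{(m)},f)$ and $(\Phi^{(m)},g)$ on $X'$: the first $2m$ coordinates carry the pair dynamics (so $\mu_{\Phi,\Delta}\neq 0$ makes both triples non-degenerate, by Proposition \ref{higher volumes}), while only the last coordinate distinguishes $f$ from $g$. The same generic sequence is preperiodic for both, so Theorem \ref{equidistribution} applied twice forces $M_f=M_g$, and slicing along the last $\P^1$-factor gives $\mu_{f_s}=\mu_{g_s}$ for $\mu_{\Phi,\Delta}$-almost every $s\in S(\C)$. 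From there the paper argues pointwise over $\supp\mu_{\Phi,\Delta}$ via Theorem \ref{LPYZ}, uses that $\mu_{\Phi,\Delta}$ charges no pluripolar set together with a countability argument on preperiodic hypersurfaces to propagate $\Preper(f_s)=\Preper(g_s)$ to all of $S\setminus V$, and then reaches the same final contradiction $\hat{T}_{\Phi,\Delta}=\pi'_*(\hat{T}_f\wedge\hat{T}_g)=0$ that you do. If you want to salvage your outline, replace the Hodge-index paragraph with this two-system equidistribution and slicing argument.
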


\noindent
To prove Theorem \ref{rbc}, we follow the proof strategy from \cite{Mavraki:Schmidt}, which exploits the product structure of $(f,g)$ acting on $\P^1\times\P^1$ and relies on the general equidistribution result of Yuan and Zhang \cite{Yuan:Zhang:quasiprojective}, stated above as Theorem \ref{equidistribution}.  As a consequence, we infer:

\begin{theorem} \label{uniform bound}
Fix degree $d \geq 2$, and suppose that $S$ is a smooth, irreducible quasi-projective variety parameterizing an algebraic family of pairs $\Phi = (f,g)$ of degree $d\ge 2$ over $S$, all defined over $\Qbar$. 
Assume that  $f$ and $g$ are not both conjugate to $z^{\pm d}$ over the algebraic closure of $k = \Qbar(S)$. If the pairwise-bifurcation measure $\mu_{\Phi, \Delta}$ is non-zero on $S(\C)$, then there exist a Zariski-closed proper subvariety $V\subset S$ defined over $\Qbar$ and $M >0$ such that 
	$$\#\Preper(f_s)\cap\Preper(g_s) \; \le \; M,$$
for all $s\in (S\setminus V)(\C)$. 
\end{theorem}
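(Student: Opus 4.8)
The plan is to deduce the uniform bound from Theorem \ref{rbc} by a Noetherian induction on the base. First I would set up the family $\Phi^{(m+1)} = (f,g)^{(m+1)}$ acting on $S \times (\P^1\times\P^1)^{m+1}$, with the flat family of subvarieties $Y = S \times \Delta^{m+1}$, where $\Delta\subset\P^1\times\P^1$ is the diagonal. A point $(s,x_1,\dots,x_{m+1})$ of $Y$ is preperiodic for $\Phi^{(m+1)}$ exactly when each $x_i$ lies in $\Preper(f_s)\cap\Preper(g_s)$, so Theorem \ref{rbc} says precisely that the preperiodic locus of $\Phi^{(m+1)}$ in $Y(\Qbar)$ is not Zariski dense in $Y$. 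Let $Z\subsetneq Y$ be its Zariski closure, a proper closed subvariety defined over $\Qbar$.

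Next I would argue that on a Zariski-dense open subset of $S$ the intersection $\Preper(f_s)\cap\Preper(g_s)$ has bounded size. The key point is that $Z$, having dimension strictly less than $\dim Y = m + (m+1) = 2m+1$, cannot dominate $S$ with $(m+1)$-dimensional fibers over $\Delta^{m+1}$: that is, there is a proper closed $V_0 \subsetneq S$, defined over $\Qbar$, such that for $s\in (S\setminus V_0)(\C)$ the fiber $Z_s$ has dimension at most $m$ inside $\Delta^{m+1}\iso(\P^1)^{m+1}$. Now suppose, for contradiction, that for some $s\notin V_0$ the set $P_s := \Preper(f_s)\cap\Preper(g_s)\subset\P^1$ has more than $N$ elements for every $N$; equivalently $P_s$ is infinite. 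By Theorem \ref{LPYZ}, an infinite intersection forces $\Preper(f_s)=\Preper(g_s)$. One handles this ``bad'' locus separately: the set of $s$ with $\Preper(f_s)=\Preper(g_s)$ is contained in a proper closed subvariety of $S$ — otherwise, after passing to the generic point, $f$ and $g$ would share all preperiodic points over $k=\Qbar(S)$, hence $\mu_f = \mu_g$ as currents on $S\times\P^1$, which forces $\hat{T}_f = \hat{T}_g$ after the product identification and makes $\hat{T}_{\Phi,\Delta} = \pi'_*(\hat{T}_f\wedge\hat{T}_g) = \pi'_*(\hat{T}_f^{\wedge 2})$; but this current has rank-type degeneracy that makes $\mu_{\Phi,\Delta} = (\hat{T}_{\Phi,\Delta})^{\wedge m}$ vanish (its slice dimension drops), contradicting the hypothesis that $\mu_{\Phi,\Delta}\neq 0$. (More carefully: equality of preperiodic points over $k$ and Theorem \ref{LPYZ} give $\mu_{f_s}=\mu_{g_s}$ for all $s$, so $\hat{T}_f$ and $\hat{T}_g$ have the same slices, forcing $\hat{T}_f = \hat{T}_g$ by the continuity of potentials, and then the pairwise bifurcation measure is $\big(\pi'_*(\hat T_f^{\wedge2})\big)^{\wedge m}=0$ since $\pi'_*(\hat T_f^{\wedge 2})$ is supported where the single family $f$ bifurcates but its diagonal self-intersection has the wrong bidegree to wedge to top degree on $S$.) So let $V_1\subsetneq S$ be the closure of this bad locus, defined over $\Qbar$.

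Now set $V = V_0 \cup V_1 \cup (\text{locus where the family degenerates, e.g. } f_s \text{ or } g_s \text{ drops degree}) \cup (\text{branch locus of } S)$, still a proper closed subvariety defined over $\Qbar$. For $s\in (S\setminus V)(\C)$ I claim $\#P_s \le M$ for a uniform $M$. Since $s\notin V_1$, $P_s$ is finite. Every element $x\in P_s$ is a $\Qbar$-point when $s\in S(\Qbar)$, and by spreading out and a specialization argument it suffices to bound $\#P_s$ for $s\in(S\setminus V)(\Qbar)$; density of such $s$ then gives the bound for all complex $s$ outside a further proper subvariety (which we absorb into $V$ after a Baire/constructibility argument, or one notes directly that $s\mapsto\#P_s$ is lower semicontinuous in the Zariski topology on the constructible locus where it is finite). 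If some $s\notin V$ had $\#P_s \ge m+2$, pick $m+1$ distinct points $x_1,\dots,x_{m+1}\in P_s$; then $(s,x_1,\dots,x_{m+1})\in Z_s$, and letting $s$ vary over a dense subset we would produce enough points to conclude $Z_s$ has dimension $\ge m+1$ for $s$ in a dense subset — but $Z_s$ has dimension $\le m$ for $s\notin V_0$, a contradiction unless $\#P_s \le m+1$. Thus $M = m+1$ works on $S\setminus V$. Finally I would clean up the constructibility issue: the set $\{s : \#P_s > m+1\}$ (with $P_s$ finite) is contained in the image under the projection $Z\to S$ of the locus in $Z^{(m+2)/S}$ of $(m+2)$-tuples with distinct coordinates, which is a constructible set not dense in $S$, hence contained in a proper closed $\Qbar$-subvariety; enlarge $V$ by this set.

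\medskip
The main obstacle I expect is the passage from the non-density of the $\Qbar$-preperiodic locus (Theorem \ref{rbc}, an arithmetic statement) to a bound valid for \emph{all} complex parameters $s\in(S\setminus V)(\C)$: one must control how the fiber dimension of the closed set $Z$ varies, rule out the ``all preperiodic points coincide'' locus using the non-vanishing of $\mu_{\Phi,\Delta}$ (this is where Theorem \ref{LPYZ} and the structure of $\hat T_{\Phi,\Delta}$ enter), and handle the semicontinuity/constructibility so that the exceptional locus is genuinely algebraic and defined over $\Qbar$. Everything else is formal manipulation of the fiber-power setup and dimension counting.
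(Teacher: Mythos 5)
Your skeleton matches the paper's strategy: use Theorem \ref{rbc} to get non-density of the $\Qbar$-preperiodic tuples in $S\times\Delta^{m+1}$, convert that into a fiberwise bound off a proper closed subvariety, and then pass from $\Qbar$-parameters to all complex parameters by spreading out and specializing (your last step is essentially the paper's). But the conversion step is where your argument genuinely breaks. You try to bound $\#P_s$ by a fiber-dimension count: $\dim Z\le 2m$ forces $\dim Z_s\le m$ for $s$ off a proper closed set, and you then claim that $\#P_s\ge m+2$ on a dense set of parameters would force $\dim Z_s\ge m+1$. This does not follow. Whenever $P_s$ is finite, the configuration $P_s^{m+1}$ is a finite set, hence of dimension $0$ no matter how large $\#P_s$ is; a proper closed $Z\subsetneq S\times\Delta^{m+1}$ is perfectly compatible with every fiber $Z_s$ consisting of, say, $10^{100}$ isolated points (take $Z$ a union of many multisections). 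The same objection defeats your final ``clean-up'': there is no reason the locus of tuples of distinct points in fiber powers of $Z$ fails to dominate $S$. What is needed --- and what the paper's Lemma \ref{Bezout} supplies --- is a \emph{degree} argument, not a dimension argument: over a Zariski-open subset of $S$, the closure $Z$ is contained in a family of hypersurfaces of $\Delta^{m+1}\iso(\P^1)^{m+1}$ of a fixed multidegree $(r,\dots,r)$, symmetric under permuting the factors because the configurations $P_s^{m+1}$ are; and a polynomial of multidegree $(r,\dots,r)$ vanishing on a full product $A^{m+1}$ with $|A|>r$ vanishes identically, by eliminating one variable at a time. This gives $\#P_s\le r$ for generic $s$ and handles the infinite and the large-but-finite cases in one stroke.

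Two secondary points. Your separate treatment of the locus $\{s:\Preper(f_s)=\Preper(g_s)\}$ is also soft: a Zariski-dense (possibly countable) set of parameters with coinciding preperiodic points does not force the generic fiber to share all preperiodic points, since a countable Zariski-dense set can sit inside a countable union of proper subvarieties of a preperiodic hypersurface of $f$. (The paper avoids this by letting Lemma \ref{Bezout} absorb the infinite-intersection case, and the analogous step inside the proof of Theorem \ref{rbc} uses that $\mu_{\Phi,\Delta}$ does not charge pluripolar sets, which is strictly stronger than Zariski density.) Finally, your stated reason that $\mu_{\Phi,\Delta}$ vanishes when $\hat T_f=\hat T_g$ (``wrong bidegree,'' ``slice dimension drops'') is garbled; the correct and simple reason, used in Propositions \ref{GV measure} and \ref{higher volumes}, is that $\hat T_f^{\wedge 2}=0$ identically on $S\times\P^1$, because $\hat T_f$ is a limit of normalized pullbacks of a form pulled back from the $\P^1$ factor.
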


\subsection{Product structure}
Let $\Phi = (f,g)$ act on $\P^1\times\P^1$, defined over the field $k = \Qbar(S)$.  Let $\Delta \subset \P^1\times\P^1$ be the diagonal.  For $m = \dim S$, we let $\Phi^{(m)}$ denote the product map acting on $(\P^1 \times \P^1)^m$ over $k$.  Following \cite{Mavraki:Schmidt}, we consider the product of $(\P^1\times\P^1)^{m}$ with another copy of $\P^1$, acted on by $f$ or by $g$.  This defines maps over $\C$ as
	$$(\Phi^{(m)},f):  S \times (\P^1)^{2m + 1} \to S \times (\P^1)^{2m + 1}$$
	$$(s, z_1, \ldots, z_{2m + 1}) \mapsto \big(s, f_s(z_1), g_s(z_2), f_s(z_3),  \ldots, g_s(z_{2 m}), f_s(z_{2m + 1})\big)$$
and 
	$$(\Phi^{(m)},g):  S \times (\P^1)^{2m + 1} \to S \times (\P^1)^{2m + 1}$$
	$$(s, z_1, \ldots, z_{2m + 1}) \mapsto \big(s, f_s(z_1), g_s(z_2), f_s(z_3), \ldots, g_s(z_{2 \dim S}), g_s(z_{2m + 1})\big).$$
Let 
	$$p_1: S \times (\P^1\times\P^1)^{m}\times \P^1 \to S \times (\P^1\times\P^1)^{m}$$
be the projection forgetting the final factor of $\P^1$.  Let 
	$$p_2:  S \times (\P^1\times\P^1)^{m}\times \P^1 \to  S \times  \P^1$$
denote the projection forgetting the intermediate factor.

\begin{prop}  \label{higher volumes}
Let $S$ be an irreducible quasiprojective complex algebraic variety of dimension $m$, and suppose $\Phi = (f,g)$ is an algebraic family of pairs of degree $d>1$ over $S$.  We have
$$M_f := \hat{T}_{(\Phi^{(m)},f)}^{\wedge(2m + 1)} \wedge [S \times \Delta^{m}\times \P^1] = 
p_1^*R \wedge p_2^*\hat{T}_f,$$
where 
	$$R =  \hat{T}_{\Phi^{(m)}}^{\wedge 2m} \wedge [S \times \Delta^{m}].$$
Similarly for $g$ and 
	$$M_g := \hat{T}_{(\Phi^{(m)},g)}^{\wedge(2m + 1)} \wedge [S \times \Delta^{m}\times \P^1].$$
Consequently, if the bifurcation measure $\mu_{\Phi, \Delta}$ is nonzero for a family of pairs $\Phi = (f,g)$ parameterized by $S$, then $M_f$ and $M_g$ are nonzero.  
\end{prop}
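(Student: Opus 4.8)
The plan is to identify the relevant dynamical Green currents and rewrite the defining intersection product as an external product, exploiting the product structure of the maps $(\Phi^{(m)}, f)$ and $(\Phi^{(m)}, g)$. First I would record, as in Example \ref{normalized pair} and Proposition \ref{GV measure}, that for a product of polarized systems the normalized dynamical Green current decomposes as a normalized sum of pullbacks: writing the $(2m+1)$-fold product $X^{(m)} \times \P^1$ over $S$, we have $\hat{T}_{(\Phi^{(m)},f)}$ equal to a constant multiple of $p_1^* \hat{T}_{\Phi^{(m)}} + p_2^* \hat{T}_f$, where $p_1$ and $p_2$ are the two projections defined just before the statement (note $p_1$ lands in $X^{(m)}$ and $p_2$ in $S\times\P^1$, and these two systems involve \emph{disjoint} sets of $\P^1$-coordinates). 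Since each fiber of $X^{(m)} \to S$ has dimension $2m$ and each fiber of $S\times\P^1\to S$ has dimension $1$, the only term of the multinomial expansion of $\bigl(p_1^*\hat{T}_{\Phi^{(m)}} + p_2^*\hat{T}_f\bigr)^{\wedge(2m+1)}$ that survives (all others carry too many copies of $\hat{T}$ in one factor and vanish, exactly as in the proof of Proposition \ref{GV measure}) is the one with $p_1^*\hat{T}_{\Phi^{(m)}}^{\wedge 2m}\wedge p_2^*\hat{T}_f$; collecting the combinatorial and normalization constants gives $\hat{T}_{(\Phi^{(m)},f)}^{\wedge(2m+1)} = p_1^* \hat{T}_{\Phi^{(m)}}^{\wedge 2m} \wedge p_2^*\hat{T}_f$ with coefficient $1$, again by the probability-measure normalization on fibers.

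Next I would intersect with $[S\times\Delta^m\times\P^1]$, observing that this cycle is $p_1^*[S\times\Delta^m]$ (it imposes conditions only on the first $2m$ coordinates and is cut out fiberwise by equations pulled back via $p_1$). Therefore
$$
M_f = \hat{T}_{(\Phi^{(m)},f)}^{\wedge(2m+1)} \wedge [S\times\Delta^m\times\P^1]
= \bigl(p_1^*\hat{T}_{\Phi^{(m)}}^{\wedge 2m} \wedge p_1^*[S\times\Delta^m]\bigr)\wedge p_2^*\hat{T}_f
= p_1^* R \wedge p_2^*\hat{T}_f,
$$
which is the claimed identity, and the same computation with $g$ in place of $f$ gives the formula for $M_g$. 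The product $p_1^*R \wedge p_2^*\hat{T}_f$ makes sense because $\hat{T}_f$ has continuous potentials, so the wedge is well defined, and because $p_1$ and $p_2$ refer to complementary groups of coordinates the two factors are ``independent'' in the sense needed to take the product without collision.

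For the final consequence: by Corollary \ref{our measure} and Proposition \ref{GV measure}, the hypothesis $\mu_{\Phi,\Delta}\neq 0$ is equivalent to non-degeneracy of $(X^{(m)}, Y^{(m)}, \Phi^{(m)})$, i.e. to $R = \hat{T}_{\Phi^{(m)}}^{\wedge 2m}\wedge[S\times\Delta^m]$ being nonzero; in fact $\pi_* R = \mu_{\Phi,\Delta}$. Pushing $M_f = p_1^*R \wedge p_2^*\hat{T}_f$ forward to $S\times\P^1$ along the projection that forgets the $X^{(m)}$-factors, and using Fubini together with the fact that $\hat{T}_f$ restricts on each $\{s\}\times\P^1$ to the probability measure $\mu_{f_s}$ of total mass $1$, one sees that this pushforward equals (the pullback to $S\times\P^1$ of) $\pi_*R = \mu_{\Phi,\Delta}$ wedged with $\hat{T}_f$, hence is nonzero; therefore $M_f\neq 0$, and symmetrically $M_g\neq 0$. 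The only genuinely delicate point is justifying the vanishing of the unwanted multinomial terms and the commutation of pushforward with these wedge products — but both are standard for currents with continuous potentials on the smooth total space, and are precisely the kind of manipulation already carried out in the proof of Proposition \ref{GV measure}, so I do not expect a real obstacle here.
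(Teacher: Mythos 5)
Your proposal is correct and takes essentially the same route as the paper: both arguments rest on decomposing the Green current of a fiber product as a normalized sum of pullbacks, using the fiber-dimension vanishing ($\hat{T}_f^{\wedge 2}=0$ on $S\times\P^1$, and $\hat{T}_{\Phi^{(m)}}^{\wedge(2m+1)}=0$) to kill every multinomial term except the balanced one, and noting that $[S\times\Delta^m\times\P^1]=p_1^*[S\times\Delta^m]$. The only differences are cosmetic: the paper splits the product all the way into the $2m+1$ individual $\P^1$ factors rather than grouping the first $2m$ as you do, and it dispatches the final nonvanishing of $M_f$ and $M_g$ with less detail than your pushforward/Fubini justification.
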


\begin{proof}  
Suppose that $f$ is an algebraic family of maps on $\P^1$ over $S$.  Then $\hat{T}_f$ on $S \times \P^1$ satisfies $\hat{T}_f^{\wedge 2} = 0$.  It follows that, for any fiber product of such maps, $(f_1, \ldots, f_\ell)$ on $(\P^1)^\ell$ over $S$, we have 
	$$\hat{T}_{(f_1, \ldots, f_\ell)}^{\wedge \ell}= q_1^* \hat{T}_{f_1} \wedge \cdots \wedge q_m^* \hat{T}_{f_\ell}$$
for the projections $q_i: S\times (\P^1)^\ell \to S\times \P^1$.  In the setting of the proposition, it follows that 
	$$\hat{T}_{\Phi^{(m)}}^{\wedge(2m)} = q_1^* \hat{T}_f \wedge q_2^* \hat{T}_g \wedge \cdots \wedge q_{2m - 1}^* \hat{T}_f \wedge q_{2m}^* \hat{T}_g$$
and
	$$\hat{T}_{(\Phi^{(m)},f)}^{\wedge(2m + 1)} = p_1^* \hat{T}_{\Phi^{(m)}}^{\wedge(2m)}  \wedge p_2^* \hat{T}_f.$$
The first statements of the proposition follow. Finally, since $\Delta \subset \P^1\times\P^1$ is a hypersurface, we know from Proposition \ref{GV measure} that $\mu_{\Phi, \Delta}$ is nonzero if and only if 
	$$R = \hat{T}_{\Phi^{(m)}}^{\wedge(2m)} \wedge [S \times \Delta^m] > 0.$$
In this case, we see immediately that $M_f$ and $M_g$ are also nonzero.
\end{proof}

\subsection{Proof of Theorem \ref{rbc}}  
Recall that a sequence of points $z_n$ in a variety $Z$ is said to be generic if no subsequence lies in a proper, Zariski-closed subset of $Z$.

Let $m=\dim_\C S$.  Note that the set 
 $$\{(s, x_1, \ldots, x_{m+1}) \in (S \times (\P^1)^{m+1})(\Qbar):  x_i \in \Preper(f_s) \cap \Preper(g_s) \mbox{ for each } i\}$$
in the statement of Theorem \ref{rbc} is naturally identified with the set 
	$$\Preper(\Phi^{(m+1)}) \cap \left(S(\Qbar) \times \Delta^{m+1}\right) \; \subset \; S\times (\P^1\times\P^1)^{m+1},$$
for the $(m+1)$-th fiber power of $\Phi$ over $S$. 
 
Let $K$ be a number field over which $\Phi$ and $S$ are defined.  Suppose, towards a contradiction, that $\Preper(\Phi^{(m+1)})$ is Zariski dense in $S(\Kbar)\times \Delta^{m+1}$.  Via the projection of $\Delta \subset \P^1\times\P^1$ to the component $\P^1$'s, these preperiodic points of $\Phi^{(m+1)}$ in $S\times \Delta^{m+1}$ project to define a generic sequence of points in $\left(S\times\Delta^{m}\times \P^1\right)(\Kbar)$ that are preperiodic for both the maps $(\Phi^{(\dim S)}, f)$ and $(\Phi^{(\dim S)}, g)$.  Since the pairwise-bifurcation measure $\mu_{\Phi, \Delta}$ is nonzero on $S(\C)$, we know from Proposition \ref{higher volumes} that the measures $M_g$ and $M_f$ are nonzero on $\left( S\times(\P^1\times\P^1)^m\times \P^1 \right)(\C)$.  In other words, setting 
	$$X = S\times(\P^1\times\P^1)^m\times \P^1 \quad\mbox{and}\quad Y = S\times\Delta^{m}\times \P^1,$$
the triples $(X, Y, (\Phi^{(m)},f))$ and $(X, Y,  (\Phi^{(m)},g))$ are non-degenerate.  By Theorem \ref{equidistribution}, it follows that the $\Gal(\Kbar/K)$-orbits of these preperiodic points must be uniformly distributed with respect to the measures $M_f$ and $M_g$.  Consequently, we have $M_f = M_g$ in $X(\C)$.

Now let $p_1: S \times (\P^1\times\P^1)^{m}\times \P^1 \to S \times (\P^1\times\P^1)^{m}$
be the projection forgetting the final factor of $\P^1$, as in Proposition \ref{higher volumes}.  By slicing $M_f$ and $M_g$, we conclude that
\begin{equation} \label{equality slice}
	\int_{S\times(\P^1)^{2m}} \int_{\P^1}\phi(t,x)  \, d\mu_{f_{\pi(t)}}(x) \, dR(t) = \int_{S\times(\P^1)^{2m}} \int_{\P^1}\phi(t,x) \, d\mu_{g_{\pi(t)}}(x) \, dR(t)
\end{equation}
for every continuous and compactly supported function $\phi$ on $S\times (\P^1)^{2m+1}$ and the $R$ of Proposition \ref{higher volumes}.  Here, $\pi: S\times (\P^1)^m \to S$ denotes the projection to the base, and $\mu_{f_{\pi(t)}}$ and $\mu_{g_{\pi(t)}}$ are the measures of maximal entropy introduced in \S\ref{mme}.  But since $\pi_*R = \mu_{\Phi, \Delta}$, we infer that 
	$$\mu_{f_s} = \mu_{g_s}$$ 
on $\P^1$ for $\mu_{\Phi, \Delta}$-almost every parameter $s \in S(\C)$. Indeed, suppose there exists $b \in S(\C)$ with $\mu_{f_b} \not= \mu_{g_b}$ and so that $\mu_{\Phi,\Delta}(U) >0$ for every open neighborhood $U$ of $b$.  Then we can find a continuous function $\psi$ on $\P^1(\C)$ such that $\int \psi \mu_{f_b} \neq \int \psi \mu_{g_b}$.  By continuity of the measures we find that $\int \psi \mu_{f_t} \neq \int \psi \mu_{g_t}$ for all $t$ in a neighborhood $U$ of $b$. Therefore, setting $\phi(t, x_1, \ldots, x_{2m}, x_{2m+1}) = h_b(t) \psi(x_{2m+1})$ on $S \times (\P^1)^{2m+1}$ for a bump function $h_b$ supported in $U$, the equality \eqref{equality slice} will fail.

Now we use the hypothesis that $f$ and $g$ are not both conjugate to a power map $z^{\pm d}$ over all of $S$.  Let $V \subset S$ be the (possibly empty) proper subvariety over which the maps $f$ and $g$ are both conjugate to $\pm z^d$.  It follows from Theorem \ref{LPYZ} that $\Preper(f_s) = \Preper(g_s)$ for all $s\in \left(\supp \mu_{\Phi, \Delta} \setminus V\right)(\C)$.  As $\mu_{\Phi,\Delta}$ does not charge pluripolar sets, we conclude that $\Preper(f_s) = \Preper(g_s)$ for all $s \in \left(S \setminus V\right)(\C)$.  Indeed, the preperiodic points of $f$ or $g$ each form a countable union of hypersurfaces in $(S\setminus V) \times\P^1$.  For each irreducible hypersurface $P \subset S\times\P^1$ which is preperiodic for $f$, its intersection with $\Preper(g)$ contains all of  $P \cap \pi^{-1}(\supp \mu_{\Phi, \Delta} \setminus V)$ and so cannot lie in a countable union of hypersurfaces of $P$.  Therefore $P$ must be persistently preperiodic for $g$.  This shows that $\Preper(f_s) \subset \Preper(g_s)$ for all $s \in (S\setminus V)(\C)$; equality follows from the same argument in reverse.

Again from Theorem \ref{LPYZ}, it follows that $\mu_{f_s} = \mu_{g_s}$ for all $s \in (S\setminus V)(\C)$.  Consequently, $\hat{T}_f = \hat{T}_g$ on $(S\setminus V) \times \P^1$.  But this implies that the pairwise-bifurcation currrent $\hat{T}_{\Phi, \Delta} = \pi_*(\hat{T}_f \wedge \hat{T}_g)$ vanishes on $S\setminus V$.  Recalling that $\hat{T}_{\Phi, \Delta}$ has continuous potentials, so its support cannot lie in $V$, we conclude that $\hat{T}_{\Phi, \Delta} = 0$ and therefore $\mu_{\Phi, \Delta} = 0$ on all of $S(\C)$.  This contradicts our hypothesis.
 \qed

\subsection{Proof of Theorem \ref{non-density result}}
We assume there exists a rigid $m$-repeller for the family $(f,g)$ over $S$, where $m = \dim S$. Corollary \ref{our measure} implies that the bifurcation measure $\mu_{\Phi, \Delta}$ is nonzero on the space $S$.   Finally, Theorem \ref{rbc} gives us the result we desire. 
\qed

\subsection{Proof of Theorem \ref{uniform bound}}
Recall that a sequence of points $s_n \in S$ is said to be generic if no subsequence lies in a proper, Zariski-closed subset of $S$.  
We start with the following lemma. 
\begin{lemma}\label{Bezout}
Let $\Phi = (f,g)$ be an algebraic family of pairs parameterized by a smooth and irreducible $S$, defined over $\C$.  If there is a generic sequence of points $s_n \in S(\C)$, $n\geq 1$, over which the number of common preperiodic points for $f_{s_n}$ and $g_{s_n}$ is either infinite or increasing to $\infty$, then the preperiodic points of $\Phi^{(m)}$ are Zariski dense in $S \times \Delta^m$ for every $m \geq 1$.
\end{lemma}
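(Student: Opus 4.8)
The plan is to argue by contradiction with the Zariski closure. Fix $m \geq 1$ and identify $\Delta^m$ with $(\P^1)^m$, so that the preperiodic points of $\Phi^{(m)}$ in $S \times \Delta^m$ are exactly the tuples $(s, x_1, \ldots, x_m)$ with each $x_i$ lying in $C_s := \Preper(f_s) \cap \Preper(g_s)$. Let $W \subseteq S \times (\P^1)^m$ denote their Zariski closure; the goal is to show $W = S \times (\P^1)^m$. The crucial observation is that for each index $n$ the entire ``grid'' $\{s_n\} \times C_{s_n}^m$ is contained in $W$, and by hypothesis $|C_{s_n}|$ is either infinite or tends to $\infty$ with $n$.

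First I would dispose of the easy degeneracies. Since $\{s_n\}$ is a generic sequence it lies in no proper Zariski-closed subset of $S$; as the projection $W \to S$ is proper (because $(\P^1)^m$ is projective) its image is closed, and it contains every $s_n$, so that image is all of $S$ and $W$ dominates $S$. If $W$ were a proper closed subvariety, then — since $S \times (\P^1)^m$ is irreducible of dimension $m + \dim S$, so that every component of $W$ has strictly smaller dimension (a full-dimensional component would be the whole space) — the fibre of $W$ over the generic point of $S$ would be a \emph{proper} closed subvariety of $(\P^1)^m$ over $k = \C(S)$, hence contained in the zero locus $V(P)$ of a nonzero multihomogeneous form $P$ of some multidegree $(e_1, \ldots, e_m)$. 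A standard spreading-out argument (which must handle possible reducibility of $W$ and the components of $W$ that do not dominate $S$) then produces a nonempty Zariski-open $U \subseteq S$ such that, for every $s \in U$, there is a nonzero form $P_s$ of multidegree at most $(e_1, \ldots, e_m)$ with $W_s \subseteq V(P_s)$; in particular $D := \max_i e_i$ is a degree bound independent of $s$.

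The heart of the argument is then a B\'ezout-type (equivalently, Schwartz--Zippel-type) grid-avoidance estimate: a nonzero multihomogeneous form on $(\P^1)^m$ of multidegree $(e_1, \ldots, e_m)$ cannot vanish on a grid $C^m$ with $C \subseteq \P^1$ finite and $|C| > \max_i e_i$. This is proved by induction on $m$ — the base case $m = 1$ being that a nonzero binary form of degree $e_1$ has at most $e_1$ roots, and the inductive step writing the form as a polynomial in the last coordinate, using the inductive hypothesis to pick a point of $C^{m-1}$ off the zero locus of a nonzero coefficient, and then applying the degree-$e_m$ bound on the last $\P^1$. Combining this with the previous paragraph: since $\{s_n\}$ is generic, $s_n \in U$ for all large $n$, so $C_{s_n}^m \subseteq W_{s_n} \subseteq V(P_{s_n})$ forces $|C_{s_n}| \leq D$, contradicting the assumption that $|C_{s_n}|$ is infinite or tends to $\infty$. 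Hence $W = S \times (\P^1)^m$, which is the assertion.

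The main obstacle I anticipate is not a single deep input but the bookkeeping needed to make the degree bound $D$ genuinely uniform in $s$, i.e. the spreading-out step, which must cope with reducibility of $W$ and with the components of $W$ failing to dominate $S$, together with a clean statement and proof of the grid-avoidance estimate over the (non-algebraically-closed) base field $k$. Everything else is formal. One may also note that the case $|C_{s_n}| = \infty$ is genuinely simpler: then $\Preper(f_{s_n}) = \Preper(g_{s_n})$ by Theorem \ref{LPYZ}, so $C_{s_n}$ is already Zariski dense in $\P^1$ and $W_{s_n} = (\P^1)^m$, whence one concludes from genericity of $\{s_n\}$ and upper semicontinuity of fibre dimension, without the B\'ezout estimate.
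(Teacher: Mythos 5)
Your proposal is correct and follows essentially the same route as the paper: take the Zariski closure, note it dominates $S$ by genericity, spread out to get a uniform multidegree bound for the fibres over a Zariski-open subset, and then apply a grid-avoidance/B\'ezout estimate to force $|C_{s_n}|$ to be bounded, a contradiction. The only cosmetic difference is that the paper exploits the permutation symmetry of the grid to normalize the multidegree to $(r,\ldots,r)$, whereas you work with an arbitrary multidegree and take the maximum.
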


\begin{proof}
Fix $m \geq 1$. For each $n$, let 
	$$M(n) \leq \# \; \Preper(f_{s_n}) \cap \Preper(g_{s_n}) \; \in  \; \mathbb{N} \cup \{\infty\} $$
so be chosen so that $M(n) \to \infty$ as $n\to \infty$.  The points of $\Preper(f_{s_n}) \cap \Preper(g_{s_n})$ determine a configuration of $M(n)^m$ points in $\Delta^m$ that are preperiodic for $\Phi_{s_n}^{(m)}$.  Note that this collection of points is symmetric under permutation of the coordinates on $\Delta^m$.  

Let $Z$ be the Zariski closure of these points in $S \times \Delta^m$.   The symmetry of the preperiodic points in $\Delta^m$ implies that $Z$ is also symmetric under permuting the $m$ coordinates of $\Delta^m$.  Moreover, because the sequence $\{s_n\}$ is Zariski dense in $S$, we see that $\pi(Z) = S$ for the projection $\pi: S\times \Delta^m \to S$.  

Now suppose that $Z$ is not all of $S\times \Delta^m$.  Then, over a Zariski-open subset $U\subset S$, $Z$ is contained in a family of hypersurfaces in $\Delta^m \iso (\P^1)^m$ over $U$ that are symmetric symmetric under permuting the $m$ coordinates.  Shrinking $U$ if necessary, these hypersurfaces will have a well-defined degree $(r, \ldots, r)$ for some $r \geq 1$; that is, each projection that forgets one component $\Delta$ will be of degree $r$.  But since the sequence $\{s_n\}$ is generic, this implies that $M(n) \leq r$ for all sufficently large $n$.  This is a contradiction.  
\end{proof}

Now to prove Theorem \ref{uniform bound}, let $S$ be a smooth, irreducible quasi-projective variety parameterizing an algebraic family of pairs $(f,g)$, all defined over $\Qbar$ as in its statement. By Theorem \ref{rbc} and in view of Lemma \ref{Bezout} we infer that there exists a strict Zariski closed $V\subset S$ defined over $\Qbar$ and $M\in\R$ such that 
\begin{align}\label{bound}
\#\Preper(f_s)\cap\Preper(g_s)\le M,
\end{align}
for all $s\in (S\setminus V)(\Qbar)$. Write $U:=S\setminus V$. 
We want to show that $M$ can be chosen so that \eqref{bound} holds for all $s\in U(\C)$. 
Fix $s_0\in  U(\C)\setminus U(\Qbar)$ and let $P_{s_0}:= \Preper(f_{s_0})\cap\Preper(g_{s_0})$.  Let $L$ be a finitely generated subfield of $\C$ (with transcendence degree at least $1$) such that $\Phi_{s_0}$ is defined over $L$.  So there is a quasi-projective variety $X$ over $\Qbar$ of finite type with function field $L$ over which we can extend $\Phi_{s_0}$ (viewing $s_0$ as an element of $L$) to an endomorphism $\Phi_X: X\times \P^1\times\P^1\to X\times \P^1\times\P^1$ defined over $\Qbar$.  We also extend $P_{s_0}$ to $P_X\subset X\times \P^1\times\P^1$.  Note that each specialization $(\Phi_X)_t = (f_t, g_t)$ for $t\in X(\Qbar)$ is naturally identified with some $\Phi_s$ for $s \in U(\Qbar)$. Thus, for each $t\in X(\Qbar)$ we have a uniform bound on $\# \Preper(f_t)\cap\Preper(g_t)$.  But clearly the specializations of the distinct points in $P_X$ remain distinct at some $t\in X(\Qbar)$. The proof is complete. 
\qed

\bigskip
\section{Quadratic polynomials}
\label{polynomials}

Before proceeding to the proof of Theorem \ref{general bound}, we present in this section a new proof of Theorem \ref{quad}.  The strategy of proof is the same as for Theorem \ref{general bound}, but the argument for proving that the pairwise-bifurcation measure is non-zero is considerably simpler for these pairs of quadratic polynomials.   On the other hand, because the parameter space is two dimensional, we can use Theorem \ref{MS} to complete the proof.

\begin{theorem}  \label{quad examples}
Let $f_c(z) = z^2+c$ for $c\in \C$, and consider the algebraic family of pairs $\Phi_{(c_1, c_2)} = (f_{c_1}, f_{c_2})$ parameterized by $(c_1, c_2)\in \C^2$.  The pairwise-bifurcation measure $\mu_{\Phi, \Delta}$ is non-zero on $\C^2$. 
\end{theorem}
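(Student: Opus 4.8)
The strategy is to apply Proposition \ref{transverse} (equivalently Corollary \ref{our measure}) to the family $\Phi = (f_{c_1}, f_{c_2})$ over $S = \C^2$, so that it suffices to exhibit a single \emph{rigid $2$-repeller}: a pair $(x_1, x_2) \in \P^1 \times \P^1$ sitting on the diagonal in each coordinate, together with a parameter $(c_1^0, c_2^0) \in \C^2$, such that each $x_i$ is preperiodic to a repelling cycle for both $f_{c_i^0}$-iterates, each $x_i$ lies in the Julia set of the corresponding map, and the two "marked preperiodic point" conditions cut out $(c_1^0, c_2^0)$ as an isolated point of the relevant intersection — i.e., there is no holomorphic disk through $((c_1^0,c_2^0),(x_1,x_1),(x_2,x_2))$ along which two common preperiodic points persist. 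The first two conditions are easy to arrange (repelling preperiodic points are dense in the Julia set), so the real content is the rigidity/transversality in the two-parameter family.

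I would first reduce to a transversality statement about a \emph{single} parameter and a single marked point. For $f_c(z) = z^2 + c$, periodic points of $f_c$ move holomorphically and transversally with $c$ in a strong sense away from a thin set of parameters; concretely, choose a repelling periodic point $x(c)$ of some period $p$ that is not persistently preperiodic, and let $w(c)$ be a preimage of $x(c)$ under some iterate that is \emph{not} itself periodic. The equation $f_{c_1}^{n_1}(w_1) = x(c_1)$, say, then forces $c_1$ near $c_1^0$ once $w_1$ is fixed, because the multiplier/position of the orbit varies nontrivially. The key classical input is that for the quadratic family the "critical orbit" and, more relevantly, the relative positions of distinct cycles are non-isotrivial: two distinct periodic orbits cannot stay in a fixed projective configuration as $c$ varies (one can see this from the transversality of Misiurewicz parameters, or directly: $f_c$ is isotrivial only for... it never is for $d=2$ on a positive-dimensional base). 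Then in the product $\C^2$ I pick $x_1$ preperiodic-to-repelling for $f_{c_1^0}$ and $x_2$ for $f_{c_2^0}$ independently; since the two coordinates of $\C^2$ are independent and each marked-point condition rigidifies its own $c_i$, the pair is rigid in the product.

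More precisely, the plan is: (i) fix $c_1^0, c_2^0$ generic (e.g. Misiurewicz parameters, or just parameters where $f_c$ has a repelling cycle not sharing a projective symmetry with a preimage cycle); (ii) pick $x_1 \in J(f_{c_1^0})$ strictly preperiodic to a repelling cycle of $f_{c_1^0}$, with the property that the defining equation $f_{c_1}^{N}(x_1') = f_{c_1}^{M}(x_1')$ restricted to the relevant branch through $x_1$ has $c_1 = c_1^0$ as an isolated solution — this is where I invoke the fact that "$(f, x_1)$ is not persistently preperiodic and not isotrivial" and quote Theorem \ref{stable point} / the stability theory of \cite{D:stableheight} to pass between "non-persistent" and "isolated preperiodic parameter," after checking such an $x_1$ exists; (iii) do the same for $x_2$ and $c_2^0$; (iv) observe that since $f_{c_1}$ and $f_{c_2}$ act on independent coordinates, a holomorphic disk in $\C^2 \times \P^1 \times \P^1$ carrying two persistent common preperiodic points $(x_1(t), x_1(t))$ and $(x_2(t), x_2(t))$ would, by projecting to each factor, force $c_i(t)$ constant, hence the disk constant; (v) conclude $(x_1, x_2)$ is a rigid $2$-repeller at $(c_1^0, c_2^0)$, and apply Corollary \ref{our measure}.

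The main obstacle I anticipate is step (ii): verifying that one can choose the preperiodic points $x_i$ so that the preperiodicity equations genuinely pin down $c_i^0$ as an \emph{isolated} intersection point, rather than lying on a positive-dimensional component of persistently common preperiodic points. For the diagonal family $(f_{c_1}, f_{c_2})$ this amounts to ruling out "accidental" persistent coincidences between a cycle of $f_{c_1}$ and a preimage-of-cycle of $f_{c_1}$ at the same point — i.e., that $(f_c, a)$ is not persistently preperiodic for the marked point $a = x_1$ — which is a statement purely about the one-parameter quadratic family and which one expects to be generic, but must be pinned down (for instance by taking $c_i^0$ to be a Misiurewicz parameter where the critical orbit itself provides the needed non-isotrivial, non-persistent marked point, so that transversality of Misiurewicz parameters in the quadratic family does the job directly). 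Once this single-variable rigidity input is secured, the passage to the two-parameter product is formal because of the product structure.
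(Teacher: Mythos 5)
Your high-level strategy --- produce a rigid $2$-repeller and invoke Corollary \ref{our measure} --- is exactly the paper's, but the construction of the repeller has two genuine gaps. First, the definition of a rigid $2$-repeller requires \emph{each} of $x_1$ and $x_2$ to be preperiodic to a repelling cycle for \emph{both} maps $f_{c_1^0}$ and $f_{c_2^0}$ (the points sit on the diagonal $\Delta$), whereas your steps (ii)--(iii) choose $x_1$ preperiodic only for $f_{c_1^0}$ and $x_2$ only for $f_{c_2^0}$, ``independently.'' Such a pair does not lie in $S\times\Delta^2$ at all. Producing even one parameter pair $(c_1^0,c_2^0)$ with two common preperiodic points is not free; the paper imports the explicit Doyle--Hyde example $(c_1^0,c_2^0)=(-21/16,-29/16)$, for which $f_{-21/16}$ and $f_{-29/16}$ share $26$ explicit rational preperiodic points, and works with $p_1=5/4$, $p_2=-7/4$.

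Second, and more seriously, rigidity is \emph{not} formal from the product structure. The disk $\phi:\D\to \C^2\times(\P^1)^2$ in the definition is allowed to move the points $x_i(t)$ together with the parameters, so the condition ``$x_1(t)\in\Preper(f_{c_1(t)})$'' is a codimension-one condition on $(c_1,x_1)$ and does not pin down $c_1$; your claim in step (iv) that projecting to each factor ``forces $c_i(t)$ constant'' fails (take $x_1(t),x_2(t)$ to follow two moving periodic points of $f_{c_1(t)}$ with $c_1(t)$ nonconstant). What must be checked is the transversality of two codimension-two loci in the four-fold $\C^2\times(\P^1)^2$: writing $x_i(t)=p_i^a(c_1(t))=p_i^b(c_2(t))$ for the local (implicit-function-theorem) parameterizations of the preperiodic points, a nonconstant disk exists to first order exactly when
$\det\left(\begin{smallmatrix} p_{1,a}' & p_{1,b}' \\ p_{2,a}' & p_{2,b}' \end{smallmatrix}\right)=0$.
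This determinant has no reason to be nonzero for an arbitrary choice of points; the paper verifies it by implicit differentiation of the explicit cycle equations, obtaining $-32/135\neq 0$. Your appeal to Misiurewicz transversality concerns the marked \emph{critical} orbit in a one-parameter family and does not supply this two-by-two nondegeneracy for a pair of non-critical preperiodic points in the two-parameter product. Without either the explicit computation or a substitute rigidity argument (as the paper provides for the monomial pair via Theorem \ref{rigidity of special pair}), the proof is incomplete at its central step.
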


\begin{proof}
We appeal to Corollary \ref{our measure} and study the pair 
	$$f_{-21/16}(z) = z^2-21/16 \quad \mbox{ and } \quad  f_{-29/16}(z) = z^2-29/16$$
These two polynomials have at least 26 common preperiodic points in $\C$; see \cite{Doyle:Hyde} for a construction of these quadratic polynomials and similar examples in higher degrees.  We will show that two of the common preperiodic points define a rigid $2$-repeller over $S = \C^2$.

Let $(a_0, b_0) = (-21/16, -29/16)$.  Suppose $p_1, p_2\in \C$ are common preperiodic points, each iterating to a repelling cycle for both $f_{a_0}$ and $f_{b_0}$, and suppose that there is a holomorphic map 
	$$R(c_1, c_2) =  (p_1^a(c_1), p_1^b(c_2), p_2^a(c_1), p_2^b(c_2)) \in \C^4$$ 
for $(c_1, c_2)$ near $(a_0, b_0)$ in $\C^2$ so that $R(a_0, b_0) = (p_1, p_1, p_2, p_2) \in \Delta^2$ and $R(c_1, c_2)$ is persistently preperiodic for the fiber power $\Phi^{(2)}$ over $S$.  Then the pair $(p_1, p_2)$ will form a rigid 2-repeller in $(\P^1 \times\P^1)^2$ at $(a_0,b_0)$ if
	$$\det \left(  \begin{array}{cccccc} 
		1 & 0 & 0 & 0 & 1 & 0  \\
		0 & 1 & 0 & 0 & 0 & 1   \\
		0 & 0 & 1 & 0 & p_{1,a}' & 0 \\
		0 & 0 & 1 & 0 & 0 & p_{1,b}' \\
		0 & 0 & 0 & 1 & p_{2,a}' & 0 \\
		0 & 0 & 0 & 1 & 0 & p_{2,b}'  \end{array} 
		\right)  \not= 0 $$
for any such $R$, where $p_{i,a}'$ and $p_{i,b}'$ ($i = 1,2$) denote the derivatives of the coordinate functions of $R$, evaluated at $c_1 = a_0$ and $c_2=b_0$, respectively.  The first four columns are a basis for the tangent space to $S \times \Delta^2$, while the second two columns span the tangent space to the graph of $R$ over a neighborhood of $(a_0,b_0) \in S$.  In fact, showing this determinant is nonzero is stronger than being a rigid repeller, since this will show the graph of $R$ intersects $\Delta^2$ transversely over $(a_0, b_0)$.  

A simple computation shows that the above determinant is equal to
	$$\det \left( \begin{array}{cc}   p_{1,a}' & p_{1,b}' \\ p_{2,a}' & p_{2,b}' \end{array} \right). $$

Now let us take $p_1 = 5/4$ and $p_2 = -7/4$.   The orbits of $p_1$ and $p_2$ for $f_{a_0}$ are 
	$$\frac54 \mapsto \frac14 \mapsto -\frac54 \mapsto \frac14$$
	$$-\frac74 \mapsto \frac74 \mapsto \frac74$$
with each landing on a repelling cycle.  The orbits of $p_1$ and $p_2$ for $f_{b_0}$ are 
	$$\frac54 \mapsto -\frac14 \mapsto -\frac74 \mapsto \frac54$$
with each in a repelling cycle of period 3.  To compute $p_{1,a}', p_{1,b}', p_{2,a}', p_{2,b}'$ we determine the equations of these cycles, as a function of the parameter $c$, and use implicit differentiation.  

The equation for a (strictly) preperiodic point $z$ so that $f_c(z)$ is in a cycle of period 2 is
	$$P_1(c,z) = 1 + c - z + z^2 = 0,$$
so that 
	$$p_{1,a}' = -\frac{\del P_1}{\del  c}\bigg/\frac{\del P_1}{\del z} \bigg|_{c = a_0, z = p_1} = -2/3.$$
The prefixed points for $f_c$ satisfy
	$$P_2(c,z) = c + z + z^2 = 0,$$
so
	$$p_{2,a}' = -\frac{\del P_2}{\del  c}\bigg/ \frac{\del P_2}{\del z} \bigg|_{c = a_0, z = p_2}  =  2/5.$$
The period-three cycles for $f_c$ satisfy
\begin{eqnarray*} 
P_3(c,z) & =&   1 + c + 2 c^2 + c^3 + z + 2 c z + c^2 z + z^2 + 3 c z^2 + \\
 && 3 c^2 z^2 + z^3 + 2 c z^3 + z^4 + 3 c z^4 + z^5 + z^6.
\end{eqnarray*}
This gives 
	$$p_{1,b}' = -\frac{\del P_3}{\del  c}\bigg/\frac{\del P_3}{\del z} \bigg|_{c = b_0, z = p_1} =  2/9$$
	$$p_{2,b}' = -\frac{\del P_3}{\del  c}\bigg/\frac{\del P_3}{\del z} \bigg|_{c = b_0, z = p_2} =  2/9$$
We conclude that 
	$$\det \left( \begin{array}{cc}   p_{1,a}' & p_{1,b}' \\ p_{2,a}' & p_{2,b}'  \end{array} \right) = -\frac{32}{135} \not= 0. $$
With Corollary \ref{our measure}, this completes the proof of Theorem \ref{quad examples}.  
\end{proof}

\subsection{Proof of Theorem \ref{quad}}
Let $S = \C^2$.  In view of Theorem \ref{quad examples}, Theorem \ref{uniform bound} implies that there is a finite collection of irreducible, algebraic curves $C_1,\ldots, C_m\subset S$ all defined over $\overline{\Q}$ and a constant $B$ so that 
\begin{align}\label{outoffinitelymany}
|\mathrm{Preper}(f_{t_1})\cap \mathrm{Preper}(f_{t_2})|\le B,
\end{align}
for all $(t_1,t_2)\in \C^2 \setminus \left(\bigcup_i C_i\right)$. Applying Theorem \ref{MS} to each $C_i$ we see that (enlarging $B$ if necessary) the bound in \eqref{outoffinitelymany} holds for each $(t_1,t_2)\in\C^2$ unless $\mathrm{Preper}(f_{t_1})=\mathrm{Preper}(f_{t_2})$. The latter happens only if the Julia sets of $f_{t_1}$ and $f_{t_2}$ coincide, and so by \cite{Baker:Eremenko}, only if $t_1=t_2$, which completes our proof. 
\qed

\bigskip
\section{Monomials}
\label{monomials}

In this section, we complete the proof of Theorem \ref{general bound}.  To achieve this, we will prove:

\begin{theorem} \label{monomial theorem}
For each degree $d\geq 2$, the pair $(z^d, \zeta \, z^d)$ for primitive root of unity $\zeta^{d+1}=1$ has a rigid $(4d-1)$-repeller.  
\end{theorem}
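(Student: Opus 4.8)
The plan is to realize $(z^d,\zeta z^d)$ as a point $s_0$ of a $(4d-1)$-dimensional family and read the rigid repeller off of Theorem \ref{rigidity of special pair}. First I would let $S\subset\Rat_d\times\Rat_d$ be a locally closed submanifold through $s_0=(z^d,\zeta z^d)$ transverse to the $\PSL_2\C$-orbit of $s_0$. Since the common symmetry group of $z^d$ and $\zeta z^d$ lies in the finite dihedral group $\{z\mapsto\omega z,\ z\mapsto\omega/z:\omega^{d-1}=1\}$, that orbit is $3$-dimensional, so $\dim_\C S=(4d+2)-3=4d-1=:m$, and $S\to(\Rat_d\times\Rat_d)/\Aut\P^1$ has finite fibers, so any isotrivial holomorphic disk in $S$ is constant. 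Both $f_0=z^d$ and $g_0=\zeta z^d$ are hyperbolic with Julia set the unit circle, on which every cycle is repelling; the common preperiodic points there are exactly the roots of unity $\mathcal R$ (every root of unity is preperiodic for both maps), and $\Preper(f_0)=\Preper(g_0)=\mathcal R\cup\{0,\infty\}$ is countable. By hyperbolicity, after shrinking $S$ to a neighborhood of $s_0$ the families $(f_s)$ and $(g_s)$ are $J$-stable, carrying holomorphic motions $\Psi^f,\Psi^g$ of their Julia sets; for $x\in\mathcal R$ set $y^f_x(s):=\Psi^f_s(x)$, $y^g_x(s):=\Psi^g_s(x)$ (holomorphic on all of $S$), $h_x:=y^f_x-y^g_x$, and $H_x:=\{h_x=0\}$. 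By local uniqueness of the motion, any holomorphic disk $t\mapsto(\sigma(t),y_1(t),\dots,y_m(t))$ with $\sigma(0)=s_0$, $y_i(0)=x_i$, along which each $y_i(t)$ is a common preperiodic point of $(f_{\sigma(t)},g_{\sigma(t)})$, satisfies $y_i(t)=y^f_{x_i}(\sigma(t))=y^g_{x_i}(\sigma(t))$ for small $t$, hence $\sigma(t)\in\bigcap_i H_{x_i}$.

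So it suffices to produce roots of unity $x_1,\dots,x_m$ with $\bigcap_{i=1}^m H_{x_i}=\{s_0\}$ as a germ at $s_0$: then any such $\sigma$ is constant, each $y_i\equiv x_i$ (because $\Preper(f_0)$ is countable), the disk is constant, and --- as the $x_i$ are pre-repelling common preperiodic points of $(f_0,g_0)$ with each $(x_i,x_i)\in\supp(\mu_{f_0}\otimes\mu_{g_0})=J(f_0)\times J(g_0)$ --- this exhibits the rigid $m$-repeller. The crux is the claim that $\bigcap_{x\in T}H_x=\{s_0\}$ near $s_0$ for \emph{every} infinite $T\subseteq\mathcal R$. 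Granting it, one builds the $x_i$ greedily: given $x_1,\dots,x_k$ with $\dim_{s_0}\bigcap_{i\le k}H_{x_i}\le m-k$ and $m-k\ge1$, every positive-dimensional irreducible component $W$ of that germ has $W\not\subseteq H_x$ for all but finitely many $x\in\mathcal R$ (otherwise $W\subseteq\bigcap_{x\in T}H_x=\{s_0\}$ for an infinite $T$, contradicting $\dim W\ge1$), so choosing $x_{k+1}\in\mathcal R$ distinct from $x_1,\dots,x_k$ and avoiding these finitely many ``bad'' roots for each of the finitely many components drops the dimension by one; after $m$ steps $s_0$ is isolated in $\bigcap_{i=1}^m H_{x_i}$.

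For the crux, suppose $\bigcap_{x\in T}H_x$ has positive dimension at $s_0$ and take a non-constant holomorphic $\sigma:\D\to S$, $\sigma(0)=s_0$, inside it, so that $y^f_x(\sigma(t))=y^g_x(\sigma(t))$ for all $x\in T$, $t\in\D$. Off the countable set of $t$ where two of the continuations $\{y^f_x(\sigma(\cdot)):x\in T\}$ agree, these are infinitely many distinct common preperiodic points, so $\Preper(f_{\sigma(t)})=\Preper(g_{\sigma(t)})$ there by Theorem \ref{LPYZ}; hence $\mu_{f_{\sigma(t)}}=\mu_{g_{\sigma(t)}}$ on a dense set of $t$, hence for all $t$ by weak-$*$ continuity, and then $\Preper(f_{\sigma(t)})\cap J(f_{\sigma(t)})=\Preper(g_{\sigma(t)})\cap J(g_{\sigma(t)})$ at every $t$ for which not both maps are conjugate to a power map. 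At the remaining $t$ both $f_{\sigma(t)}$ and $g_{\sigma(t)}$ are power maps (by Zdunik's classification, since $\mu_{f_{\sigma(t)}}=\mu_{g_{\sigma(t)}}$ has circular support, which excludes the Latt\`es and Tchebyshev cases), say $Az^dA^{-1}$ and $Bz^dB^{-1}$ with $A$ near $\mathrm{id}$ and $B$ near $z\mapsto\lambda z$, $\lambda^{d-1}=\zeta$; then $y^f_x(\sigma(t))=A(x)$, $y^g_x(\sigma(t))=B(\lambda^{-1}x)$, the identities $A(x)=B(\lambda^{-1}x)$ for $x\in T$ force $B=A\circ(z\mapsto\lambda z)$, and $\Preper(f_{\sigma(t)})\cap J=A(\mathcal R)=B(\mathcal R)=\Preper(g_{\sigma(t)})\cap J$. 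Thus the hypothesis of Theorem \ref{rigidity of special pair} holds for all $t\in\D$, so $t\mapsto(f_{\sigma(t)},g_{\sigma(t)})$ is isotrivial and $\sigma$ is constant --- contradicting its choice. This proves the crux, and with it the theorem.

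I expect the main obstacle to be exactly this last paragraph: upgrading the equality of the countably many marked continuations $y^f_x=y^g_x$ ($x\in\mathcal R$) along $\sigma$ to the full equality $\Preper\cap J=\Preper\cap J$ for \emph{all} $t$, and in particular handling the parameters at which both maps degenerate to power maps. That is precisely where the hypothesis $\zeta=e^{2\pi i/(d+1)}$ is used: the power-map computation identifies $(f_{\sigma(t)},g_{\sigma(t)})$ with an $A$-conjugate of the fixed pair $(z^d,\zeta^{-1}z^d)\sim(z^d,\zeta z^d)$, which is what makes Theorem \ref{rigidity of special pair} true for $\zeta$ of order $d+1$ and false for roots of unity of order $\le d$ (cf.\ the remark following Theorem \ref{rigidity of special pair} and the family $f_s=z^d$, $g_s=s^{d-1}z^d$).
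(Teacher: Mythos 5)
Your argument reaches the same destination as the paper's and leans on the same two pillars --- the holomorphic motion of the pre-repelling roots of unity near $(z^d,\zeta z^d)$ and Theorem \ref{rigidity of special pair} --- but the route through the key step is genuinely different. The paper enumerates the roots of unity, forms the coincidence loci $V_i=\{P_i=Q_i\}$ inside an explicit normalized slice $S_d$, and at each stage of the induction negates ``some $V_i$ drops the dimension'' to obtain ``\emph{all} motions coincide along the current locus,'' which is verbatim the hypothesis $\Preper(f)\cap J(f)=\Preper(g)\cap J(g)$ of Theorem \ref{rigidity of special pair}; no appeal to Theorem \ref{LPYZ} is needed. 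You instead prove the stronger statement that \emph{any infinite} subfamily $\{H_x\}_{x\in T}$ already intersects in $\{s_0\}$, which costs you an upgrade --- from infinitely many common preperiodic points along a disk to full equality of $\Preper\cap J$, via Theorem \ref{LPYZ}, weak-$*$ continuity of $\mu_f$, and a separate analysis at power-map parameters --- but buys you a cleaner dimension-dropping step: since each positive-dimensional component of $\bigcap_{i\le k}H_{x_i}$ lies in $H_x$ for only finitely many $x$, a single new root of unity cuts down \emph{every} component at once, so exactly $m=4d-1$ points suffice even when the intermediate loci are reducible. (The paper's assertion that some $V_1\cap V_i$ has codimension $2$ is slightly loose on precisely this point.)

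The one place you must tighten is the power-map locus along $\sigma$. The labeled identities $y^f_x(\sigma(t))=A(x)$ and $y^g_x(\sigma(t))=B(\lambda^{-1}x)$ require the conjugators to be chosen holomorphically along the disk with $A_0=\mathrm{id}$; at an \emph{isolated} parameter $t_*\neq 0$ where both maps happen to be power-map conjugates you only know $y^f_x(\sigma(t_*))=A(\xi)$ for \emph{some} root of unity $\xi$ depending on $x$, and the M\"obius-interpolation step breaks down. The fix is to observe that the locus of $t$ where both maps are power-map conjugates is an analytic subset of $\D$, hence either discrete or all of $\D$: in the discrete case, shrink the disk so that $t=0$ (where the equality $\Preper\cap J=\mathcal{R}$ holds by inspection) is the only such parameter and use Theorem \ref{LPYZ} elsewhere; in the other case your computation, with holomorphic lifts $A_t,B_t$ through the finite covering $\PSL_2\C\to\PSL_2\C\cdot z^d$, goes through and in fact exhibits the disk as isotrivial outright. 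With that case split made explicit, the proof is complete.
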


Recall that the bifurcation measure $\mu_\Delta$ on the $(4d-1)$-dimensional moduli space of pairs $(\Rat_d \times \Rat_d)/\Aut \P^1$ was defined in \eqref{bif measure 1}.  Via Corollary \ref{our measure}, Theorem \ref{monomial theorem} will imply that $\mu_\Delta$ is nonzero. But we must be careful:  the moduli space is likely to be singular at pairs $(f,g)$ with automorphisms, and this pair $(z^d, \zeta \, z^d)$ has automorphisms of the form $A(z) = \omega z$ for $\omega^{d-1} = 1$.  Throughout this section, we work with the subspace 
	$$S_d \subset \Rat_d \times \Rat_d$$ 
consisting of pairs $(f,g)$ where 
	$$f(z) = \frac{z^d + a_{d-1} z + \cdots + a_1 z}{b_{d-1}z^{d-1} + \cdots + b_1 z + \left(1 + \sum_{i = 1}^{d-1} a_i -\sum_{j=1}^{d-1} b_j\right)} $$
with $a_i, b_j \in \C$ and $g$ is arbitrary.  Note that $S_d$ is a smooth and irreducible quasiprojective complex algebraic variety.  This normalization for $f$ fixes the three elements of $\{0,1,\infty\}$ in $\P^1$, and the projection from $S_d$ to the moduli space $(\Rat_d \times \Rat_d)/\Aut \P^1$ is finite-to-one.  In other words, this $S_d$ defines a maximally non-isotrivial algebraic family of pairs of degree $d$, with $\dim S_d = 4d-1$.  It is not surjective to the moduli space of pairs, but it covers a Zariski open subset.  The pair $(z^d, \zeta z^d)$ is an element of $S_d$ for any choice of primitive $(d+1)$-th root of unity $\zeta$.  

\subsection{Proof of Theorem \ref{general bound}, assuming Theorem \ref{monomial theorem}} 
Let $\mu_{\Phi, \Delta}$ denote the pairwise-bifurcation measure on $S_d(\C)$ for the family of all pairs $\Phi = (f,g)$ parameterized by $S_d$, as defined in \eqref{bif measure 2} and Example \ref{pairwise bif current}.  With Theorem \ref{monomial theorem}, we may apply Corollary \ref{our measure} to deduce that the pairwise-bifurcation measure $\mu_{\Phi, \Delta}$ is non-zero on $S_d(\C)$.  We then apply Theorem \ref{uniform bound} to conclude that there is a Zariski-open subset $U$ of $S_d$, defined over $\Qbar$, for which there is a uniform bound on the number of common preperiodic points of $f_s$ and $g_s$ for all $s \in U(\C)$.  Taking the union of all ($\Aut \P^1$)-orbits of $U$ in $\Rat_d \times \Rat_d$ completes the proof.
\qed

\subsection{Rigidity of the monomial pair}  \label{special rigidity}
We now aim to prove Theorem \ref{rigidity of special pair}.  Fix degree $d\geq 2$.  Let $f_0(z) = z^d$ and $g_0(z) = \zeta z^d$ for $\zeta = e^{2\pi i/(d+1)}$.  By conjugating the image of $\psi$ and shrinking the domain disk if necessary, we may assume that its image lies in the subvariety $S_d \subset \Rat_d\times\Rat_d$.  So, let $\D\subset \C$ denote the unit disk, and suppose that $\psi = (\psi_1, \psi_2): \D\to S_d$ is a holomorphic map with $\psi(0) = (f_0, g_0)$ so that 
	$$\Preper(\psi_1(t)) \cap J(\psi_1(t)) =  \Preper(\psi_2(t)) \cap J(\psi_2(t))$$
for all $t \in \D$.  We need to show that $\psi$ is constant.

\begin{remark}  \label{poly examples}
The conclusion of Theorem \ref{rigidity of special pair} is false if we allow $\zeta$ to be a root of unity of any order $\leq d$.   For each $m \leq d$ and $\zeta$ with $\zeta^m = 1$, let
	$$f_c(z) = z^{d-m}(z^m + c) \quad\mbox{and}\quad g_c(z) = \zeta f_c(z)$$
for $c\in\C$.  Then $\zeta$ is a symmetry of the Julia set of $f_c$, and $f_c(\zeta z) = \zeta^{d-m} f_c(z) = \zeta^d f_c(z)$.  Note that $g_c^n(z) = \zeta^{1 + \cdots + d^{n-1}} f^n_c(z)$ for all $n$ and all $c$.  If a point $x$ is preperiodic for $f_c$, then the iterates $g_c^n(x)$ must eventually cycle, and vice versa.  That is, $\Preper(f_c) = \Preper(g_c)$ for all $c\in \C$ and $J(f_c) = J(g_c)$ for all $c \in \C$.  See, for example, \cite{Baker:Eremenko} for more information on symmetries. 
\end{remark}

Returning to our setting, where $\zeta = e^{2\pi i/(d+1)}$, note that the second iterates of $f_0$ and $g_0$ coincide.  We first observe that the same relation must hold throughout $\D$; that is, we have 
	$$\psi_1(t)^2= \psi_2(t)^2$$
for all $t\in \D$.  Indeed, both $f_0$ and $g_0$ are $J$-stable in $\Rat_d$, and so there is a holomorphic motion of the Julia sets, inducing conjugacies between $\psi_i(0)$ and $\psi_i(t)$ on their Julia sets for $t$ small, $i = 1,2$. In particular, because all preperiodic points of $\psi_i(t)$, $i=1,2$, in the Julia set must coincide for all $t$, the motions $z_t$ of these preperiodic points $z \in J(\psi_1(0)) = J(\psi_2(0))$ must coincide for $\psi_1(t)$ and for $\psi_2(t)$, for all $t$ small.  The induced conjugacy forces a relation on the second iterates, $\psi_1(t)^2(z_t) =  \psi_2(t)^2(z_t)$, holding for all preperiodic points $z \in J(\psi_1(0)) = J(\psi_2(0))$ for all $t$ small.  As there are infinitely many preperiodic points in the Julia set, we have equality of iterates $\psi_1(t)^2 = \psi_2(t)^2$ for all $t$ small. Finally, by holomorphic continuation, the equality persists for all $t \in \D$.

Now consider the map $F = f^2-g^2$ from the space $\Rat_d\times\Rat_d$ to the set of all rational functions $R_{2d^2} \subset \C(z)$ of degree at most $2d^2$. Recall that the image of $\psi$ lies in the subspace $S_d$, which maps finite-to-one to $(\Rat_d\times\Rat_d)/\Aut\P^1$, and we have shown that $F(\psi(t)) = 0$ for all $t \in \D$.  We aim to conclude that $\psi$ is constant.  Choosing coefficients for coordinates on $\Rat_d\times\Rat_d$ near the point $(f_0, g_0)$ and on the target space $R_{2d^2}$, it is enough to show that the derivative matrix $DF_{(f_0, g_0)}$ has the maximal possible rank of $4d-1$.  Indeed, since $F$ vanishes on the fiber of the quotient $\Rat_d\times\Rat_d \to (\Rat_d\times\Rat_d)/\Aut\P^1$ through $(f_0, g_0)$, this will imply, by the Chain Rule, that some (possibly higher-order) derivative of $t \mapsto F(\psi(t))(z)$ will be nonzero at $t=0$, whenever $\psi$ is non-constant. 

We have thus reduced the proof of Theorem \ref{rigidity of special pair} to the following lemma:

\begin{lemma} \label{linear algebra}
For $d\ge 2$, let $f(z)=\frac{a_dz^d+\cdots+a_0}{b_dz^d+\cdots+b_1z+1}$ and $g(z)=\frac{A_dz^d+\cdots+A_0}{B_dz^d+\cdots+B_1z+1}$. 
Let $F=f^2-g^2$. Let $\vec{a}=(1,0,\ldots,0)$ corresponding to $z^d$ and $\vec{A}=(\zeta,0,\ldots,0)$ corresponding to $\zeta z^d$ for $\zeta^{d+1}=1$ a primitive $(d+1)$-th root of unity. 
Then the vectors $\partial_{a_i}F(\vec{a}), \partial_{b_j}F(\vec{a}), \partial_{A_k}F(\vec{A}), \partial_{B_{\ell}}F(\vec{A})$, for all $i,k\in\{0,\ldots,d\}$ and $j,\ell\in\{1,\ldots, d\}$ generate a subspace of $\C[z]$ of dimension $4d-1$. 
\end{lemma}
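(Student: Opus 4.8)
The plan is to compute the $4d+2$ derivative vectors at the base point $(\vec a,\vec A)$ explicitly, observe that they split into two groups occupying disjoint ranges of degrees, and verify that each group already spans the full coordinate subspace of $\C[z]$ spanned by the monomials it involves; counting those monomials then shows the total span has dimension exactly $4d-1$, which is also the largest it could be (since $DF$ annihilates the $3$-dimensional tangent to the $\Aut\P^1$-orbit through $(z^d,\zeta z^d)$).

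First I would record the first-order variation of an iterate. Since $f^2=f\circ f$ and $f_0(z)=z^d$, a one-parameter deformation with $u:=\partial_\varepsilon f|_0$ gives $\partial_\varepsilon(f^2)|_0 = u(z^d)+d\,z^{d^2-d}\,u(z)$; likewise, using $g_0(z)=\zeta z^d$ and $\zeta^{d+1}=1$, one gets $\partial_\varepsilon(g^2)|_0 = v(\zeta z^d)+\zeta^{-1}d\,z^{d^2-d}\,v(z)$ with $v:=\partial_\varepsilon g|_0$. Substituting the elementary variations $\partial_{a_i}f|_0=z^i$, $\partial_{b_j}f|_0=-z^{d+j}$, $\partial_{A_k}g|_0=z^k$, $\partial_{B_\ell}g|_0=-\zeta z^{d+\ell}$ and repeatedly using $\zeta^{d+1}=1$ gives, at the base point,
$$\partial_{a_i}F = z^{di}+d\,z^{d^2-d+i}, \qquad \partial_{A_k}F = -\zeta^{k}z^{dk}-\zeta^{-1}d\,z^{d^2-d+k} \qquad (0\le i,k\le d),$$
$$\partial_{b_j}F = -z^{d^2+dj}-d\,z^{d^2+j}, \qquad \partial_{B_\ell}F = \zeta^{\ell}z^{d^2+d\ell}+d\,z^{d^2+\ell} \qquad (1\le j,\ell\le d).$$
Every monomial in the first line has degree in the $2d$-element set $\{0,d,\ldots,d^2-2d\}\cup\{d^2-d,\ldots,d^2\}$, and every monomial in the second line has degree in the $(2d-1)$-element set $\{d^2+1,\ldots,d^2+d\}\cup\{d^2+2d,\ldots,2d^2\}$; these two ranges are disjoint, so the span of all $4d+2$ vectors is the direct sum of the span of the ``numerator group'' $\{\partial_{a_i}F,\partial_{A_k}F\}$ and that of the ``denominator group'' $\{\partial_{b_j}F,\partial_{B_\ell}F\}$, and it suffices to show each group spans the coordinate subspace cut out by its monomials, of dimensions $2d$ and $2d-1$ respectively.

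For the denominator group I would add the vectors in pairs: $\partial_{b_j}F+\partial_{B_j}F=(\zeta^{j}-1)\,z^{d^2+dj}$, and $\zeta^{j}\neq 1$ for $1\le j\le d$ because $\zeta$ is a \emph{primitive} $(d+1)$-th root of unity; hence every $z^{d^2+dj}$ is in the span, and then so is every $z^{d^2+j}$ via $\partial_{b_j}F$, which accounts for all $2d-1$ monomials. For the numerator group, $\partial_{a_d}F=(d+1)z^{d^2}$ gives $z^{d^2}$; for $0\le i\le d-2$ the monomials $z^{di}$ and $z^{d^2-d+i}$ are distinct, and the $2\times 2$ matrix expressing $\partial_{a_i}F$ and $\partial_{A_i}F$ in them has determinant $d(\zeta^{i}-\zeta^{-1})\ne 0$ (again because $\zeta^{i+1}\ne 1$ for $i$ in this range), so both monomials enter the span; this produces $z^{0},z^{d},\ldots,z^{d^2-2d}$ together with $z^{d^2-d},\ldots,z^{d^2-2}$. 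Finally $\partial_{a_{d-1}}F=z^{d^2-d}+d\,z^{d^2-1}$ combined with the already-secured $z^{d^2-d}$ yields $z^{d^2-1}$, so all $2d$ monomials of the numerator group are accounted for, and the count is complete.

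The step I expect to cause the most trouble is not any single computation but the bookkeeping around monomial collisions: the monomials $z^{d^2-d}$, $z^{d^2}$, and $z^{d^2+d}$ each arise from two different coordinate directions at once, which is precisely why $\partial_{A_d}F$ is a scalar multiple of $\partial_{a_d}F$ and why the honest count of distinct monomials appearing is $4d-1$ rather than $4d+2$. One has to fix the index ranges in the two spanning arguments carefully so that the near-triangular elimination still closes despite these coincidences, and so that the numerator and denominator groups are genuinely supported on disjoint sets of monomials; once this is set up correctly, the rest is the routine differentiation of the first step.
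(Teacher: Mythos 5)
Your derivative formulas agree exactly with the paper's (including the factor $\zeta^{-1}d$ on the $A_k$-directions and the simplification $\zeta^{1+d+\ell}=\zeta^{\ell}$ on the $B_\ell$-directions), and your argument is correct and essentially the paper's: the paper proves rank $\ge 4d-1$ by working with the columns of the $(4d+2)\times(2d^2+1)$ coefficient matrix, while you work with the rows (the polynomials themselves), exploiting that each is supported on at most two monomials and that $\zeta^{j}\ne 1$ for $1\le j\le d$ makes the relevant $2\times 2$ determinants nonzero. Your monomial-by-monomial elimination is a slightly cleaner bookkeeping of the same computation, and it has the small added benefit of giving the upper bound $4d-1$ explicitly from the count of distinct monomials.
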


\begin{proof}
First we compute the derivatives 
\begin{align}
\begin{split}
\partial_{A_k}F(\vec{A})&=-\frac{1}{\zeta}d z^{d^2-d+k}-\zeta^{k}z^{dk}\\
\partial_{B_{\ell}}F(\vec{A})&=\zeta^{\ell}z^{d^2+\ell d}+dz^{d^2+\ell}\\
\partial_{a_i}F(\vec{a})&=dz^{d^2-d+i}+z^{id}\\
\partial_{b_j}F(\vec{a})&=-z^{d^2+jd}-dz^{d^2+j}. 
\end{split}
\end{align}
Let $M$ be the matrix with $s$-th column consisting of the coefficients of $z^{s-1}$ as they occur in order $\partial_{A_0}F(\vec{A}),\ldots,\partial_{A_d}F(\vec{A})$, $\partial_{B_1}F(\vec{A}), \ldots, \partial_{B_d}F(\vec{A})$, $\partial_{a_0}F(\vec{a}), \ldots, \partial_{a_d}F(\vec{a})$, $\partial_{b_1}F(\vec{a}),\ldots,\partial_{b_d}F(\vec{a})$. 
Each is a polynomial in $z$ with degree at most $2d^2$, so this a $(4d+2)\times (2d^2+1)$ matrix. 

Set $\{e_i\}$ to be the standard basis vectors for $\C^{4d+2}$. Notice that all powers of $z$ that appear, appear twice with the exception of $z^{d(d-1)}=z^{d^2-d}$ and $z^{d^2+d}$ which appear $4$ times. The $2d-1+1+2d-1=4d-1$ non-zero columns of $M$ are as follows. 

\begin{align}\label{leftpart}
\begin{split}
m_{kd+1}&=-\zeta^ke_{k+1}+e_{2d+k+2},~k=0,\ldots, d-2\\
m_{(d-1)d+1}&=-\frac{d}{\zeta}e_1-\zeta^{d-1}e_d+de_{2d+2}+e_{3d+1}\\
m_{(d-1)d+1+i}&=-\frac{d}{\zeta}e_{i+1}+de_{2d+2+i},~i=1,\ldots, d-1.
\end{split}
\end{align}
This covers the first $d(d-1)+d-1$ columns of the matrix. The central column of $M$ is 
$$m_{d^2+1}=-\frac{1}{\zeta}(d+1)e_{d+1}+(d+1)e_{3d+2}.$$
The following non-zero columns are 
\begin{align}\label{rightpart}
\begin{split}
m_{d^2+1+j}&=de_{d+1+j}-de_{3d+2+j},~j=1,\ldots,d-1\\
m_{d(d+1)+1}&= \zeta e_{d+2}+d e_{2d+1}-e_{3d+3}-de_{4d+2}\\
m_{d(d+1)+1+sd}&=\zeta^{s+1} e_{d+s+2}- e_{3d+3+s}, ~s=1,\ldots, d-1.
\end{split}
\end{align}

From \eqref{leftpart} and \eqref{rightpart} we infer  
\begin{align}
\begin{split}
\<m_{kd+1},m_{(d-1)d+k+1}\>&=\<e_{k+1},e_{2d+k+2}\>,~k=1,\ldots,d-2\\
\<m_{d^2+k+1},m_{d(d+1)+1+(k-1)d}\>&=\<e_{d+k+1},e_{3d+k+2}\>, ~k=2,\ldots,d-1,
\end{split}
\end{align}
and the space generated by the above column vectors $V$ has dimension $2d-4+2d-4$. 
Notice that we have not yet considered the column vectors 
\begin{align}
\begin{split}
m_1&=-e_1+e_{2d+2}\\ 
m_{d^2}&=-\frac{d}{\zeta}e_{d}+de_{3d+1} \\
m_{d^2+2}&=de_{d+2}-de_{3d+3}\\ 
m_{2d^2+1}&=\frac{1}{\zeta}e_{2d+1}-e_{4d+2}\\ 
m_{d^2+1}&=-\frac{1}{\zeta}(d+1)e_{d+1}+(d+1)e_{3d+2}. 
\end{split}
\end{align}
which are mutually orthogonal and so generate a $5$-dimensional space $W$. Further $W$ is contained in the orthogonal complement of $V$ so that $\dim V+W = 4d-3$. 
Finally, look at the vectors 
\begin{align}
\begin{split}
m_{(d-1)d+1}&=-\frac{d}{\zeta}e_1-\zeta^{d-1}e_d+de_{2d+2}+e_{3d+1}\\
m_{d(d+1)+1}&= \zeta e_{d+2}+d e_{2d+1}-e_{3d+3}-de_{4d+2}
\end{split}
\end{align}
They are clearly linearly independent so generate a $2$-dimensional $U$. We can easily see that they don't belong in $V+W$. 
Indeed, the only vector in $V+W$ involving $e_1$ is $m_1$, but notice that it also involves $e_{2d+2}$ and the coefficients don't match that of $m_{(d-1)d+1}$. Similarly, the only vector involving $e_{d+2}$ in $V+W$ is $m_{d^2+2}$, which also involves $e_{3d+3}$ in a way that does not match $m_{d(d+1)+1}$. 

Thus the rank of our matrix is at least $\dim V+W+U = 4d-1$ and the lemma follows. 
\end{proof}

\begin{remark}
It is necessary to take a primitive $(d+1)$-th root of unity for the dimension in Lemma \ref{linear algebra} to be $4d-1$. Clearly, the dimension is smaller for $\zeta=1$. If on the other hand we chose $\zeta$ with $\zeta^d=\zeta^k$ for some $k\in\{0,\ldots, d-2\}$, then (at least) two non-zero columns of the matrix $M$ in the proof of Lemma \ref{linear algebra} are related. For instance we have $$m_{(d-1)d+k+1}=dm_{kd+1}=-d\zeta^de_{k+1}+de_{2d+2+k}.$$ 
\end{remark}

\subsection{Proof of Theorem \ref{monomial theorem}}
With Theorem \ref{rigidity of special pair} in hand, we can now complete the proof of Theorem \ref{monomial theorem}.  

Enumerate the roots of unity as $\{\xi_i\}_{i\geq 1}$, in any order.  For pairs $(f,g) \in S_d$ near $(f_0, g_0)$, let $P_i$ (respectively, $Q_i$) denote a parameterization of the preperiodic point of $f$ (respectively, $g$) that agrees with $\xi_i$ at $(f_0, g_0)$.  By stability, each of these $P_i$ and $Q_i$ is well defined and smooth at $(f_0, g_0)$. Note also that the subvariety of $S_d$ defined by $V_1 = \{P_1 = Q_1\}$ cannot be all of $S_d$, because there exist pairs $(f,g)$ with all of their preperiodic points disjoint. Thus the codimension of $V_1$ is 1.  Now consider the subvarieties $V_i = \{P_i = Q_i\}$ and their intersections with $V_1$, for all $i$. If all of them coincide with $V_1$ near $(f_0, g_0)$, then we would have $\Preper(f) \cap J(f) = \Preper(g) \cap J(g)$ persistently along $V_1$ near $(f_0, g_0)$.   This contradicts Theorem \ref{rigidity of special pair}.  Therefore, there exists an index $i$ so that $V_1 \cap V_i$ has codimension 2 near $(f_0, g_0)$.  Continuing inductively in this way, we find a $(4d-1)$-tuple of roots of unity that form a rigid $(4d-1)$-repeller at $(f_0, g_0)$. 

This completes the proof of Theorem \ref{monomial theorem}.
\qed

\bigskip
\section{Latt\`es maps}\label{Lattes maps}
\label{BFT section}

In this final section, we prove Theorem \ref{Lattes theorem intro}, restated here as Theorem \ref{Lattes theorem}.   

Let $\cL$ denote the Legendre family of flexible Latt\`es maps in degree 4, defined by 
	$$f_t(z) = \frac{(z^2-t)^2}{4z(z-1)(z-t)}$$ 
for $t \in \C\setminus\{0,1\}$.  This $f_t$ is the quotient of the multiplication-by-2 endomorphism of the Legendre elliptic curve 
	$$E_t = \{y^2 = x(x-1)(x-t)\}$$
via the projection $(x,y)\mapsto x$.  The preperiodic points of $f_t$ coincide with the projection of the torsion points of $E_t$.  

\begin{theorem} \label{Lattes theorem}
For each degree $d\geq 2$, there exists a uniform bound $M_d$ so that either
	$$|\Preper(f) \cap \Preper(g)| \leq M_d \quad\mbox{or} \quad \Preper(f) = \Preper(g)$$
for all pairs $(f,g)$ with $f \in \cL$ and $g \in \Rat_d$.  
\end{theorem}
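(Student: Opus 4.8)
The plan is to run the machinery of Sections \ref{nondegeneracy:bifmeasure}--\ref{unlikely intersection} on the parameter space $S=(\C\setminus\{0,1\})\times\Rat_d$, which parameterises exactly the pairs in question, with $f=f_t$ the degree-$4$ Legendre Latt\`es map and $g$ the tautological degree-$d$ map; set $m:=\dim_\C S=2d+2$. The two factors have degrees $4$ and $d$, but the constructions of \S\ref{nondegeneracy:bifmeasure} and the Yuan--Zhang equidistribution theorem carry over with only notational changes to pairs of unequal degrees, working with the pair of Green currents $(\hat T_f,\hat T_g)$ on $S\times\P^1$, the current $\hat T_{\Phi,\Delta}=\pi'_*(\hat T_f\wedge\hat T_g)$ on $S$, and the nef adelic metrization of $\O(1)$ on $\P^1$ over $S$ obtained by averaging the two dynamical metrizations. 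Since $f$ is a Latt\`es map, in particular not conjugate to $z^{\pm d}$ over $k=\Qbar(S)$, Theorem \ref{uniform bound} applies once we know that $\mu_{\Phi,\Delta}$ is nonzero on $S(\C)$; and by Corollary \ref{our measure} this reduces to producing a rigid $m$-repeller for $\Phi$ at a single point $s_0\in S(\C)$.

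To build the repeller I would choose a CM parameter: pick $t_0\in\Qbar$ so that the Legendre curve $E_{t_0}$ has complex multiplication by an order containing $\sqrt{-d}$, and take $g_0$ to be the Latt\`es quotient of the endomorphism $[\sqrt{-d}]$ of $E_{t_0}$ by $[-1]$; then $\deg g_0=d$ and $\Preper(g_0)=x(E_{t_0,\tor})=\Preper(f_{t_0})$, so at $s_0=(t_0,g_0)$ the maps $f_{t_0}$ and $g_0$ share infinitely many preperiodic points, each landing on a repelling cycle for both (their Julia sets are all of $\P^1$). The rigidity input -- the analogue of Theorem \ref{rigidity of special pair}, and the step I expect to be the main obstacle -- is the claim that \emph{$s_0$ is an isolated point of the bad locus} $\mathcal{B}:=\{(t,g)\in S:\Preper(f_t)=\Preper(g)\}$, which is closed by Theorem \ref{LPYZ} and the continuity of the Green potentials. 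Its proof is where the theory of Latt\`es maps replaces the linear algebra of Lemma \ref{linear algebra}: by Theorem \ref{LPYZ} and Zdunik's description of exceptional maps by their measures, a positive-dimensional germ of $\mathcal{B}$ through $s_0$ would force $g$ to be a degree-$d$ Latt\`es quotient of $E_t$ by $[-1]$ for $t$ varying near $t_0$; for generic such $t$ (so $E_t$ without CM) this requires $\mathrm{End}(E_t)$ to contain an element of norm $d$, hence $d$ to be a perfect square $n^2$, and then the only such family is $t\mapsto$ (quotient of $[n]$ composed with a $2$-torsion translation) -- a family that does not specialise to $g_0$ at $t_0$, because $g_0$ comes from the genuinely complex endomorphism $[\sqrt{-d}]$, not from an integer multiplication composed with a translation. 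If $d$ is not a perfect square, $\mathcal{B}$ is already zero-dimensional.

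Granting this, the rigid $m$-repeller is produced exactly as in the proof of Theorem \ref{monomial theorem}. Enumerate the common preperiodic points $\rho_1,\rho_2,\dots$ at $s_0$; near $s_0$ each $\rho_i$ has a holomorphic continuation $P_i$ as a preperiodic point of $f$ (the Legendre family is stable over $\C\setminus\{0,1\}$, being the flexible Latt\`es family) and, for all but the finitely many $i$ lying on a critical orbit of $g_0$, a holomorphic continuation $Q_i$ as a preperiodic point of $g$ (repelling periodic points, and their clean preimages, move holomorphically). Put $H_i=\{P_i=Q_i\}$, a hypersurface germ through $s_0$ -- proper, since generic pairs in $S$ have disjoint preperiodic sets. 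One finds inductively indices $i_1,\dots,i_k$ with $H_{i_1}\cap\dots\cap H_{i_k}$ of codimension exactly $k$ at $s_0$: if at some stage $k<m$ no further $H_i$ drops the dimension, then along $W_k:=H_{i_1}\cap\dots\cap H_{i_k}$ the maps $f$ and $g$ share infinitely many preperiodic points, so $W_k\subseteq\mathcal{B}$, whence $W_k=\{s_0\}$; but $W_k$ has codimension $k<m$, a contradiction. Thus the induction reaches $k=m$, and $(s_0,\rho_{i_1},\dots,\rho_{i_m})$ is a rigid $m$-repeller: conditions (1)--(3) of \S\ref{repeller} hold because each $\rho_{i_j}$ lands on a repelling cycle for both $f_{t_0}$ and $g_0$, the canonical measures of Latt\`es maps have full support, and (3) is precisely the codimension-$m$ statement. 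Corollary \ref{our measure} now gives $\mu_{\Phi,\Delta}\neq0$, and Theorem \ref{uniform bound} yields a proper $\Qbar$-subvariety $V\subsetneq S$ and a constant $M$ with $\#(\Preper(f_s)\cap\Preper(g_s))\le M$ for all $s\in(S\setminus V)(\C)$.

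It remains to upgrade from the bound off $V$ to the full dichotomy, which I would do by Noetherian induction on $\Qbar$-subvarieties $W\subseteq S$: if $\Preper(f_s)=\Preper(g_s)$ for generic $s\in W$ then, since $\mathcal{B}$ is closed, this holds throughout $W$, giving the second alternative; otherwise $W\not\subseteq\mathcal{B}$, and one shows $\mu_{\Phi|_W,\Delta}\neq0$ -- by re-running the construction above at a CM point of $\mathcal{B}\cap W$, or, when $\dim W=1$, by quoting Theorem \ref{MS}, which disposes of curves unconditionally -- and then Theorem \ref{uniform bound} applied to $W$ reduces the claim to finitely many subvarieties of strictly smaller dimension. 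The induction terminates and yields a single $M_d$ valid on all of $S(\C)$, which is the theorem. The genuinely delicate point, as noted, is the isolatedness of $s_0$ in $\mathcal{B}$, and in particular the perfect-square degrees, where $\mathcal{B}$ is one-dimensional and one must verify that its one-dimensional component misses the chosen CM pair.
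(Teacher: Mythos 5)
Your overall architecture matches the paper's (rigid repeller $\Rightarrow$ nonzero pairwise-bifurcation measure $\Rightarrow$ Theorem \ref{uniform bound} $\Rightarrow$ descent to smaller subvarieties), and your construction of the base-case rigid repeller is genuinely different from the paper's: you seed the argument at a CM parameter where $\Preper(f_{t_0})=\Preper(g_0)$ and derive rigidity from the isolatedness of $(t_0,g_0)$ in the locus $\mathcal{B}=\{\Preper(f_t)=\Preper(g)\}$, which you control via the classification of Latt\`es maps of a given degree on a fixed elliptic curve. That part is plausible (including your treatment of perfect-square $d$), and it is an attractive alternative to the paper's route. Two caveats, one minor and one fatal. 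The minor one: you run the machinery on the pair $(f_t,g)$ of unequal degrees $(4,d)$ and assert that \S\ref{nondegeneracy:bifmeasure} and Theorem \ref{equidistribution} ``carry over with only notational changes.'' As stated, every result you invoke (the polarized-dynamical-system framework, Corollary \ref{our measure}, Theorem \ref{uniform bound}) requires a single polarization degree; the paper equalizes degrees by replacing $(f_t,g)$ with $(f_{d,t},g^2)$, both of degree $d^2$, noting that this does not change the preperiodic sets (\S\ref{ni}, Proposition \ref{nonisotrivial L}). You should do the same rather than re-derive the theory for unequal degrees.

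The fatal gap is the descent step. Theorem \ref{uniform bound} gives the bound only off an \emph{uncontrolled} proper subvariety $V\subsetneq S$, and to get the full dichotomy you must handle every irreducible component $W$ of $V$ (and, recursively, of the exceptional loci inside it), in dimensions $2\le\dim W\le 2d+1$. Your proposal to ``re-run the construction at a CM point of $\mathcal{B}\cap W$'' does not work: your construction of a rigid repeller requires a point of $W$ at which $f$ and $g$ share \emph{infinitely many} preperiodic points (i.e., a point of $\mathcal{B}\cap W$) that is moreover isolated in $\mathcal{B}\cap W$, and a general $W$ need not meet $\mathcal{B}$ at all, let alone at such a point. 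Theorem \ref{MS} rescues only the one-dimensional strata. This is exactly the content the paper supplies in Proposition \ref{nd prop}: for \emph{any} smooth irreducible $S'$ finite over $\cL\times\Rat_d$ with $\Preper(f)\ne\Preper(g)$ generically, a rigid $(\dim S')$-repeller is \emph{built} one coordinate at a time by repeatedly applying Theorem \ref{stable point} to periodic-point markings of the Latt\`es factor (all of whose cycles are repelling), with McMullen's rigidity theorem ruling out the degenerate case in which the Julia set of $g$ follows the holomorphic motion of $J(f)=\P^1$. Without an argument of this kind -- one that produces common pre-repelling preperiodic points on an arbitrary stratum rather than locating them inside $\mathcal{B}$ -- your induction cannot close, and you only obtain a Theorem \ref{general bound}-style statement (a bound off an unspecified subvariety), not the dichotomy claimed in Theorem \ref{Lattes theorem}.
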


\begin{cor}\label{Bft:cor}  
There exists a constant $M>0$ such that for every pair of elliptic curves $E_1$ and $E_2$ over $\C$, equipped with degree-two projections $\pi_i: E_i \to \P^1$ ramified at the 2-torsion points $E_i[2]$, we have 
	$$|\pi_1(E^{\mathrm{tors}}_1)\cap \pi_2(E^{\mathrm{tors}}_2)|\le M,$$
if and only if $\pi_1(E_1[2]) \not= \pi_2(E_2[2])$.    
\end{cor}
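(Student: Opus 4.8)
The plan is to deduce Corollary \ref{Bft:cor} from Theorem \ref{Lattes theorem} in degree $d=4$, together with the description of the measures of maximal entropy of Latt\`es maps recorded in the sketch proof of Theorem \ref{LPYZ}; the substantive input is Theorem \ref{Lattes theorem}, and what remains is a translation into dynamics plus the identification of the alternative $\Preper(f)=\Preper(g)$ with the equality $\pi_1(E_1[2])=\pi_2(E_2[2])$.

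First I would set up the dictionary. For an elliptic curve $E$ over $\C$ with degree-two projection $\pi\colon E\to\P^1$ ramified over $E[2]$, the map $\pi$ is the quotient by the negation involution; since multiplication by $2$ commutes with negation, it descends through $\pi$ to a flexible Latt\`es map $g_E\colon\P^1\to\P^1$ of degree $4$ with $g_E\circ\pi=\pi\circ[2]$, and then $\Preper(g_E)=\pi(E^{\mathrm{tors}})$ because the torsion of $E$ is exactly the set of preperiodic points of $[2]$. After an isomorphism $E_1\iso E_{t_1}$ onto a Legendre curve with $t_1\in\C\setminus\{0,1\}$, the projection $\pi_1$ becomes the coordinate projection up to post-composition by some $A\in\Aut\P^1$, so that $g_{E_1}=A^{-1}\circ f_{t_1}\circ A$ with $f_{t_1}\in\cL$; hence
$$\bigl|\pi_1(E_1^{\mathrm{tors}})\cap\pi_2(E_2^{\mathrm{tors}})\bigr|=\bigl|\Preper(g_{E_1})\cap\Preper(g_{E_2})\bigr|=\bigl|\Preper(f_{t_1})\cap\Preper(h)\bigr|,$$
where $h=A\circ g_{E_2}\circ A^{-1}\in\Rat_4$. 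Applying Theorem \ref{Lattes theorem} to the pair $(f_{t_1},h)$ yields a constant $M_4$, independent of $E_1$ and $E_2$, such that either this common cardinality is at most $M_4$ or $\Preper(f_{t_1})=\Preper(h)$, the latter being equivalent to $\Preper(g_{E_1})=\Preper(g_{E_2})$. I would take $M=M_4$.

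It then remains to prove $\Preper(g_{E_1})=\Preper(g_{E_2})$ if and only if $\pi_1(E_1[2])=\pi_2(E_2[2])$. For ``if'': the double cover of $\P^1$ branched over a given set of four points is unique up to isomorphism over $\P^1$, so $(E_1,\pi_1)$ and $(E_2,\pi_2)$ are isomorphic over $\P^1$ once $\pi_1(E_1[2])=\pi_2(E_2[2])$; such an isomorphism carries $E_1[2]$ onto $E_2[2]$, hence differs from a group isomorphism by translation by a $2$-torsion point, which preserves the torsion subgroup, and therefore $\pi_1(E_1^{\mathrm{tors}})=\pi_2(E_2^{\mathrm{tors}})$, i.e.\ $\Preper(g_{E_1})=\Preper(g_{E_2})$. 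For ``only if'': $g_{E_i}$ is a Latt\`es map, hence not conjugate to a power map, so Theorem \ref{LPYZ} upgrades $\Preper(g_{E_1})=\Preper(g_{E_2})$ to $\mu_{g_{E_1}}=\mu_{g_{E_2}}$; as recalled in the sketch proof of Theorem \ref{LPYZ}, the measure of a Latt\`es map determines the orbifold structure of its quotient Riemann sphere, whose cone points are exactly the branch points $\pi_i(E_i[2])$, so $\pi_1(E_1[2])=\pi_2(E_2[2])$. Combining: if $\pi_1(E_1[2])\ne\pi_2(E_2[2])$ then $\Preper(g_{E_1})\ne\Preper(g_{E_2})$, so the first alternative of Theorem \ref{Lattes theorem} forces $\bigl|\pi_1(E_1^{\mathrm{tors}})\cap\pi_2(E_2^{\mathrm{tors}})\bigr|\le M$; and if $\pi_1(E_1[2])=\pi_2(E_2[2])$ then $\Preper(g_{E_1})=\Preper(g_{E_2})$ is infinite, ruling out any such bound.

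I do not expect a real obstacle here: the only delicate point is the equivalence just sketched, which is precisely the Latt\`es-map case of Theorem \ref{LPYZ} already assembled in the text, and one small thing to keep track of is that $M=M_4$ is genuinely uniform because $M_4$ is and because conjugation by the parameter-dependent M\"obius map $A$ does not change cardinalities of intersections of preperiodic sets.
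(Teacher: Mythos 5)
Your proposal is correct and follows essentially the same route as the paper: normalize so that one map lies in $\cL$ and the other in $\Rat_4$, apply Theorem \ref{Lattes theorem}, and identify the alternative $\Preper(f)=\Preper(g)$ with $\pi_1(E_1[2])=\pi_2(E_2[2])$. The only difference is that you spell out both directions of that last equivalence (via uniqueness of the double cover and via Theorem \ref{LPYZ} plus the orbifold data of the maximal measure), which the paper merely asserts.
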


\begin{cor}  \label{roots of unity cor}
Let $\mu_{\infty}\subset \C$ denote the set of roots of unity. 
There exists a constant $B>0$ such that 
\begin{align*}
|\pi(E^{\mathrm{tors}})\cap \mu_{\infty}|\le B,
\end{align*}
for every elliptic curve $E$ defined over $\C$ and any degree-2 projection $\pi:E\to \P^1$ ramified at the 2-torsion points of $E$.
\end{cor}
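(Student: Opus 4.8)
The plan is to realize $\mu_\infty$ as (essentially) the preperiodic set of a power map and then to invoke the mixed uniformity statement Theorem \ref{Lattes theorem intro} in degree $d=2$. First I would record the elementary observation that every root of unity is preperiodic for $z\mapsto z^2$, so that $\mu_\infty\subseteq\Preper(z^2)$; indeed $\Preper(z^2)=\mu_\infty\cup\{0,\infty\}$. Consequently $|P\cap\mu_\infty|\le|P\cap\Preper(z^2)|$ for every subset $P\subseteq\P^1(\C)$, and it suffices to bound $|\pi(E^{\mathrm{tors}})\cap\Preper(z^2)|$ uniformly in $E$ and $\pi$.

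Next I would put $\pi$ into standard form. A degree-$2$ morphism $\pi\colon E\to\P^1$ ramified at $E[2]$ has deck transformation equal to $[-1]$, the only involutive automorphism of $E$ fixing $E[2]$ pointwise; hence the fibers of $\pi$ coincide with those of the $x$-coordinate map $x\colon E\to\P^1$ of any fixed Weierstrass model of $E$, and so $\pi=M\circ x$ for some $M\in\Aut\P^1$. Thus $\pi(E^{\mathrm{tors}})=M\bigl(x(E_{\mathrm{tors}})\bigr)$, where $x(E_{\mathrm{tors}})$ denotes the $x$-coordinates of the torsion of that Weierstrass model. Setting $g:=M^{-1}\circ(z\mapsto z^2)\circ M\in\Rat_2(\C)$, so that $\Preper(g)=M^{-1}\bigl(\Preper(z^2)\bigr)$, the two observations above combine to give
$$|\pi(E^{\mathrm{tors}})\cap\mu_\infty|\;\le\;\bigl|\pi(E^{\mathrm{tors}})\cap\Preper(z^2)\bigr|\;=\;\bigl|x(E_{\mathrm{tors}})\cap\Preper(g)\bigr|.$$

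I would then apply Theorem \ref{Lattes theorem intro} to the elliptic curve $E$ and the degree-$2$ map $g$: either $|x(E_{\mathrm{tors}})\cap\Preper(g)|\le M_2$, or $x(E_{\mathrm{tors}})=\Preper(g)$. The second case is impossible, since the torsion of $E$ is dense in $E(\C)$ and hence $x(E_{\mathrm{tors}})$ is dense in $\P^1(\C)$, whereas $\Preper(g)=M^{-1}\bigl(\mu_\infty\cup\{0,\infty\}\bigr)$ is nowhere dense --- it lies in a topological circle together with two points. (Alternatively, $x(E_{\mathrm{tors}})=\Preper(h)$ for the degree-$4$ flexible Latt\`es map $h$ of the Weierstrass model, so $x(E_{\mathrm{tors}})=\Preper(g)$ would force $\mu_h=\mu_g$ by Theorem \ref{LPYZ}, and then the measure-rigidity of power maps recorded in its proof would make $h$ conjugate to a power map, a contradiction.) Therefore $|\pi(E^{\mathrm{tors}})\cap\mu_\infty|\le M_2$ for all $E$ and all such $\pi$, and one may take $B=M_2$.

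The real content is already in Theorem \ref{Lattes theorem intro} --- specifically in the version allowing the second map to be arbitrary rather than Latt\`es, here applied with that map taken to be a power map --- so there is no genuine obstacle. The only steps beyond direct citation are the identification $\pi=M\circ x$ (a standard feature of degree-$2$ maps ramified at the $2$-torsion) and the elementary exclusion of the degenerate case; recording the corollary is meant mainly to illustrate that Theorem \ref{Lattes theorem intro} reaches well beyond the Bogomolov--Fu--Tschinkel conjecture of Corollary \ref{Bft:cor}.
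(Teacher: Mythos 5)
Your proof is correct and follows essentially the same route as the paper: both reduce to the mixed uniformity statement (Theorem \ref{Lattes theorem intro}, restated as Theorem \ref{Lattes theorem}) by conjugating so that $\mu_\infty$ sits inside the preperiodic set of a M\"obius conjugate of $z\mapsto z^2$, and both rule out the equality case $\Preper(f)=\Preper(g)$ on topological grounds (the paper compares Julia sets, you compare density of $x(E_{\mathrm{tors}})$ with the nowhere-density of $\Preper(g)$ --- the same observation). The only cosmetic difference is that the paper conjugates the Latt\`es side into the Legendre family and lets the power map range over its $\Aut\P^1$-orbit, while you fix a Weierstrass $x$-coordinate and conjugate the power map; your explicit justification that $\pi=M\circ x$ is a correct filling-in of a step the paper leaves implicit.
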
  

\subsection{Non-isotriviality} \label{ni}
Fix a degree $d \geq 2$.  Let $\cL(d) := \cL \times \Rat_d$.  Consider the map $\cL(d) \to \Rat_{d^2} \times \Rat_{d^2}$ which sends a pair $(f_t, g)$ to the pair $(f_{d,t}, g^2)$, where $f_{d,t}$ is the quotient of the multiplication-by-$d$ endomorphism on the elliptic curve $E_t$ and $g^2$ is the second iterate of $g$.  Note that $\Preper(f_t) = \Preper(f_{d,t})$ for all $t\in \C\setminus\{0,1\}$ and $\Preper(g) = \Preper(g^2)$.

\begin{prop}  \label{nonisotrivial L}
The induced map $\cL(d) \to (\Rat_{d^2} \times \Rat_{d^2})/\Aut\P^1$ to the moduli space of pairs is finite-to-one, so $S = \cL(d)$ parametrizes a maximally non-isotrivial family of pairs of maps of degree $d^2$.  
\end{prop}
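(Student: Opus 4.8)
The goal is to prove that the morphism
$$\Psi\colon \cL(d)\longrightarrow (\Rat_{d^2}\times\Rat_{d^2})/\Aut\P^1,\qquad (t,g)\longmapsto \big[\,(f_{d,t},\,g^2)\,\big],$$
has finite fibres. I would fix a point $P$ in the target, pick a representative $(F_0,G_0)\in \Rat_{d^2}\times\Rat_{d^2}$ of it, and analyse an arbitrary preimage $(t,g)\in\cL(d)$: by definition there is $A\in\Aut\P^1$ with $A f_{d,t}A^{-1}=F_0$ and $A g^2A^{-1}=G_0$. The plan is to bound, in turn, the number of possible values of $t$, then of $A$ given $t$, then of $g$ given $(t,A)$; as each is finite, $\Psi^{-1}(P)$ is finite, which is exactly the assertion.

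The parameter $t$ is controlled by the Latt\`es structure of $f_{d,t}$. The equation $Af_{d,t}A^{-1}=F_0$ forces $[f_{d,t}]=[F_0]$ in $\M_{d^2}=\Rat_{d^2}/\Aut\P^1$, and, as recalled in the sketch of Theorem \ref{LPYZ}, the conjugacy class of a Latt\`es map is remembered by the orbifold structure of its quotient --- for $f_{d,t}$ this is $\P^1$ with the four branch points $\{0,1,t,\infty\}$, each of ramification index $2$ (the images of $E_t[2]$ under $(x,y)\mapsto x$). Thus $A$ must carry $\{0,1,t,\infty\}$ onto the fixed branch locus of $F_0$; comparing cross-ratios, $t$ is then one of at most six values determined by $F_0$ (equivalently, $[f_{d,t}]$ determines $j(E_t)$ and $t\mapsto j(E_t)$ is finite-to-one on the Legendre $\lambda$-line). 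Let $T(P)$ be this finite set. For each fixed $t\in T(P)$, the set $\{A:\,Af_{d,t}A^{-1}=F_0\}$ is either empty or a coset of $\Aut(F_0)$, and $\Aut(F_0)$ is a finite subgroup of $\PGL_2(\C)$: for $N$ large it acts faithfully on the finite set, of cardinality $>2$, of periodic points of $F_0$ of exact period $N$, and an element of $\PGL_2(\C)$ fixing three points is the identity. Hence only finitely many $A$ occur.

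It remains, for fixed $(t,A)$, to bound the number of $g\in\Rat_d$ with $g^2=A^{-1}G_0A$; that is, to show that a given $H\in\Rat_{d^2}$ has only finitely many square roots in $\Rat_d$. The assignment $g\mapsto g^2$ is a morphism $\Rat_d\to\Rat_{d^2}$ (composition, and $\deg(g\circ g)=d^2$ when $\deg g=d$), so if some fibre were infinite it would contain an algebraic curve $\{g_b\}_{b\in B}$ with $g_b^2=H$ for all $b$. Then $J(g_b)=J(g_b^2)=J(H)$ is independent of $b$, so this is a stable algebraic family, and by McMullen's theorem \cite{McMullen:families} it is either isotrivial or a family of flexible Latt\`es maps with varying $j$-invariant. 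In the latter case $H=g_b^2$ is the Latt\`es quotient of the multiplication-by-$(\deg g_b)^2$ map on $E_b$, and such a map determines $E_b$ up to isomorphism, contradicting the variation of $j$. In the isotrivial case, writing $g_b=A_bg_0A_b^{-1}$ for a fixed $g_0$, the identity $g_b^2=H$ says $A_b$ conjugates $H$ to the fixed map $g_0^2$, so $A_b$ lies in a single coset of the finite group $\Aut(g_0^2)$ and $\{g_b\}$ takes finitely many values --- again a contradiction. So $g\mapsto g^2$ is finite-to-one, and combining the three bounds shows $\Psi$ is finite-to-one. Since $\cL(d)=\big(\C\setminus\{0,1\}\big)\times\Rat_d$ is smooth and irreducible and $\deg f_{d,t}=\deg g^2=d^2$, this is precisely the statement that $S=\cL(d)$ parametrizes a maximally non-isotrivial family of pairs of degree $d^2$.

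The step I expect to demand the most care is the last one: ruling out a positive-dimensional family of square roots of a fixed map $H$. The reduction to a family with constant Julia set --- hence to a stable family --- is immediate, but one then has to invoke McMullen's dichotomy and exclude both alternatives, and the exclusion of the flexible-Latt\`es alternative rests once more on the principle used to bound $t$, namely that the dynamics of a Latt\`es map remembers its underlying elliptic curve.
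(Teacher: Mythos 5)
Your proposal is correct, and the bookkeeping (finitely many $t$ via the cross-ratio of the branch locus $\{0,1,t,\infty\}$, then finitely many conjugators $A$ via finiteness of $\Aut(F_0)$, then finitely many square roots $g$) cleanly assembles the two finiteness inputs that the paper's two-line proof leaves implicit. Where you genuinely diverge from the paper is in the key step you yourself flag as delicate: the finiteness of the iteration map $g \mapsto g^2$ from $\Rat_d$ to $\Rat_{d^2}$. The paper simply cites \cite[Corollary 0.3]{D:measures}, which establishes that this morphism is proper (hence finite) between affine varieties by an algebro-geometric/analytic argument about degenerations at the boundary of $\Rat_d$. You instead argue dynamically: an infinite fibre would contain an algebraic curve of maps $g_b$ with $g_b^2 = H$, hence with constant Julia set $J(g_b)=J(H)$; by the Ma\~n\'e--Sad--Sullivan/Lyubich characterization of $J$-stability via Hausdorff continuity of $\lambda\mapsto J(f_\lambda)$, such a family is stable, and McMullen's dichotomy then forces it to be isotrivial or flexible Latt\`es, both of which you correctly exclude (the Latt\`es branch by the same orbifold rigidity you use to pin down $t$, the isotrivial branch by finiteness of $\Aut(g_0^2)$). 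This buys a self-contained proof that stays within the dynamical toolkit already developed in the paper, at the cost of invoking McMullen's theorem for what is, via the properness route, a softer algebraic fact; the citation route is shorter and also yields the stronger statement that iteration is a finite (not merely finite-to-one) morphism.
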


\begin{proof}
Two maps $f_{d, t_1}$ and $f_{d,t_2}$ are conjugate if and only if the elliptic curves $E_{t_1}$ and $E_{t_2}$ are isomorphic.  Moreover, the map $\Rat_d \to \Rat_{d^2}$ defined by iteration is finite, because it is proper between affine varieties; see, e.g., \cite[Corollary 0.3]{D:measures}. 
\end{proof}

\subsection{Non-degeneracy}  
Fix a degree $d \geq 2$.  Let $\cL(d) := \cL \times \Rat_d$ parameterize the family of pairs of degree $d^2$ as in \S\ref{ni}.  

\begin{prop}  \label{nd prop}
Let $S \to \cL(d)$ be a finite map from a smooth, irreducible quasiprojective algebraic variety $S$ of dimension $m \geq 1$, defined over $\C$, and let $\Phi = (f,g)$ be the associated algebraic family of pairs of degree $d^2$ over $S$.  Then either there exists a rigid $m$-repeller at some point $s_0 \in S(\C)$ or $\Preper(f_s) = \Preper(g_s)$ for all $s \in S(\C)$.
\end{prop}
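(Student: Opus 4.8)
The plan is the following. If $\Preper(f_s)=\Preper(g_s)$ for all $s\in S(\C)$ we are in the second alternative and done; otherwise $\Sigma:=\{s\in S(\C):\Preper(f_s)\neq\Preper(g_s)\}$ is nonempty, and I would construct a rigid $m$-repeller by iterating the marked-point stability theorem (Theorem~\ref{stable point}) exactly along the lines of the proof of Theorem~\ref{Zariski dense}, while carrying along two extra bookkeeping features: that the marked points land on \emph{repelling} cycles for $g$ (they automatically do for $f$, since $f_s$ is a Latt\`es map and all its cycles are repelling), and that the base points stay inside $\Sigma$.

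Before the construction I would record the inputs. First, $\Sigma$ is Zariski open and dense: by Theorem~\ref{LPYZ}, since $f_s$ is never a power map, $\Preper(f_s)=\Preper(g_s)$ exactly when $\mu_{f_s}=\mu_{g_s}$, and (as recalled in the proof sketch of Theorem~\ref{LPYZ}) this holds exactly when $g_s$ is a Latt\`es map for $E_{t(s)}$ via the standard projection --- a Zariski-closed condition on $s$ --- while $\Sigma\neq\emptyset$ by assumption. Second, since $S\to\cL(d)$ and $\cL(d)\to(\Rat_{d^2}\times\Rat_{d^2})/\Aut\P^1$ are finite (Proposition~\ref{nonisotrivial L}), $\Phi$ restricts non-isotrivially to every positive-dimensional irreducible subvariety of $S$. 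Third, the Latt\`es factor $f$ is $J$-stable, so over a finite cover of $S$ its preperiodic points are cut out by holomorphic ``torsion sections'' $a\colon S\to\P^1$ that are persistently preperiodic for $f$; there are infinitely many of them, and only finitely many are constant on a fixed positive-dimensional subvariety (a non-isotrivial flexible Latt\`es family has only finitely many persistent preperiodic points). Passing to such a cover affects neither alternative of the proposition.

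The core is an induction producing, for $j=0,1,\dots,m$, a smooth irreducible quasiprojective $S_j\subseteq S$ of dimension $m-j$ with $S_j\cap\Sigma\neq\emptyset$, a point $s_j\in S_j\cap\Sigma$, and pairwise-distinct torsion sections $a_1,\dots,a_j$ of $f$, each persistently preperiodic for $f$ and persistently preperiodic for $g$ along $S_j$, such that at $s_j$ each $a_i(s_j)$ is preperiodic to a repelling cycle of both $f_{s_j}$ and $g_{s_j}$, and the germ at $(s_j,a_1(s_j),\dots,a_j(s_j))$ of the locus of common preperiodic $j$-tuples in $S\times(\P^1)^j$ has dimension $m-j$. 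One starts with $S_0=S$. For the step $j\to j+1$ with $j<m$: $\Phi|_{S_j}$ is non-isotrivial, so by the third input all but finitely many torsion sections $a$ give a pair $(g,a)$ which is non-isotrivial over $\Sigma\cap S_j$; if all of these were persistently preperiodic for $g$ there, then $\Preper(f_s)\cap\Preper(g_s)$ would be infinite for every $s\in\Sigma\cap S_j$, forcing $\Preper(f_s)=\Preper(g_s)$ by Theorem~\ref{LPYZ} and contradicting $s_j\in\Sigma$; hence some $a_{j+1}$ is non-isotrivial and \emph{not} persistently preperiodic for $g$ over $\Sigma\cap S_j$, and I would further arrange $a_{j+1}$ to be active for $g$ at $s_j$. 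Theorem~\ref{stable point} --- through the Montel argument in its proof, which yields repelling-preperiodic parameters densely in the bifurcation locus --- then provides $s_{j+1}\in\Sigma\cap S_j$ arbitrarily close to $s_j$ at which $a_{j+1}(s_{j+1})$ is preperiodic to a repelling cycle of $g_{s_{j+1}}$; taking $s_{j+1}$ close enough to $s_j$ keeps $a_1,\dots,a_j$ preperiodic to repelling cycles at $s_{j+1}$ as well, by the openness of the repelling condition and the persistence of the landing relations. Near $s_{j+1}$ this repelling cycle persists holomorphically, so the locus in $S_j$ where $a_{j+1}$ lands on it is the zero set of one holomorphic function, not identically zero since $S_j$ is irreducible and $a_{j+1}$ is not persistently preperiodic for $g$, hence of pure codimension one; a component of it through $s_{j+1}$, intersected with $\Sigma$ and replaced by its regular locus (moving $s_{j+1}$ slightly if necessary), is the desired $S_{j+1}$, and the germ-dimension statement persists because each new section drops the codimension by exactly one.

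At $j=m$ this produces a point $s_m\in\Sigma$ and distinct points $x_i=a_i(s_m)\in\Preper(f_{s_m})\cap\Preper(g_{s_m})$, each preperiodic to a repelling cycle for both $f_{s_m}$ and $g_{s_m}$, with $(s_m,x_1,\dots,x_m)$ isolated in the locus of common preperiodic $m$-tuples --- that is, a rigid $m$-repeller, which is the first alternative. I expect the main obstacle to be precisely this inductive step: keeping the dimension drop equal to one at every stage (as in the proof of Theorem~\ref{Zariski dense}, but now simultaneously keeping the chosen parameters in $\Sigma$ and close enough to the previous one that every earlier marked point remains preperiodic to a repelling cycle of $g$), and in particular justifying that an ``active'' torsion section exists at each $s_j$ --- the point being that if every torsion section were passive for $g$ at $s_j$ then, by Zdunik's theorem and a normality argument, $g_s$ would be a Latt\`es map for $E_{t(s)}$ near $s_j$, contradicting $s_j\in\Sigma$. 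The one subsidiary point is the Zariski-closedness of the ``equal preperiodic points'' locus used to set up $\Sigma$.
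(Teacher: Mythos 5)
Your overall strategy---induction on the dimension of the base, at each stage producing a new torsion section of $f$ that becomes pre-repelling for $g$ and cutting the base down by one---is the same as the paper's, and much of the bookkeeping matches (maximal non-isotriviality via Proposition \ref{nonisotrivial L}, the interpolation argument ruling out ``all pairs $(g,a)$ isotrivial,'' and Theorem \ref{LPYZ} ruling out ``all torsion sections persistently preperiodic for $g$''). The gap is in the step you yourself flag as the main obstacle: producing the $(j+1)$-st common pre-repelling point at a parameter where the first $j$ points remain pre-repelling for $g$. You propose to find a torsion section of $f$ that is \emph{active} for $g$ at the specific point $s_j$ and then run the Montel argument locally, and you justify the existence of such a section by asserting that if every torsion section of $f$ were passive for $g$ at $s_j$, then ``by Zdunik's theorem and a normality argument'' $g_s$ would be Latt\`es near $s_j$. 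This implication is not established and does not follow from \cite{Zdunik}: passivity of the (countably many, dense) torsion sections at $s_j$ is a normality statement in the parameter variable and gives no control on the maximal measure $\mu_{g_{s_j}}$ or its absolute continuity, which is what Zdunik's theorem characterizes. Moreover no purely local implication of the form ``all marked points passive near $s_j$ implies $g$ Latt\`es near $s_j$'' can hold: stability of $g$ is an open dense condition on the parameter while the Latt\`es locus is a proper subvariety, so any such conclusion must be global in nature.

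The paper fills exactly this gap with a different mechanism. It first isolates the case where $g$ is Latt\`es on all of $S$ (then every cycle of $g$ is automatically repelling and no localization is needed), and otherwise shrinks $S$ so that $g$ is never Latt\`es. In the inductive step it does \emph{not} seek an active section at $s_j$: it takes a neighborhood $U$ of $s_j$ in $S_j$ on which the previously constructed cycles of $g$ stay repelling, uses the density of stability to pass to an open $U'\subset U$ on which $g$ is $J$-stable, and then compares the holomorphic motion of the pre-repelling points of $g$ over $U'$ with the holomorphic motion of the preperiodic points of $f$, which are dense in $J(f_s)=\P^1$ because $f$ is Latt\`es. Either some pre-repelling point of $g$ meets a preperiodic point of $f$ in a codimension-one locus inside $U'$---which is the next step of the induction---or every pre-repelling point of $g$ follows a leaf of the motion of $J(f)=\P^1$; in the latter case analytic continuation propagates the shared motion to all of $S_j$, so the algebraic family $g$ is stable on $S_j$, and McMullen's theorem \cite{McMullen:families} forces $g$ to be Latt\`es there, a contradiction. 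This argument is irreducibly global (it needs the algebraicity of the family and the classification of stable algebraic families), which is precisely what your local ``all-passive implies Latt\`es'' claim is missing; to make your version work you would have to prove that at every $s_j\in\Sigma$ some torsion section of $f$ is active for $g$, and the only available route is essentially the stability/holomorphic-motion/McMullen argument above.
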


\begin{proof}

First note that $\Phi = (f,g)$ is maximally non-isotrivial, because the map to $\cL(d)$ is finite.  Let $m = \dim S$.  Assume that we do not have $\Preper(f_s) = \Preper(g_s)$ for all $s \in S(\C)$.  To show the existence of a rigid $m$-repeller at a parameter $s_0 \in S(\C)$, we repeat the arguments in the proof of Theorem \ref{density result} to build common preperiodic points. While Theorem \ref{density result} shows there are at least $m$ common preperiodic points for a Zariski dense set of pairs $(f_s, g_s)$ in $S(\C)$, it does not give control over whether they are repellers nor whether they will be rigid.  We use the fact that $f$ is Latt\`es to provide this.

For simplicity, we first give the proof assuming that both $f$ and $g$ are Latt\`es maps throughout $S$.  In particular, the periodic points of $f_s$ and $g_s$ are all repelling, for all $s \in S(\C)$.  Let $P_1$ denote a hypersurface in $S \times \P^1$ parameterizing a periodic point for $f_s$, chosen so that it is not persistently preperiodic for $g$.  If no such hypersurface exists, then we deduce that $\Preper(f) = \Preper(g)$ throughout $S$ (as a consequence of Theorem \ref{LPYZ}) and we are done.  So we assume that we  have such a $P_1$.  As in the proof of Theorem \ref{density result}, we will apply Theorem \ref{stable point} to the pair $(g, P_1)$ over a branched cover $S' \to S$ where $P_1$ may be viewed as the graph of a point in $S'\times\P^1$.  If the pair $(g,P_1)$ is isotrivial over $S'$, then we replace $P_1$ with another periodic point for $f$.  If the pair $(g,P_1)$ is isotrivial for all periodic points $P_1$ of a given large period $>2d^2$, then interpolation (as in the proof of Theorem \ref{density result}) implies that the pair $(f,g)$ must be isotrivial, contradicting our assumption.  We conclude from Theorem \ref{stable point} that there exists a parameter $s_1 \in S'(\C)$ at which $P_1$ is preperiodic to a repelling point for $g_{s_1}$.  We then let $P_1' \subset P_1$ be the codimension 1 subvariety containing $(s_1, P_1(s_1))$ along which both $f$ and $g$ are persistently preperiodic.  Then $P_1'$ projects to a subvariety $S_1 \subset S'$ of codimension 1.   We now repeat the argument with another periodic point $P_2$ for $f$ over $S_1$, distinct from $P_1'$.  We continue inductively, using the fact that $(f,g)$ is maximally non-isotrivial, to find $m$ distinct common preperiodic points at some parameter $s_0\in S(\C)$ that form a rigid $m$-repeller.

Now we assume that $g$ is not everywhere Latt\`es.  As the Latt\`es pairs $(f,g)$ form a proper subvariety of $S$, we replace $S$ with a Zariski open subset so that $g$ is not a Latt\`es map for any $s \in S(\C)$.  

Again let $m = \dim S$.  Let $P_1$ denote a hypersurface in $S \times \P^1$ parameterizing a periodic point for $f$, chosen so that it is not persistently preperiodic for $g$.  If no such curve exists, then, as above, we have $\Preper(f) = \Preper(g)$ and we are done.  So, we may assume that $P_1$ exists.  Again as in the proof of Theorem \ref{density result}, we pass to a branched cover $S'\to S$ so that $P_1$ may be viewed as the graph of a point in $S'\times \P^1$.  If the pair $(g, P_1)$ is isotrivial, and if this holds for all choices of $P_1$, then the pair $(f,g)$ is isotrivial by an interpolation argument, and we have a contradiction.  So from Theorem \ref{stable point}, there exists a parameter $s_1\in S'(\C)$ where the point $P_1$ is preperiodic to a repelling cycle for $g$, but not persistently so.  As above, we let $P_1' \subset P_1$ be the codimension 1 subvariety containing $(s_1, P_1(s_1))$ along which both $f$ and $g$ are persistently preperiodic.  Then $P_1'$ projects to a subvariety $S_1 \subset S'$ of codimension 1.

Since $P_1'$ is preperiodic to a repelling cycle for $g_{s_1}$, there is an open neighborhood $U_1$ of $s_1$ in $S_1$ on which that cycle remains repelling for $g_s$.  The density of stability implies that we can find an open $U_1' \subset U_1$ on which the family $g_s$ is stable for $s \in U_1'$, so its Julia set (and in particular, including all repelling periodic points) is moving holomorphically.  As the family $f$ is stable over all of $S_1$, we also have a holomorphic motion of its preperiodic points, and these are dense in $\P^1$ over every $s\in S_1$.  Thus there are only two possibilities:  either there is a codimension-1 intersection of one of the pre-repelling points of $g$ with a preperiodic point $\not= P_1'$ of $f$ at some parameter in $U_1'$, or each of the preperiodic points of $g$ in its Julia set becomes a leaf of the holomorphic motion of the Julia set of $f$.  In the latter case, by analytic continuation, this shared holomorphic motion must persist over all of $S_1$.  But then the algebraic family of maps $g$ must itself be stable on all of $S_1$, as there would be no collisions between the distinct periodic points; see \S\ref{stable subsection}.  It follows from McMullen's theorem \cite{McMullen:families} that $g$ is also a family of Latt\`es maps, which is a contradiction.  Thus we can find a parameter $s_2 \in U_1'$ so that preperiodic points of $f$ and $g$ in their Julia sets intersect in a codimension 1 subvariety of $U_1' \times \P^1$.  

The proof is completed by induction on dimension.
\end{proof}

\subsection{Proof of Theorem \ref{Lattes theorem} and its corollaries}  

\begin{proof}[Proof of Theorem \ref{Lattes theorem}]
Fix a degree $d \geq 2$.  As in \S\ref{ni}, we let $\cL(d) = \cL \times \Rat_d$ and let $\Phi = (f,g)$ denote the algebraic family of maps of degree $d^2$ parameterized by $\cL(d)$.  This $\Phi$ is maximally non-isotrivial, by Proposition \ref{nonisotrivial L}.  Note that $\dim \cL(d) = 2d+2$.  

Since there exist $g \in \Rat_d$ with Julia sets $J(g) \not= \P^1$, we do not have $\Preper(f_s) = \Preper(g_s)$ for all $s \in \cL(d)$.  Thus, from Proposition \ref{nd prop}, there exists a rigid $(2d+2)$-repeller at some parameter $s_0 \in \cL(d)$.  It follows from Corollary \ref{our measure} that the pairwise-bifurcation measure $\mu_{\Phi, \Delta}$ is nonzero on $\cL(d)$.  Then from Theorem \ref{uniform bound}, there exists a Zariski-closed subvariety $V_1 \subset \cL(d)$ of codimension 1, defined over $\Qbar$, and a constant $M_1$ so that 
	$$\# \Preper(f_s) \cap \Preper(g_s) \leq M_1$$
for all $s \in (\cL(d) \setminus V_1)(\C)$.  

We then repeat these arguments on each irreducible component $V_1'$ of $V_1$.  Either $\Preper(f_s) = \Preper(g_s)$ for all $s \in V_1'(\C)$ or there is a subvariety $V_2' \subset V_1'$ of codimension 1, defined over $\Qbar$, and a constant $M_2$ so that $\# \Preper(f_s) \cap \Preper(g_s) \leq M_2$ for all $s \in V_1' \setminus V_2')(\C)$.  We let $V_2$ be the union over all of the $V_2'$.  It has codimension 2 in $\cL(d)$. Induction on dimension completes the proof.
\end{proof}

\begin{proof}[Proof of Corollary \ref{Bft:cor}]
We apply Theorem \ref{Lattes theorem} to the algebraic family of pairs $\Phi = (f,g)$ for $f \in \cL$ and $g$ the family of all conjugates of maps in $\cL$.  More precisely, we consider the subvariety $V \subset \Rat_4$ of all maps that are conjugate to elements of $\cL$ by M\"obius transformations, and let $S = \cL \times V$.  Theorem \ref{Lattes theorem} then implies that there is a constant $M$ so that either
	$$|\Preper(f_s) \cap \Preper(g_s)| \leq M \quad\mbox{or} \quad \Preper(f_s) = \Preper(g_s)$$
for all $s \in S(\C)$.  For any pair of elliptic curves $E_1$ and $E_2$ over $\C$, equipped with their degree-two projections $\pi_i: E_i\to \P^1$, there exists a M\"obius transformation $A \in \Aut\P^1$ and an $s \in S(\C)$ so that 
	$$A(\pi_1(E_1^{\mathrm{tors}})) = \Preper(f_s) \quad \mbox{and} \quad A(\pi_2(E_2^{\mathrm{tors}})) = \Preper(g_s).$$
Observing also that $\pi_1(E_1[2]) = \pi_2(E_2[2])$ if and only if $\pi_1(E_1^{\mathrm{tors}}) = \pi_2(E_2^{\mathrm{tors}})$ if and only $\Preper(f_s) = \Preper(g_s)$, the proof is complete.
\end{proof}

\begin{proof}[Proof of Corollary \ref{roots of unity cor}]
We apply Theorem \ref{Lattes theorem} to the algebraic family of pairs $\Phi = (f,g)$ for $f \in \cL$ and $g$ the family of all conjugates of the map $g_0(z) = z^2$.  More precisely, let $V \subset \Rat_2$ be the $\Aut\P^1$-orbit of $g_0$.  Note that $\Preper(g_0) \supset \mu_\infty$, the set of all roots of unity.  Let $S = \cL \times V$.  For any elliptic curve $E$ over $\C$, equipped with its degree-two projection $\pi: E \to \P^1$, there exists a M\"obius transformation $A \in \Aut\P^1$ and an $s \in S(\C)$ so that 
	$$A(\pi(E^{\mathrm{tors}})) = \Preper(f_s) \quad \mbox{and} \quad A(\mu_\infty) \subset \Preper(g_s).$$
Note that we cannot have $\Preper(f_s) = \Preper(g_s)$ for any $s \in S(\C)$, because the Julia set of $g_s$ is a circle while $J(f_s) = \P^1$.  We apply Theorem \ref{Lattes theorem} to complete the proof.
\end{proof}


\bigskip \bigskip

\begin{thebibliography}{DGH2}

\bibitem[BaEr]{Baker:Eremenko}
I.~N. Baker and A.~Er\"{e}menko.
\newblock {A problem on {J}ulia sets}.
\newblock {\em Ann. Acad. Sci. Fenn. Ser. A I Math.} {\bf 12}(1987), 229--236.

\bibitem[BD]{BD:preperiodic}
Matthew Baker and Laura DeMarco.
\newblock {Preperiodic points and unlikely intersections}.
\newblock {\em Duke Math. J.} {\bf 159}(2011), 1--29.

\bibitem[BB]{Bassanelli:Berteloot}
Giovanni Bassanelli and Fran\c{c}ois Berteloot.
\newblock {Bifurcation currents in holomorphic dynamics on {$\Bbb P^k$}}.
\newblock {\em J. Reine Angew. Math.} {\bf 608}(2007), 201--235.

\bibitem[BBD]{Berteloot:Bianchi:Dupont}
Fran\c{c}ois Berteloot, Fabrizio Bianchi, and Christophe Dupont.
\newblock {Dynamical stability and {L}yapunov exponents for holomorphic
  endomorphisms of {$\Bbb P^k$}}.
\newblock {\em Ann. Sci. \'{E}c. Norm. Sup\'{e}r. (4)} {\bf 51}(2018),
  215--262.

\bibitem[BFT]{BFT:torsion}
Fedor Bogomolov, Hang Fu, and Yuri Tschinkel.
\newblock {Torsion of elliptic curves and unlikely intersections}.
\newblock In {\em Geometry and physics. {V}ol. {I}}, pages 19--37. Oxford Univ.
  Press, Oxford, 2018.

\bibitem[BuEp]{Buff:Epstein:PCF}
Xavier Buff and Adam Epstein.
\newblock {Bifurcation measure and postcritically finite rational maps}.
\newblock In {\em Complex dynamics}, pages 491--512. A K Peters, Wellesley, MA,
  2009.

\bibitem[CS]{Call:Silverman}
G.~S. Call and J.~H. Silverman.
\newblock {Canonical heights on varieties with morphisms}.
\newblock {\em Compositio Math.} {\bf 89}(1993), 163--205.

\bibitem[De1]{D:current}
Laura DeMarco.
\newblock {Dynamics of rational maps: a current on the bifurcation locus}.
\newblock {\em Math. Res. Lett.} {\bf 8}(2001), 57--66.

\bibitem[De2]{D:lyap}
Laura DeMarco.
\newblock {Dynamics of rational maps: {L}yapunov exponents, bifurcations, and
  capacity}.
\newblock {\em Math. Ann.} {\bf 326}(2003), 43--73.

\bibitem[De3]{D:measures}
Laura DeMarco.
\newblock {Iteration at the boundary of the space of rational maps}.
\newblock {\em Duke Math. Journal.} {\bf 130}(2005), 169--197.

\bibitem[De4]{D:stableheight}
Laura DeMarco.
\newblock {Bifurcations, intersections, and heights}.
\newblock {\em Algebra Number Theory.} {\bf 10}(2016), 1031--1056.

\bibitem[DKY1]{DKY:UMM}
Laura DeMarco, Holly Krieger, and Hexi Ye.
\newblock {Uniform {M}anin-{M}umford for a family of genus 2 curves}.
\newblock {\em Ann. of Math. (2)} {\bf 191}(2020), 949--1001.

\bibitem[DKY2]{DKY:quad}
Laura DeMarco, Holly Krieger, and Hexi Ye.
\newblock {Common preperiodic points for quadratic polynomials}.
\newblock {\em {\em To appear,} Journal of Modern Dynamics}.

\bibitem[DGH1]{DGH1}
Vesselin Dimitrov, Ziyang Gao, and Philipp Habegger.
\newblock {Uniform bound for the number of rational points on a pencil of
  curves}.
\newblock {\em Int. Math. Res. Not. IMRN} (2021), 1138--1159.

\bibitem[DGH2]{DGH:uniformity}
Vesselin Dimitrov, Ziyang Gao, and Philipp Habegger.
\newblock {Uniformity in {M}ordell-{L}ang for curves}.
\newblock {\em Ann. of Math. (2)} {\bf 194}(2021), 237--298.

\bibitem[DH]{Doyle:Hyde}
John Doyle and Trevor Hyde.
\newblock {Polynomials with many rational preperiodic points}.
\newblock {\em Preprint, {\em arXiv:2201.11707 [math.DS] }}.

\bibitem[DF]{Dujardin:Favre:critical}
Romain Dujardin and Charles Favre.
\newblock {Distribution of rational maps with a preperiodic critical point}.
\newblock {\em Amer. J. Math.} {\bf 130}(2008), 979--1032.

\bibitem[FLM]{FLM}
A.~Freire, A.~Lopes, and R.~Ma\~{n}{\'e}.
\newblock {An invariant measure for rational maps}.
\newblock {\em Bol. Soc. Brasil. Mat.} {\bf 14}(1983), 45--62.

\bibitem[Gao1]{Gao:generic}
Ziyang Gao.
\newblock {Generic rank of {B}etti map and unlikely intersections}.
\newblock {\em Compos. Math.} {\bf 156}(2020), 2469--2509.

\bibitem[Gao2]{Gao:MixedAS}
Ziyang Gao.
\newblock {Mixed {A}x-{S}chanuel for the universal abelian varieties and some
  applications}.
\newblock {\em Compos. Math.} {\bf 156}(2020), 2263--2297.

\bibitem[Gao3]{Gao:survey}
Ziyang Gao, Tangli Ge, and Lars K{\"u}hne.
\newblock {Recent developments of the Uniform Mordell-Lang Conjecture}.
\newblock {\em Preprint, {\em arXiv:2104.03431v5 [math.NT]}}.

\bibitem[GGK]{Gao:Ge:Kuhne}
Ziyang Gao, Tangli Ge, and Lars K{\"u}hne.
\newblock {The Uniform Mordell-Lang Conjecture}.
\newblock {\em Preprint, {\em arXiv:2105.15085v2 [math.NT]}}.

\bibitem[GH]{Gao:Habegger}
Ziyang Gao and Philipp Habegger.
\newblock {Heights in families of abelian varieties and the geometric
  {B}ogomolov conjecture}.
\newblock {\em Ann. of Math. (2)} {\bf 189}(2019), 527--604.

\bibitem[Ga]{Gauthier:goodheights}
Thomas Gauthier.
\newblock {Good height functions on quasiprojective varieties: equidistribution
  and applications in dynamics}.
\newblock {\em Preprint, {\em arXiv:2105.02479v3 [math.NT]}}.

\bibitem[GTV]{Gauthier:Vigny:new}
Thomas Gauthier, Johan Taflin, and Gabriel Vigny.
\newblock {Personal communication}.

\bibitem[GV]{Gauthier:Vigny}
Thomas Gauthier and Gabriel Vigny.
\newblock {The geometric dynamical Northcott and Bogomolov properties}.
\newblock {\em Preprint, {\em arXiv:1912.07907v2 [math.DS]}}.

\bibitem[Ha]{Habegger:special}
Philipp Habegger.
\newblock {Special points on fibered powers of elliptic surfaces}.
\newblock {\em J. Reine Angew. Math.} {\bf 685}(2013), 143--179.

\bibitem[Kl]{Klimek}
M.~Klimek.
\newblock {\em Pluripotential Theory}.
\newblock The Clarendon Press Oxford University Press, New York, 1991.
\newblock Oxford Science Publications.

\bibitem[K{\"u}1]{Kuhne:UMM}
Lars K{\"u}hne.
\newblock {Equidistribution in families of abelian varieties and uniformity}.
\newblock {\em Preprint, {\em arXiv:2101.10272v3 [math.NT]}}.

\bibitem[K{\"u}2]{Kuhne:RBC}
Lars K{\"u}hne.
\newblock {The Relative Bogomolov Conjecture for Fibered Products of Elliptic
  Surfaces}.
\newblock {\em Preprint, {\em arXiv:2103.06203 [math.NT]}}.

\bibitem[LP]{Levin:Przytycki}
G.~Levin and F.~Przytycki.
\newblock {When do two rational functions have the same {J}ulia set?}
\newblock {\em Proc. Amer. Math. Soc.} {\bf 125}(1997), 2179--2190.

\bibitem[Ly1]{Lyubich:entropy}
M.~Lyubich.
\newblock {Entropy properties of rational endomorphisms of the {R}iemann
  sphere}.
\newblock {\em Ergodic Theory Dynamical Systems} {\bf 3}(1983), 351--385.

\bibitem[Ly2]{Lyubich:stability}
M.~Yu. Lyubich.
\newblock {Some typical properties of the dynamics of rational mappings}.
\newblock {\em Uspekhi Mat. Nauk} {\bf 38}(1983), 197--198.

\bibitem[Ma]{Mane:unique}
R.~Ma\~{n}{\'e}.
\newblock {On the uniqueness of the maximizing measure for rational maps}.
\newblock {\em Bol. Soc. Brasil. Mat.} {\bf 14}(1983), 27--43.

\bibitem[MSS]{Mane:Sad:Sullivan}
R.~Ma\~{n}{\'e}, P.~Sad, and D.~Sullivan.
\newblock {On the dynamics of rational maps}.
\newblock {\em Ann. Sci. Ec. Norm. Sup.} {\bf 16}(1983), 193--217.

\bibitem[MS]{Mavraki:Schmidt}
Niki~Myrto Mavraki and Harry Schmidt.
\newblock {On the dynamical Bogomolov conjecture for families of split rational
  maps}.
\newblock {\em Preprint, {\em arXiv:2201.10455v3 [math.NT]}}.

\bibitem[Mc1]{McMullen:families}
Curtis~T. McMullen.
\newblock {Families of rational maps and iterative root-finding algorithms}.
\newblock {\em Ann. of Math. (2)} {\bf 125}(1987), 467--493.

\bibitem[Mc2]{McMullen:CDR}
Curtis~T. McMullen.
\newblock {\em Complex Dynamics and Renormalization}.
\newblock Princeton University Press, Princeton, NJ, 1994.

\bibitem[Mi]{Milnor:Lattes}
John Milnor.
\newblock {On {L}att\`es maps}.
\newblock In {\em Dynamics on the {R}iemann sphere}, pages 9--43. Eur. Math.
  Soc., Z\"urich, 2006.

\bibitem[Pa]{Pakovich:general}
Fedor Pakovich.
\newblock {On iterates of rational functions with maximal number of critical
  values}.
\newblock {\em Preprint, {\em arXiv:2107.05963v1 [math.DS]}}.

\bibitem[Po]{Poineau:BFT}
Jerome Poineau.
\newblock {Dynamique analytique sur $\mathbb{Z}$ II : \'Ecart uniforme entre
  Latt\`es et conjecture de Bogomolov-Fu-Tschinkel}.
\newblock {\em Preprint, {\em arXiv:2207.01574 [math.NT]}}.

\bibitem[Ye]{Ye:symmetries}
Hexi Ye.
\newblock {Rational functions with identical measure of maximal entropy}.
\newblock {\em Adv. Math.} {\bf 268}(2015), 373--395.

\bibitem[Yu]{Yuan:uniform}
Xinyi Yuan.
\newblock {Arithmetic bigness and a uniform Bogomolov-type result}.
\newblock {\em Preprint, {\em arXiv:2108.05625v2 [math.NT]}}.

\bibitem[YZ1]{Yuan:Zhang:quasiprojective}
Xinyi Yuan and Shouwu Zhang.
\newblock {Adelic line bundles over quasiprojective varieties}.
\newblock {\em Preprint, {\em arXiv:2105.13587v4 [math.NT]}}.

\bibitem[YZ2]{Yuan:Zhang:II}
Xinyi Yuan and Shouwu Zhang.
\newblock {The arithmetic Hodge index theorem for adelic line bundles II}.
\newblock {\em Preprint, {\em arXiv:1304.3539v2 [math.NT]}}.

\bibitem[Zd]{Zdunik}
Anna Zdunik.
\newblock {Parabolic orbifolds and the dimension of the maximal measure for
  rational maps}.
\newblock {\em Invent. Math.} {\bf 99}(1990), 627--649.

\end{thebibliography}
\def\cprime{$'$}

\bigskip\bigskip

\end{document}